\documentclass[12pt, a4paper]{amsart}

\usepackage[margin=2.5cm, headheight=13pt, headsep=13pt]{geometry}
\usepackage{newtxtext}
\usepackage[subscriptcorrection]{newtxmath}
\usepackage[dvipsnames]{xcolor}
\usepackage{hyperref}
\usepackage[sort]{cleveref}
\hypersetup{
  colorlinks,
  citecolor=blue,
  filecolor=magenta,
  linkcolor=magenta,
  urlcolor=magenta
}

\usepackage[square,numbers]{natbib}
\usepackage{graphicx}
\usepackage{multicol,multirow}
\usepackage{amsmath,amsfonts}
\usepackage{amsthm}
\usepackage{rotating}
\usepackage{appendix}
\usepackage{ifpdf}
\usepackage[T1]{fontenc}

\usepackage{tikz}
\usepackage{enumitem}

\usepackage[subrefformat=parens, labelfont=up]{subcaption}

\usetikzlibrary{decorations.pathreplacing}

\newtheorem{theorem}{Theorem}[section]
\newtheorem{lemma}[theorem]{Lemma}
\newtheorem{definition}[theorem]{Definition}
\newtheorem{proposition}[theorem]{Proposition}
\theoremstyle{definition}

\newtheorem{remark}[theorem]{Remark}
\newtheorem{example}[theorem]{Example}
\newtheorem{problem}[theorem]{Problem}
\newtheorem{corollary}[theorem]{Corollary}
\numberwithin{equation}{section}

\newcommand{\GG}{\mathcal{G}}
\newcommand{\HH}{\mathcal{H}}
\newcommand{\R}{\mathbb{R}}
\newcommand{\Z}{\mathbb{Z}}

\newcommand\independent{\protect\mathpalette{\protect\independenT}{\perp}}
\def\independenT#1#2{\mathrel{\rlap{$#1#2$}\mkern2mu{#1#2}}}

\title{On unirational varieties with poset parameterizations}
\author{Marina Garrote-L\'{o}pez, Nataliia Kushnerchuk and Liam Solus}

\address[M.~Garrote-L\'{o}pez]{ Universitat Pompeu Fabra, Barcelona, Spain}
\email{marina.garrote@upf.edu}

\address[N.~Kushnerchuk]{Aalto University, Espoo, Finland}
\email{nataliia.kushnerchuk@aalto.fi}

\address[L.~Solus]{Department of Mathematics, KTH Royal Institute of Technology, Stockholm, Sweden}
\email{solus@kth.se}
\date{\today}

\keywords{%
  partially ordered set, 
  implicitization, 
  semialgebraic set, 
  unirational variety, 
  toric variety, 
  secant variety, 
  edge polytope, 
  graphic matroid, 
  graphical model, 
  phylogenetic model}

\begin{document}

\begin{abstract}
We use partially ordered sets (posets) to provide a canonical parameterization for the Zariski closure of the image of a semialgebraic set under a rational map whose coordinate functions are polynomials with nonnegative integral coefficients. 
The resulting poset parametrization of such a unirational variety allows us to translate several well-studied problems into combinatorics; e.g. reducing the problems to describing the poset associated to the variety.
These problems include, the implicitization problem from algebraic geometry, the toric reparameterization problem, the computation of the linear span of the variety, and the problem of distinguishing two semialgebraic subsets of the same ambient space. 
The technique applies to instances of these problems in several fields, including algebraic geometry, algebraic combinatorics, statistics and applied algebra.
We demonstrate the technique on examples from each field, including degenerate subvarieties of secant varieties, matroid flat varieties -- which generalize toric varieties of edge polytopes, as well as varieties arising in multivariate data analysis and evolutionary biology.
\end{abstract}

\maketitle

\section{Introduction}

A substantial body of research in fields such as applied algebra, computational algebra, real algebraic geometry and optimization is focused on describing the geometry of sets $\varphi(\Theta)\subseteq \R^m$ that are defined as the image of a semialgebraic set $\Theta\subseteq \mathbb{R}^n$ under a rational map 
\begin{equation}
\label{eqn: rational map}
\varphi:\Theta \to \mathbb R^m; \qquad \theta = (\theta_1,\ldots, \theta_n)\mapsto (f_1(\theta), \ldots, f_m(\theta)) = (x_1,\ldots, x_m)=x,
\end{equation}
for rational functions $f_1,\ldots, f_m\in\mathbb R(\theta_1,\ldots, \theta_n)$ defined on $\Theta$. 
The set $\varphi(\Theta)$ often arises from a problem in applied mathematics, where an understanding of the polynomial constraints defining $\varphi(\Theta)$ can be used to develop methodology for the specific application.
For example, $\varphi(\Theta)\subseteq \mathbb{R}^m$ may be a set of points corresponding to distributions in a statistical model defined on the parameter space $\Theta$ \cite{drton2018algebraic, sullivant2023algebraic}, or it may be the feasible region of an optimization problem \cite{blekherman2012semidefinite, gillis2020nonnegative}. 

It is common that the set $\varphi(\Theta)$ is defined with respect to an underlying combinatorial model, such as a graph, matroid or simplicial complex \cite{sullivant2023algebraic}. 
Consequently, understanding the geometry of $\varphi(\Theta)$ amounts to analyzing the resulting interplay between combinatorics and algebraic geometry. 
Embracing this perspective, this paper presents some general techniques for describing semialgebraic sets $\varphi(\Theta)$ where the coordinate functions $f_i$ of the map $\varphi$ are polynomials with nonnegative integral coefficients. 

The restriction to $f_1,\ldots, f_m\in\mathbb Z_{\geq 0}[\theta_1,\ldots, \theta_n]$ is particularly interesting when the coordinate functions are polynomials enumerating combinatorial objects indexed by the monomials $\theta^\alpha = \theta_1^{\alpha_1}\cdots \theta_n^{\alpha_n}$ supporting $f_i$.
When a monomial $\theta^\alpha$ has nonzero coefficient in multiple $f_i$, this induces relations on the generating polynomials $f_1,\ldots, f_m$ that are captured by the algebraic geometry of $\varphi(\Theta)$. 
This occurs, for instance, for unirational varieties associated to matroids, families of degenerate subvarieties of secant varieties and  statistical models for multivariate data analysis (see Section~\ref{section:prelim}). 

In many cases, $\varphi(\Theta)$ is equal to the intersection of an irreducible algebraic variety $\mathcal{V}_\varphi$ of the same dimension as $\varphi(\Theta)$ with a full-dimensional semialgebraic set $\mathcal{K}\subseteq \mathbb R^m$ whose defining inequalities are known. 
Hence, a major goal is to describe the \emph{vanishing ideal} 
\begin{equation}
        \label{eqn: vanishing ideal}
        I_{\varphi} = \{f\in \R[x_1,\ldots, x_m] : f(x) = 0 \textrm{ for all } x\in\varphi(\Theta)\}.
    \end{equation}
of $\varphi(\Theta)$, which is equal to $\ker(\varphi^\ast)\subseteq \mathbb R[x_1,\ldots, x_m]$, where $\varphi^\ast$ is the algebraic pullback of the map $\varphi$. 
In this case, the irreducible variety $\mathcal{V}_\varphi$ is the Zariski closure of $\varphi(\Theta)$; that is, $\mathcal V_\varphi = \overline{\varphi(\Theta)}$.
Since we are interested in identifying polynomial constraints defining the set $\varphi(\Theta)$, the most basic problem to be solved is the \emph{implicitization problem} from algebraic geometry \cite{cox1997ideals}:
\begin{problem}[The implicitization problem]
    \label{prob: colored implicitization}
    Find a generating set (e.g. a basis) for the ideal $I_{\varphi}$. 
\end{problem}
Combining a solution to the implicitization problem for $\mathcal V_\varphi$ with the inequalities defining $\mathcal K$ provides a complete description of $\varphi(\Theta)$ in terms of polynomial constraints. 

Sometimes the variety $\mathcal V_\varphi$ is a toric variety. 
This is desirable since, up to a change of coordinates, the generating set for $I_\varphi$ sought in Problem~\ref{prob: colored implicitization} will consist of relatively simple polynomials (i.e., binomials), which often have a combinatorial interpretation reflecting the original application defining $\varphi(\Theta)$. 
Examples of this include discrete regular exponential families in statistics \cite{diaconis1998algebraic}, where the binomial generators provide a Markov chain for Monte Carlo approximations of p-values, as well as in theoretical physics \cite{juhnke2025triangulations}, where Gröbner bases for a toric ideal are used to compute integrals relevant to cosmological models. 
There are also numerous examples from algebraic combinatorics where Gröbner bases are computed for families of lattices polytopes, and then used to understand triangulations of the polytopes and related enumerative problems \cite{backman2025regular, lam2007alcoved,sturmfels1996grobner}. 
In applied algebra, the value of the toric structure has resulted in a general pursuit aiming to describe when the vanishing ideal of a semialgebraic set $\varphi(\Theta)$ is toric \cite{coons2023symmetrically, duarte2020equations, duarte2025algebraic, gorgen2022staged, misra2021gaussian, misra2022directed,nicklasson2023toric}. 
This problem can generally be referred to as the \emph{toric reparameterization problem}, since it amounts to identifying a semialgebraic set $\Theta'$ 
and a \emph{monomial} map $\varphi'$ for which $\mathcal V_\varphi = \overline{\varphi(\Theta)}$ and $\mathcal V_{\varphi'} = \overline{\varphi'(\Theta')}$ are isomorphic via a change of coordinates. 
\begin{problem}[The toric reparameterization problem]
    \label{prob: toric}
    Decide if there exists a monomial map $\varphi': \Theta'\to \mathbb R^m$ such that the $\mathcal V_\varphi \simeq \mathcal V_{\varphi'}$.  If it exists, give an explicit formula for the map $\varphi'$. 
\end{problem}
While there are computational methods for the existence question in Problem~\ref{prob: toric} \cite{kahle2025efficiently,maraj2026symmetry}, there is a general lack of techniques that give an explicit formula for the monomial map $\varphi'$. 
Provided with an explicit monomial map, lattice point combinatorics can be applied to extract a basis of the ideal, thereby solving Problem~\ref{prob: colored implicitization} and giving new polynomial constraints on $\varphi(\Theta)$ to be used in applications. 

Since not all $\varphi(\Theta)$ admit a toric reparametrization \cite{nicklasson2023toric}, alternative methods for identifying simple, but useful families of polynomials belonging to $I_\varphi$ have been explored.
For instance, we could seek a basis for the degree-1 component of $I_\varphi$ -- which amounts to computing the linear span of the variety $\mathcal V_\varphi$. 
When $\varphi(\Theta)$ is a statistical model, i.e., a set in which each point corresponds to a distribution, 
a linear constraint $p\in I_\varphi$ can be used as the null hypothesis $H_0: p(x) = 0$ in a test to evaluate if the data-generating distribution belongs to the model $\varphi(\Theta)$ based on observed data $x = (x_1,\ldots, x_m)$ \cite{sturma2024testing}.
While this is true for any polynomial in $I_\varphi$, the linearity assumption often simplifies statistical evaluations of the hypothesis test. 
Hence, there is a need for techniques to address the following problem. 
\begin{problem}
    \label{prob: linear span}
    Identify closed-form expressions for a collection of linear forms $h_1,\ldots, h_s\in\mathbb R[x_1,\ldots, x_m]$ whose zero locus is the linear span of $\mathcal V_\varphi$. 
\end{problem}

Classic techniques using elimination theory \cite{cox1997ideals} exist for computationally solving Problems~\ref{prob: colored implicitization},~\ref{prob: toric} and~\ref{prob: linear span} for $\varphi(\Theta)$ when $n$ and $m$ are relatively small. 
However, in most cases, we are interested in solving these problems for infinite families of semialgebraic sets $\mathcal F= \{\varphi_i(\Theta_i): \Theta_i\in \mathbb R^{n_i}, \varphi: \Theta_i\to \mathbb R^{m_i}, i\in\mathbb{Z}_{\geq0}\}$.  
Hence, the goal is to find closed-formed expressions for the solutions to each of these problems for all $i\in \mathbb{Z}_{\geq 0}$.  
This means that (combinatorial) techniques going beyond computation are required. 

Solutions to Problems~\ref{prob: colored implicitization},~\ref{prob: toric} and~\ref{prob: linear span} identify families of polynomials $\mathcal P_i\subset I_{\varphi_i}$ for $\varphi_i(\Theta_i)\in \mathcal F$.  
In the context of statistics, one common application of the polynomials $\mathcal P_i$ is to distinguish models in the family $\mathcal F$; that is, to show that $\varphi_i(\Theta_i)\neq \varphi_j(\Theta_j)$ for all $i,j\in\mathbb Z_{\geq 0}$. 
Specifically, $p\in \mathcal P_i$ can be used to show that $I_{\varphi_i}\neq I_{\varphi_j}$ by proving that $p\in I_{\varphi_i}$ while $p\notin I_{\varphi_j}$ (see \cite{boege2024colored, hollering2021identifiability}, for example). 
Hence, the results of this paper provide general techniques for addressing the so-called \emph{model distinguishability problem}. 

\begin{problem}[The model distinguishability problem]
\label{prob: colored distinguishability}
For a family of semialgebraic sets $\mathcal F= \{\varphi_i(\Theta_i): \Theta_i\in \mathbb R^{n_i}, \varphi: \Theta_i\to \mathbb R^{m_i}, i\in\mathbb{Z}_{\geq0}\}$ decide when $\varphi_i(\Theta_i)\neq \varphi_j(\Theta_j)$ for $i,j\in\mathbb Z_{\geq 0}$. 
\end{problem}

One technique for addressing the model distinguishability problem is to show that the linear span of $\mathcal V_{\varphi_i}$ is different from that of $\mathcal V_{\varphi_j}$. 
Conversely, we can seek solutions to the \emph{linear equivalence problem}:
\begin{problem}[The linear equivalence problem]
    \label{prob: linear equivalence}
    Characterize when $\varphi_i(\Theta_i),\varphi_j(\Theta_j)\in \mathcal F$ have the same linear span. 
\end{problem}
A solution to the Problem~\ref{prob: linear equivalence} serves as an initial step towards solving Problem~\ref{prob: colored distinguishability}. 
In some cases, a solution to a \emph{weak} formulation of Problems~\ref{prob: colored distinguishability} and~\ref{prob: linear equivalence} suffices. 
The weak formulation of Problems~\ref{prob: colored distinguishability} and~\ref{prob: linear equivalence} asks for a solution up to an invertible linear transformation $\mathbb R^{m_i}\simeq \mathbb R^{m_j}$. 

\subsection*{Our Contributions}

This article introduces a combinatorial technique for addressing Problems~\ref{prob: colored implicitization},~\ref{prob: toric},~\ref{prob: linear span},~\ref{prob: colored distinguishability} and~\ref{prob: linear equivalence} when the rational map $\varphi$ has coordinate functions $f_1,\ldots, f_m\in \mathbb Z_{\geq 0}[\theta_1,\ldots, \theta_n]$.  
The key observation is that the nonnegative integral coefficients induce a useful combinatorial structure on the variety $\mathcal V_\varphi$. 
From the coefficient vectors of the coordinate functions, we construct a poset $P_\varphi$ associated to the map $\varphi$. 
The poset $P_\varphi$ is used to identify a new family of polynomials, called \emph{$P_{\varphi}$-invariants}, belonging to the vanishing ideal $I_\varphi$.  
The $P_{\varphi}$-invariants are generally nonlinear, and they arise from regular functions on the variety of $\mathcal V_\varphi$ obtained via a Möbius inversion formula satisfied by $P_{\varphi}$ (given in Theorem~\ref{thm: mobius}). 

It is shown that the poset $P_\varphi$ parametrizes a canonical extension $\hat\varphi: \Theta \to \mathbb R^{P_\varphi}$ of the map $\varphi$ into a generally higher dimensional ambient space, where combinatorial properties of $P_\varphi$ can be used to solve the above problems. 
When the map $\hat\varphi$ is factored as the composition of a linear map $M_{\hat\varphi}$ and a toric (e.g. monomial) map $\phi(\theta)$, the Möbius inversion formula derived in Theorem~\ref{thm: mobius} yields a combinatorial row reduction operation on $M_{\hat\varphi}$. 
From this, we observe that the linear forms defining the linear span of $\mathcal V_\varphi$ are recovered from the \emph{linear} $P_{\hat{\varphi}}$-invariants, meaning that a closed-form description of the linear span of $\mathcal V_\varphi$ is obtained by computing the Möbius function of $P_\varphi$ (Theorem~\ref{thm: lin independence}). 
Hence, Theorem~\ref{thm: lin independence} reduces Problem~\ref{prob: linear span} to describing the combinatorics of the poset $P_\varphi$. 
As an additional benefit, we also obtain a new combinatorial criterion in terms of $P_\varphi$ for solving the toric reparameterization problem (Theorem~\ref{thm: toric condition}). 
When a basis for the toric ideal of $\phi(\theta)$ is known, Theorems~\ref{thm: lin independence} and~\ref{thm: toric condition} combine to yield a closed-form expression for a basis of $I_\varphi$ (see Section~\ref{rem: toric implicitization}). 
Hence, by describing the combinatorics of the poset $P_\varphi$, we obtain new techniques for solving the implicitization problem (Problem~\ref{prob: colored implicitization}), the toric reparametrization problem (Problem~\ref{prob: toric}) and the linear span problem (Problem~\ref{prob: linear span}). 

To demonstrate the applications of this combinatorial methodology, our running examples include the family of colored DAG models \cite{boege2024colored}, where full solutions to the model distinguishability problem (Problem~\ref{prob: colored distinguishability}) and the linear equivalence problem (Problem~\ref{prob: linear equivalence}) remain unknown.  
In this paper, we demonstrate how these techniques can solve these problems on small examples. 
In a follow-up paper \cite{garrote2026posets}, we use the techniques to settle (all five of) these problems for a subfamily of colored DAG models.

\subsection*{Plan of the Paper}
Section~\ref{section:prelim} presents motivating examples for this work.  
These include varieties parametrized using flats of matroids, degenerate subvarieties of secant varieties, the colored DAG models from statistics, and phylogenetic models. 
Section~\ref{subsec: minimal linear subspace} contains preliminary results on the canonical factorization of a polynomial map as a composition of a linear map $M_\varphi$ and a toric map $\phi(\theta)$, giving a reduction of Problems~\ref{prob: linear span} and~\ref{prob: linear equivalence} to matrix algebra.
Section~\ref{sec: posets} introduces the poset $P_\varphi$, derives the Möbius inversion formula (Theorem~\ref{thm: mobius}), and defines the family of $P_{\varphi}$-invariants (Definition~\ref{def: poset invariants}).
Subsection~\ref{subsec: the ideal of the poset} describes the poset parametrization $\hat\varphi: \Theta \to \mathbb R^{P_\varphi}$ extending $\varphi$, and shows that the vanishing ideal $I_\varphi$ is an elimination ideal of $I_{\hat{\varphi}}$. 
Subsection~\ref{subsec: combinatorial linear equiv} describes how the poset $P_\varphi$ gives a combinatorial model for solving Problem~\ref{prob: linear span} in terms of linear $P_{\hat\varphi}$-invariants.  
Subsections~\ref{subsec: combinatorial toric} and~\ref{rem: toric implicitization} show how $P_\varphi$ gives solutions to the toric reparametrization and implicitization problems, respectively.

\section{Motivating examples}\label{section:prelim} 
The assumption in this paper that the coordinate functions in~\eqref{eqn: rational map} satisfy $f_i\in \mathbb Z_{\geq 0}[\theta_1,\ldots, \theta_n]$ amounts to saying that $f_1,\ldots, f_m$ may be enumerating combinatorial objects corresponding to the monomials $\theta^\alpha = \theta_1^{\alpha_1}\cdots \theta_n^{\alpha_n}$ supported on $f_1,\ldots, f_m$. 
When some objects are enumerated by multiple $f_i$ (i.e., $\theta^\alpha$ appears in multiple $f_i$), there will be relations between the generating functions $f_1,\ldots, f_m$, which are reflected in the algebraic geometry of $\varphi(\Theta)$ and its variety $\mathcal V_\varphi = \overline{\varphi(\Theta)}$. 
The techniques developed in this paper aim to clarify these relations, and the associated algebraic geometry. 

This section presents examples from combinatorics, algebraic geometry and applications to serve as our running examples. 
The example in Section~\ref{subsec: matroids} is combinatorial, with the coordinate functions $f_i$ enumerating elements in flats of matroids. 
These \emph{matroid flat varieties} generalize the toric varieties of the well-studied \emph{edge polytopes} \cite{herzog2018binomial}. 
Section~\ref{subsec: secants} considers subvarieties of general determinantal varieties, which are popular examples of secant varieties in algebraic geometry, 
The example in Section~\ref{subsec: models} is both combinatorial and statistical, with coordinate functions $f_i$ enumerating directed graphs in order to parameterize certain statistical models. 
The fourth example, in Section~\ref{subsec: phylogenetics}, arises in phylogenetics. For this example, the techniques in this paper systematically give simple, combinatorial proofs of classic results celebrated in the field.

\subsection{Matroid flat varieties}
\label{subsec: matroids}
Let $M$ be a matroid with ground set $E \subset 2^V$ for a finite set $V$, where $2^V$ denotes the power set of $V$.  
Let $\mathfrak F$ denote the collection of flats of $M$. 
To each $i\in V$, we associate a real-valued parameter $\theta_i$, and for each flat $F\in \mathfrak F$ we define the multivariate generating polynomial
\[
f_F(\theta) = \sum_{e\in F}\prod_{i\in e}\theta_i. 
\]
enumerating the elements belonging to $F$.  
For a collection of nonempty flats $\mathfrak F'\subseteq \mathfrak F$, we obtain a rational map 
\[
\varphi_{M,\mathfrak F'}: \mathbb R^V\to \mathbb R^{\mathfrak F'}; \qquad \theta \mapsto (f_F(\theta))_{F\in \mathfrak F'}. 
\]
The image of $\varphi_{M,\mathfrak F'}$ is an irreducible algebraic variety $\mathcal V_{\varphi_{M,\mathfrak F'}} = \varphi_{M,\mathfrak F'}(\mathbb R^V)$ that we call the \emph{matroid flat variety} for $\mathfrak F'$.  
Since the same element $e\in E$ may appear in multiple flats $F, F'\in \mathfrak F'$, the geometry of $\mathcal V_{\varphi_{M,\mathfrak F'}}$ reflects combinatorial relations among the collection of generating functions $\{f_F : F\in\mathfrak F'\}$.  

A simple example of this construction is when $M$ is a graphic matroid for an undirected graph $G = (V,E)$. 
In this case the edge set $E\subset 2^V$ is the ground set of $M$.  
It is not hard to see that the relations on the generating functions $\{f_F : F\in\mathfrak F'\}$, and hence on the variety $\mathcal V_{\varphi_{M,\mathfrak F'}}$, include relations arising from the subposet of the lattice of flats of $M$ defined by $\mathfrak F'$.  
This connection will be made explicit in Section~\ref{subsec: poset invariants}.  
In the case of graphic matroids, we will see that the varieties $\mathcal V_{\varphi_{M,\mathfrak F'}}$ generalize the (toric) varieties associated to edge polytopes (Example~\ref{ex: toric graphic matroids}), which are well-studied in algebraic and geometric combinatorics \cite[Chapter 5]{herzog2018binomial}.

\subsection{Degenerate subvarieties of general determinantal varieties}
\label{subsec: secants}

Consider the $r$-th secant variety of rank $1$ matrices defined as the image of the following map
\[
\varphi^r_{m,n} : (\mathbb R^m \times\mathbb R^m)^r \to \mathbb R^m  \otimes \mathbb R^n; \qquad (s^{(1)}, t^{(1)}),\ldots,(s^{(r)}, t^{(r)}) \mapsto (\sum_{w=1}^r s^{(w)}_i t^{(w)}_j)_{ij} = (x_{ij}).
\]
Geometrically, the Zariski closure of the image of $\varphi^r_{m,n}$, e.g., the variety $\mathcal V_{\varphi^r_{m,n}}$, consists of all (real) $(m\times n)$ matrices of rank at most $r$.  
Hence, $\mathcal V_{\varphi^r_{m,n}}$ is known as the \emph{general determinantal variety of rank $r$} in algebraic geometry. 

General determinantal varieties (and their extensions to tensors) are widely used, where connections to nonnegative matrix factorizations have value in high-dimensional data analysis \cite{gillis2020nonnegative}. 
In these applications, subvarieties obtained by forcing the parameters $(s^{(1)}, t^{(1)}),\ldots,(s^{(r)}, t^{(r)})$ to satisfy polynomial equations are often used \cite{fu2018identifiability}.
While $\mathcal V_{\varphi^r_{m,n}}$ is nondegenerate, i.e., its linear span is equal to $\mathbb R^m  \otimes \mathbb R^n$, such subvarieties can live in proper linear subspaces of $\mathbb R^m  \otimes \mathbb R^n$. 
Thus, as our second example, we consider subvarieties of $\mathcal V_{\varphi^r_{m,n}}$ obtained by imposing additional linear constraints on the parameters.

For example, for $r=2$, let us choose subsets of indices $U\subseteq [m]$ and $V\subseteq [n]$ and impose the constraints $s^{(1)}_i = s^{(1)}_j$ for $i, j\in U$ and $t^{(2)}_i=t^{(2)}_j$ for $i, j\in V$. 
When $|U|, |V|\geq 2$ there will be linear polynomials in the vanishing ideal. 
In the most restrictive case when $U=[m]$ and $V = [n]$, the variety will be toric. 
These observations will be made (combinatorially) apparent in what follows.

\subsection{Colored DAG models}
\label{subsec: models}

Colored DAG models are semialgebraic sets that arise in the statistical analysis of multivariate data. 
They constitute a combinatorially rich family of semialgebraic sets with applications. 
The fundamental combinatorial structures used to define colored DAG models are special  subgraphs of a DAG $\GG$ known as treks. 
In fact, colored DAG models are parametrized by polynomials that enumerate treks in $\GG$.  

Let $\GG = ([m], E)$ be a simple directed acyclic graph (DAG) with node set $[m] = \{1,\ldots, m\}$ and edge set $E$. 
A \emph{directed path} (from $v_1$ to $v_k$) in $\GG$  is a sequence of nodes $P = (v_1,\ldots, v_k)$ such that $v_i\to v_{i+1}\in E$ for all $i\in[k-1]$. 
For convenience, we will identify the directed path $P$ with its set of edges; i.e., $P =\{v_i \to v_{i+1} : i\in[k-1]\}$. 

\begin{definition}[Treks]
Let $P=\{v_i\to v_{i+1} : i\in[k-1]\}$ and $Q = \{w_i \to w_{i+1} : i\in[\ell -1]\}$ be directed paths in a DAG $\GG$.
The pair $T = \{P,Q\}$ is called a \emph{trek} (between $v_k$ and $w_\ell$ in $\GG$) if the initial vertices of $P$ and $Q$ coincide, i.e. if $v_1=w_1$:
 \[
w_\ell \leftarrow w_{\ell - 1} \leftarrow \cdots \leftarrow  w_2 \leftarrow w_1=v_1\to v_2 \to \cdots \to v_{k-1} \to v_k.
\]
The node $v_1$ is called the \emph{top} of $T$, denoted $\textrm{top}(T)$. 
For $i,j\in[m]$ we let $\mathcal{T}(i,j)$ denote the set of all treks between $i$ and $j$ in $\GG$.   
A trek $T=\{P,Q\}\in \mathcal{T}(i,j)$ is called \emph{simple} if $P$ and $Q$ are vertex-disjoint except for $\textrm{top}(T)$, i.e., $V(P) \cap V(Q) = \{\mathrm{top}(T)\}$. 
\end{definition}

The combinatorics of treks in DAGs is enriched by assigning colors to the edges and vertices, producing the larger family of colored DAGs. 

\begin{definition}[Colored DAG]
\label{def: coloring}
A \emph{coloring} of a DAG $\GG = ([m], E)$ is a pair of surjective maps $c_{n,e} = (c_n: [m]\to [n],c_e: E \to [e])$ for positive integers $n$ and $e$. 
When $n$ and $e$ are understood, we simply write $c$ for $c_{n,e}$, $c_n$ or $c_e$. 
The pair $(\GG, c)$ is called a \emph{colored DAG}.    
\end{definition}

Two important colorings correspond to when the maps $c_n$ and $c_e$ are both bijective (i.e., $n = m$ and $e =|E|$) or when they are both constant (i.e., $n = e = 1$). 

\begin{definition}[The constant coloring and the uncoloring]
\label{def: constant coloring}
Let $\GG = ([m], E)$ be a DAG. 
\begin{enumerate}
    \item The coloring $c^\ast =c_{1,1}$ is called the \emph{constant coloring} of $\GG$.
    \item The coloring $c^\circ = c_{m, |E|}$ is called the \emph{uncoloring} of $\GG$. 
\end{enumerate}
\end{definition}

Every coloring $c$ of a DAG $\GG$ coarsens the constant coloring and refines the uncoloring. 
The colors can be interpreted as identifying vertices and edges with one another, meaning that the same trek (up to a graph isomorphism invariant under the coloring) may appear several times in a colored DAG $(\GG,c)$. 
The treks between nodes $i$ and $j$ in a colored DAG $(\GG, c)$ are enumerated using a multivariate generating polynomial known as a trek polynomial. 
\begin{definition}[Trek polynomial]
\label{def: trek polynomial}
Let $(\GG, c)$ be a colored DAG with $\GG = ([m],E)$ and $c = c_{n, e}$.  
Assign a variable $\omega_k$ to every $k\in[n]$ and a variable $\lambda_{k}$ to every $k\in [e]$. 
Given a trek $T=\{P,Q\}\in \mathcal{T}(i,j)$ we define the \emph{trek monomial}
\[
m_T = \omega_{c(\textrm{top}(T))}\prod_{k\to \ell\in P}\lambda_{c(k\to\ell)}\prod_{k\to \ell\in Q}\lambda_{c(k\to\ell)}. 
\]
The \emph{trek polynomial} for $i,j\in[m]$ is the sum over the trek monomials for all treks in $\mathcal{T}(i,j)$: 
\begin{equation}
\label{eqn: trek polynomial}
p_{i,j}^{(\GG,c)} = \sum_{T\in\mathcal{T}(i,j)}m_T \in \mathbb{Z}_{\geq 0}[\omega_1,\ldots, \omega_n, \lambda_1,\ldots, \lambda_e]. 
\end{equation}   
When the colored DAG $(\GG,c)$ is clear from context, we write $p_{i,j}$ for $p_{i,j}^{(\GG,c)}$. 
We further let 
$
\mathcal{T} = \{m_T : T\in\mathcal{T}(i,j), \, i,j\in[m]\}
$
denote the set of all trek monomials arising from $(\GG,c)$. 
\end{definition}

For any coloring $c$, evaluating $p_{i,j}^{(\GG,c)}$ at the all 1s vector yields the number of treks between $i$ and $j$ in $\GG$. 
Depending on the coloring, the coefficients of $p_{i,j}^{(\GG,c)}$ reveal other combinatorial data. 
For instance, $p_{i,j}^{(\GG,c^\ast)} = \sum_{k\geq 0}\alpha_k\omega\lambda^k\in\mathbb{Z}_{\geq 0}[\omega, \lambda]$ where $\alpha_k$ is the number of treks $T = \{P,Q\}$ between $i$ and $j$ of \emph{length} $k$; i.e., $k = |P| + |Q|$.  The constant coloring also satisfies the following helpful property. 

\begin{lemma}
    \label{lem: degree closure}
    Consider the constantly colored DAG $(\GG, c^\ast)$ for $\GG = ([m], E)$. 
    The polynomial $\sum_{1\leq i\leq j\leq m}p_{i,j}^{(\GG,c^\ast)}$ has support $\mathcal{T} =\{\omega, \omega\lambda, \ldots, \omega\lambda^N\}$, where $N$ is the length of the longest trek in $G$. 
\end{lemma}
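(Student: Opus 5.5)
Under the constant coloring every trek monomial is $m_T=\omega\lambda^{|P|+|Q|}$, so a trek of length $k$ contributes exactly the monomial $\omega\lambda^k$. The support $\mathcal{T}$ of $\sum_{i\le j}p_{i,j}^{(\GG,c^\ast)}$ is therefore $\{\omega\lambda^k : \text{some trek in } \GG \text{ has length } k\}$. Cancellation cannot occur, because all coefficients are nonnegative integers; each $\alpha_k$ simply counts treks of length $k$. Summing over unordered pairs $i\le j$ loses nothing, since a trek between $i$ and $j$ is also one between $j$ and $i$ (the pair $\{P,Q\}$ is unordered). The claim thus reduces to showing that the set of trek lengths occurring in $\GG$ is exactly $\{0,1,\ldots,N\}$. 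The length is finite because $\GG$ is acyclic, and $N$ is the maximum, so it suffices to prove closure under decrements.

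The key step is the following: if a trek of length $k\ge 1$ exists, then a trek of length $k-1$ exists. Let $T=\{P,Q\}$ have length $k\ge1$. At least one of $P,Q$ is nonempty, say $P=(v_1,\ldots,v_r)$ with $r\ge 2$. Deleting the final edge $v_{r-1}\to v_r$ leaves $P'=(v_1,\ldots,v_{r-1})$, which is still a directed path starting at $\mathrm{top}(T)=v_1$. Hence $\{P',Q\}$ is a trek between $v_{r-1}$ and $w_\ell$ of length $k-1$. This uses only that treks are not required to be simple, per the definition of $\mathcal{T}(i,j)$, which contains all treks. Even so, $P'$ and $Q$ overlap at most where $P$ and $Q$ did, so simplicity would be preserved anyway.

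The base case is length $0$: for any vertex $i$, the trivial trek with $P=Q=(i)$ lies in $\mathcal{T}(i,i)$ and gives the monomial $\omega$. Downward induction from a trek of length $N$ then produces every length $0,\ldots,N$, and no length exceeds $N$ by definition. This yields the stated support. The only delicate point is confirming from the definitions that zero-length paths count, so that $\omega$ really appears (via $p_{i,i}$); I would check this convention explicitly, since the rest is routine.
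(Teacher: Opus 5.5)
Your proposal is correct and follows essentially the same route as the paper: reduce to showing that the set of trek lengths is closed under decrement, and produce a trek of length $k-1$ by deleting the final edge of a nonempty leg of a trek of length $k$. Your extra remarks make explicit points that the paper's proof leaves implicit: that nonnegative coefficients rule out cancellation, that $\mathcal{T}(i,j)=\mathcal{T}(j,i)$, and that trivial one-vertex paths are admissible.
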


\begin{proof} 
    It suffices to show that $\omega\lambda^{s-1}\in\mathcal{T}$ whenever $\omega\lambda^s\in\mathcal{T}$ and $s\geq 1$. 
    If $\omega\lambda^s\in\mathcal{T}$, then there exists a trek $T =\{P,Q\}$ in $\GG$ composed of two directed paths $P=\{v_i\to v_{i+1} : i\in[k-1]\}$ and $Q = \{w_i \to w_{i+1} : i\in[\ell -1]\}$ with $v_1 = w_1$.  
    Since $|P| + |Q| = s \geq 1$, then either $k > 1$ or $\ell > 1$.
    Without loss of generality, we assume $k > 1$, in which case $m_{T'}\in \mathcal{T}$, where $T' = \{P\setminus\{v_{k-1}\to v_k\}, Q\}$. 
\end{proof}

Trek polynomials are used to define semialgebraic sets associated to colored DAGs. 
These sets are known as colored DAG models \cite{boege2024colored}. 
 
\begin{definition}[Colored DAG model]
\label{def: colored DAG model}
For a colored DAG $(\GG,c)$, we define the polynomial map
\begin{equation}
\label{eqn: trek map}
\begin{split}
\varphi_{\GG,c}&: \mathbb{R}^{e}\times \mathbb{R}^n_{>0} \to \mathbb{R}^{m\times m}; \\
\varphi_{\GG,c}&:(\lambda_1,\ldots, \lambda_e,\omega_1,\ldots, \omega_n)\mapsto \left(p_{i,j}^{(\GG, c)}(\lambda_1,\ldots, \lambda_e,\omega_1,\ldots, \omega_n)\right)_{i=1, j=1}^m.
\end{split}
\end{equation}
The image $\mathcal{M}(\GG,c) = \varphi_{\GG,c}\left(\mathbb{R}^{e}\times \mathbb{R}^n_{>0}\right)$, is called the \emph{colored (Gaussian) DAG model} for $(G,c)$. 
\end{definition}
Since $\mathcal{T}(i,j) = \mathcal{T}(j,i)$ for all $i,j\in[m]$, 
any matrix $\Sigma= (\sigma_{ij})_{i=1,j=1}^m\in\mathcal{M}(\GG,c)$ is symmetric (and, in fact, positive definite). 
Hence it can be described via $\binom{m+1}{2}$ variables $\sigma_{ij}$, for $1\leq i \leq j \leq m$. 
So we may identify the image of $\varphi_{\GG,c}$ with a subset of $\R^{\binom{m+1}{2}}$.

\begin{remark}[Statistical interpretation of a colored DAG model]
\label{rem: statistical interpretation}
In statistics, the parameter value $\lambda_{ij}$ is interpreted as the \emph{direct effect} of a stochastic variable $X_i$ on $X_j$ and $\omega_i$ is the variance of an error term associated to the variable $X_i$ (see \cite{drton2018algebraic}).  
The coloring $c(i\to j) = c(k\to \ell)$ captures when two direct effects are the same and $c(i) =c(j)$ encodes when two variables have the same error variances. 
The set $\mathcal{M}(\GG,c)$ is the collection of covariance matrices for a linear Gaussian model where dependencies between the stochastic variables $X_1,\ldots, X_m$ are governed by the parameter equality constraints specified by $c$ and the edge structure of the DAG $\GG$ \cite{boege2024colored}. 
The models $\mathcal{M}(\GG,c)$ are used in the field of causality \cite{pearl2009causality} to capture the cause-effect structure $\GG$ guiding the interactions between the individual variables $X_1,\ldots, X_m$ and the homogeneity relations $c$ on the model parameters \cite{boege2024colored,peters2014identifiability,wu2023partial}. 
\end{remark}

The colored DAG models $\mathcal{M}(\GG,c)$ and $\mathcal{M}(\HH,c')$ can in fact be equal, even if $\GG\neq \HH$ or $c\neq c'$.  
If $\mathcal{M}(\GG,c)= \mathcal{M}(\HH,c')$ but $i\to j$ is an edge in $\GG$ that is not in $\HH$, then interpreting the parameter $\lambda_{ij}$ as a direct effect of $X_i$ on $X_j$ (as in Remark~\ref{rem: statistical interpretation}) can lead to faulty conclusions in real data studies, since the graph $\HH$ lacks this edge while still equaling representing the distribution. 
Hence, for statistical applications, it is important to characterize when $\mathcal{M}(\GG,c)\neq\mathcal{M}(\HH,c')$ \cite{boege2024colored}; i.e., it is important to solve the model distinguishability problem (Problem~\ref{prob: colored distinguishability}) for the family 
\begin{equation}
\label{eqn: colored DAG family}
\mathcal F = \{\mathcal M(\GG,c)=\varphi_{\GG,c}(\mathbb R^{e}\times \mathbb R^{n}_{>0}) : (\GG,c) \textrm{ a colored DAG}\}.
\end{equation}
One technique for approaching Problem~\ref{prob: colored distinguishability} is to solve the implicitization problem for the vanishing ideal $I_{\varphi_{\GG,c}}$ for every colored DAG $(\GG,c)$.  
However, the implicitization problem (Problem~\ref{prob: colored implicitization}), and the related toric reparametrization problem (Problem~\ref{prob: toric}), are both wide-open for the ideals $I_{\GG,c}$. 
Since $\mathcal{M}(\GG,c)$ equals the intersection of the zero locus of the vanishing ideal $I_{\GG,c}$ with the positive definite cone \cite{boege2024colored}, it follows that the model distinguishability problem can be settled by clarifying when $I_{\GG,c}\neq I_{\HH,c'}$. 

One way to show $I_{\GG,c}\neq I_{\HH,c'}$ is to find a polynomial $f\in \R[\sigma_{ij} : 1\leq i\leq j\leq m]$ such that $f\in I_{\GG, c}$ but $f\notin I_{\HH, c'}$. 
This technique does not require a complete solution to Problem~\ref{prob: colored implicitization} in order to solve Problem~\ref{prob: colored distinguishability}, but only a single polynomial $f$.
For example, $f$ could be a linear form. 
While the linear forms satisfied by a graphical model are generally of statistical value \cite{wille2004sparse,magwene2004estimating}, they are not well-understood for colored DAG models. 
In particular, Problems~\ref{prob: linear span} and~\ref{prob: linear equivalence} are also wide-open for the family of colored DAG models.

\subsection{Varieties of phylogenetic models}
\label{subsec: phylogenetics}

Phylogenetic models describe the evolution of discrete genetic states, such as DNA or protein sequences, as Markov processes operating along the branches of a tree or network. In nucleotide substitution models like the Jukes-Cantor model, which assume equal base frequencies and uniform mutation rates, the joint probability distribution of the states observed at the leaves is parameterized by transition matrices assigned to the tree's edges.
A classic problem in algebraic statistics is to find the \emph{phylogenetic invariants}; that is, the polynomials that vanish on the expected pattern frequencies of a specific tree topology \cite{sullivant2023algebraic}.
Linear invariants are particularly valuable since they are robust to rate heterogeneity across the sequences and can be used to distinguish topologies even in the presence of mixtures of trees \cite{lake1987, Casanellas2016, Casanellas2024}.

\begin{figure}[t]
    \centering
    \begin{tikzpicture}[scale=1.2, thick]
        \node[circle, fill=black, inner sep=1.5pt] (u) at (0,0) {};
        \node[circle, fill=black, inner sep=1.5pt] (v) at (1,0) {};
        
        \node (l1) at (-1, 0.7) {1};
        \node (l2) at (-1, -0.7) {2};
        \node (l3) at (2, 0.7) {3};
        \node (l4) at (2, -0.7) {4};
        
        \draw (l1) -- (u) node[midway, above] {$e_1$};
        \draw (l2) -- (u) node[midway, below] {$e_2$};
        \draw (l3) -- (v) node[midway, above] {$e_3$};
        \draw (l4) -- (v) node[midway, below] {$e_4$};
        \draw (u) -- (v) node[midway, above, inner sep=4pt] {$e_5$};
    \end{tikzpicture}
    \caption{An unrooted 4-leaf phylogenetic tree with four pendant edges and one internal edge.}
    \label{fig: 4_leaf_tree}
\end{figure}

Consider the Jukes-Cantor model on the unrooted 4-leaf phylogenetic tree depicted in Figure~\ref{fig: 4_leaf_tree}. 
Let $\Sigma = \{\textrm{A, C, G, T}\}$ denote the set of four nucleotide states. Under this model, the transition probabilities along each edge $e_i$ for $i \in \{1, \dots, 5\}$ are symmetric, and are parameterized by $a_i$ (the probability of no mutation) and $b_i$ (the probability of a specific mutation), where $a_i+3b_i=1$. These probabilities are collected into a $4 \times 4$ transition matrix $M^{(i)}$ for edge $e_i$ where $M^{(i)}_{y, z} = a_i$ if $y = z$, and $M^{(i)}_{y, z} = b_i$ if $y \neq z$.
Since there are 4 leaves and 4 possible nucleotide states $\{\textrm{A,C,G,T}\}$, there are $4^4 = 256$ possible site patterns, yielding an initial polynomial map $\varphi: \Theta \to \mathbb{R}^{256}$. 

The coordinate function for a specific leaf observation $x = (x_1, x_2, x_3, x_4)$ is obtained by marginalizing over all possible unobserved states $x_u, x_v$ at the internal nodes $u$ and $v$. Assuming a uniform root distribution, the parameterization is:
\begin{equation}
    \varphi_{x}(\theta) = \frac{1}{4} \sum_{x_u} \sum_{x_v} M^{(1)}_{x_u, x_1} M^{(2)}_{x_u, x_2} M^{(5)}_{x_u, x_v} M^{(3)}_{x_v, x_3} M^{(4)}_{x_v, x_4}.
\end{equation}

However, due to the symmetries inherent in the Jukes-Cantor model, many pattern probabilities are strictly equal (e.g., $p_{\texttt{AACC}} = p_{\texttt{CCGG}}$). Grouping these symmetric patterns reduces the parameterization to 15 distinct coordinate polynomials, giving a reduced map $\varphi_{JC}: \Theta \to \mathbb{R}^{15}$.

By factoring out the uniform root distribution, the coordinate functions of this reduced map are polynomials in $\mathbb{Z}_{\geq 0}[a_1, \ldots, a_5, b_1, \ldots, b_5]$. For example, the expected frequency of observing the pattern $\texttt{ACTT}$ (where leaves 1 and 2 observe distinct states, and leaves 3 and 4 share a third state) is governed by the polynomial:
\begin{equation}
    \begin{split}
    p_{\texttt{ACTT}} =\ &a_1 a_3 a_4 b_2 b_5 + a_1 a_5 b_2 b_3 b_4 + 2 a_1 b_2 b_3 b_4 b_5 + a_2 a_3 a_4 b_1 b_5 + a_2 a_5 b_1 b_3 b_4 \\
    &+ 2 a_2 b_1 b_3 b_4 b_5 + a_3 a_4 a_5 b_1 b_2 + a_3 a_4 b_1 b_2 b_5 + a_5 b_1 b_2 b_3 b_4 + 5 b_1 b_2 b_3 b_4 b_5.
    \end{split}
\end{equation}

The Jukes-Cantor model $\varphi_{JC}(\Theta)$ has linear span defined by a pair of linear forms called the \emph{Lake invariants}, which are well-known in phylogenetics.  
A solution to the toric reparametrization problem for $\varphi_{JC}(\Theta)$ is also known, where the reparametrization is deduced using the discrete Fourier transform -- a celebrated technique for toric reparametrization problems for group-based phylgenetic models. 

Since the coordinate functions of $\varphi_{JC}$ have only nonnegative integer coefficients, this phylogenetic model precisely satisfies the conditions of our combinatorial framework. 
In the following, we show that our purely combinatorial approach via posets systematically recovers both the Lake invariants and a toric reparametrization of $\varphi_{JC}(\Theta)$ that does not require the discrete Fourier transform machinery.

\section{Preliminary results on the linear span of \texorpdfstring{\(\mathcal{V}_{\varphi}\)}{}}
\label{subsec: minimal linear subspace}

We now provide some preliminary results regarding the linear span and linear equivalence problems (Problems~\ref{prob: linear span} and~\ref{prob: linear equivalence}). 
This section recalls the canonical factorization of a polynomial map $\varphi: \Theta\to \mathbb R^m$ into the composition of a linear $M_\varphi$ and a toric map $\phi(\theta)$. 
The key tool we obtain from this factorization is Theorem~\ref{thm: min lin subspace}, which describes the linear span of $\varphi(\Theta)$ as the kernel of the matrix $M_\varphi^t$, where the supscript denotes matrix transposition.  
When the coordinate functions $f_1,\ldots, f_m$ of the map $\varphi$ satisfy $f_1,\ldots, f_m\in\mathbb Z_{\geq 0}[\theta_1,\ldots, \theta_n]$, this reduces Problem~\ref{prob: linear equivalence} to showing that a pair of combinatorially-structured matrices have the same kernel (Corollary~\ref{cor: lin equiv reduction}). 
The combinatorial structure of these matrices is the topic of Section~\ref{sec: posets}, where the computation of $\ker(M_\varphi^t)$ is further reduced to describing properties of a certain poset. 

In the following, let $\Theta\subseteq \mathbb R^n$ contain an open subset of $\mathbb R^n$ and 
\[
\varphi: \Theta\to \mathbb R^m; \qquad \theta \mapsto (f_1(\theta),\ldots, f_m(\theta)),
\]
where $f_1,\ldots, f_m\in\mathbb Z_{\geq 0}[\theta_1,\ldots, \theta_n]$. We consider the set of all monomials in the variables $\theta_1,\ldots, \theta_n$ that have a nonzero coefficient in at least one of $f_1,\ldots, f_m$:
\begin{equation}
\label{eqn: support set}
\mathcal T = \{\theta^\alpha = \theta_1^{\alpha_1}\cdots\theta_n^{\alpha_n} : f_i(\theta) = \sum_{\beta\in\mathbb Z_{\geq 0}^n}c_\beta\theta^\beta \textrm{ with } c_\alpha\neq 0 \textrm{ for some } i\in[m]\}. 
\end{equation}
Let $\R^{\mathcal{T}}$ denote the real Euclidean space with one standard basis vector for each monomial $\tau\in \mathcal{T}$, and let $\mathbb{T} = (\tau : \tau \in \mathcal{T})$ denote the column vector of these monomials. 
\begin{definition}[The matrix $M_\varphi$]
For the map $\varphi: \Theta\to \mathbb R^m$ where $\theta \mapsto (f_1(\theta),\ldots, f_m(\theta))$ with $f_1,\ldots, f_m\in\mathbb Z_{\geq 0}[\theta_1,\ldots, \theta_n]$ supported on the monomials in $\mathcal T$ from~\eqref{eqn: support set}, let $c_i = (c_{i,\alpha} : \theta^\alpha \in \mathcal T)$ denote the coefficient vector of $f_i$ in $\mathbb R^\mathcal T$.  Let $M_\varphi\in \mathbb Z^{m\times \mathcal T}_{\geq 0}$ denote the matrix whose rows are $c_1,\ldots, c_m$.  
\end{definition}
Given $\theta\in \Theta$, we have that 
\begin{equation}
\label{eqn: map decomp}
\varphi(\theta) = M_{\varphi}\mathbb{T}.
\end{equation}

The formula in~\eqref{eqn: map decomp} is a standard decomposition of the polynomial map $\varphi$ into a linear part, given by $M_{\varphi}$, and a nonlinear part 
\begin{equation}
    \label{eqn: nonlinear part}
    \phi_{\mathcal{T}}: \Theta \to \R^{\mathcal{T}}; \qquad \phi_{\mathcal{T}}(\theta) = \mathbb{T}.
\end{equation} 
Let $I_{\mathcal{T}} = \ker(\phi_{\mathcal{T}}^\ast)$, where 
\begin{equation}
\label{eqn: nonlinear part dual}
\phi_{\mathcal{T}}^\ast: \R[\rho_\tau: \tau\in\mathcal{T}] \to \R[\theta_1,\ldots, \theta_n]; \qquad 
\phi_{\mathcal{T}}^\ast: \rho_\tau \mapsto \tau
\end{equation} 
denotes the algebraic pullback of the map $\phi_{\mathcal{T}}$, 
and we let $\mathcal V_{\mathcal{T}}$ denote the associated affine toric variety; i.e., the zero locus of the ideal $I_{\mathcal{T}}$. 
We then we have the following result. 
\begin{theorem}
    \label{thm: min lin subspace}
    Let $\Theta$ contain an open subset of $\mathbb R^n$ and 
    $\varphi: \Theta\to \mathbb R^m$ be a polynomial map where $\theta \mapsto (f_1(\theta),\ldots, f_m(\theta))$ with $f_1,\ldots, f_m\in\mathbb Z_{\geq 0}[\theta_1,\ldots, \theta_n]$ supported on the monomials in $\mathcal T$. 
    If the toric ideal $I_{\mathcal{T}}$ is homogeneous with respect to the standard grading then the linear span of $\varphi(\Theta)$ is the zero locus of
    \[
    \left\{\sum_{i=1}^m\alpha_{i}x_i : (\alpha_1,\ldots, \alpha_m)\in \ker(M_{\varphi}^t)\right\}. 
    \]
    In particular, a basis for $\ker(M_{\varphi}^t)$ defines the linear span of $\varphi(\Theta)$. 
\end{theorem}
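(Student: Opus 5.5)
The argument reduces to two facts: the linear span of $\varphi(\Theta)$ is the image under $M_\varphi$ of the linear span of $\phi_{\mathcal T}(\Theta)$, and under the homogeneity hypothesis the latter is all of $\R^{\mathcal T}$. Granting both, $\operatorname{span}\varphi(\Theta) = \operatorname{im}(M_\varphi) = \ker(M_\varphi^t)^{\perp}$. By definition this orthogonal complement is exactly the common zero locus of the forms $\sum_i \alpha_i x_i$ with $\alpha\in\ker(M_\varphi^t)$. A basis of $\ker(M_\varphi^t)$ then cuts out the same set, which gives the final sentence of the statement.

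For the first fact I use \eqref{eqn: map decomp}: $\varphi = M_\varphi\circ\phi_{\mathcal T}$ on $\Theta$. Since $M_\varphi$ is linear, the linear span of $M_\varphi(S)$ is $M_\varphi(\operatorname{span} S)$ for any set $S$. Taking $S=\phi_{\mathcal T}(\Theta)$ gives $\operatorname{span}\varphi(\Theta)=M_\varphi(\operatorname{span}\phi_{\mathcal T}(\Theta))$.

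The second fact is the main step. I argue by contradiction: suppose $\sum_{\tau}\beta_\tau \rho_\tau$ vanishes on $\phi_{\mathcal T}(\Theta)$. Then the polynomial $\sum_\tau \beta_\tau \tau(\theta)$ vanishes on $\Theta$. Because $\Theta$ contains a nonempty open subset of $\R^n$, a polynomial vanishing on $\Theta$ is identically zero. The monomials in $\mathcal T$ are distinct, so all $\beta_\tau=0$. Hence no nonzero linear form vanishes on $\phi_{\mathcal T}(\Theta)$, and its span is $\R^{\mathcal T}$.

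This argument does not actually use the homogeneity of $I_{\mathcal T}$, so I expect that hypothesis to play only a minor role. Most likely it is there to ensure that "linear span" can be read as the span through the origin, equivalently that $\mathcal V_{\mathcal T}$ is a cone. In that case the affine span and the linear span coincide, and the zero locus of linear forms without constant terms describes the correct object. I would address this in a remark: if one wants affine linear relations $\alpha_0+\sum\alpha_i x_i$, then the same linear independence of the distinct monomials in $\mathcal T$ shows that constant terms can only arise through the monomial $1\in\mathcal T$. Homogeneity of $I_{\mathcal T}$ (all monomials in $\mathcal T$ of equal degree when $1\notin\mathcal T$, up to the grading) is what excludes this degenerate case. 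The main care needed is in stating cleanly which notion of span is meant. The rest is the routine linear algebra identity $\operatorname{im}(M)=\ker(M^t)^\perp$.
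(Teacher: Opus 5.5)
Your proof is correct and has the same skeleton as the paper's: factor $\varphi=M_\varphi\circ\phi_{\mathcal T}$, show that $\phi_{\mathcal T}(\Theta)$ spans $\R^{\mathcal T}$, and finish with linear algebra.

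\textbf{Framing.} The paper argues dually. A linear form $\alpha$ vanishing on $\varphi(\Theta)$ makes $M_\varphi^t\alpha$ orthogonal to every $\phi_{\mathcal T}(\theta)$, hence $M_\varphi^t\alpha=0$. This speaks directly about the degree-one part of $I_\varphi$, which is how the result is used later. However, it argues only one inclusion and leaves the trivial converse implicit. Your primal identity $\operatorname{span}\varphi(\Theta)=\operatorname{im}(M_\varphi)=\ker(M_\varphi^t)^{\perp}$ gives both inclusions at once.

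\textbf{Spanning step.} The paper justifies it by Zariski density of $\phi_{\mathcal T}(\Theta)$ in $\mathcal V_{\mathcal T}$, plus the assertion that a toric variety with homogeneous ideal and distinct monomials is nondegenerate. Your justification, that distinct monomials are linearly independent as functions on a nonempty open set, is exactly the content behind that assertion, with no toric geometry needed.

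\textbf{Homogeneity.} You are right that it is never used for the span through the origin. It matters only for affine-linear relations. There it rules out $1\in\mathcal T$: otherwise $\rho_1-1\in I_{\mathcal T}$, and a homogeneous ideal containing it would be the unit ideal. The weaker condition $1\notin\mathcal T$ alone already makes the affine and linear spans coincide.

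One correction to your parenthetical gloss: homogeneity of $I_{\mathcal T}$ does not mean the monomials of $\mathcal T$ have equal degree. It means their exponent vectors lie on an affine hyperplane missing the origin. For example, the trek monomials $\omega,\omega\lambda,\dots,\omega\lambda^N$ of a constantly colored DAG have different degrees, yet their toric ideal is homogeneous.
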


\begin{proof}
    Suppose that $f = \sum_{i=1}^m\alpha_{i}x_i$ is a linear form vanishing on $\varphi(\Theta)$. 
    Then $f\in I_{\varphi}$, which implies
    \[
    \begin{split}
    0 = f(\varphi(\theta)) = \sum_{i=1}^m\alpha_{i} \varphi(\theta)_{i} = \alpha^tM_{\varphi}\phi_{\mathcal{T}}(\theta),
    \end{split}
    \]
    for all $\theta \in\Theta$,
    where $\varphi(\theta)_{i}$ denotes the $i$-th entry of the vector produced by $\varphi$, and $\phi_{\mathcal{T}}$ is the map defined in~\eqref{eqn: nonlinear part}. 
    It follows that the vector $M_{\varphi}^t\alpha$ is orthogonal to $\phi_{\mathcal{T}}(\theta)$ for all $\theta\in \Theta$. 
    Since $\Theta$ contains an open subset of $\mathbb R^n$ then $\phi_{\mathcal{T}}(\Theta)$ is Zariski dense in $\mathcal V_{\mathcal{T}}$, and the points in $\phi_{\mathcal{T}}(\Theta)$ will linearly span the entire ambient space $\R^{\mathcal{T}}$ whenever $\mathcal V_{\mathcal{T}}$ is nondegenerate (i.e., not contained in any proper linear subspace).
    Since the only vector orthogonal to a spanning set is the zero vector, showing that $\mathcal V_{\mathcal{T}}$ is nondegenerate is sufficient to prove that $M_{\varphi}^t\alpha = 0$; in other words, that $\alpha \in \ker(M_{\varphi}^t)$.
    Since $\mathcal V_{\mathcal{T}}$ is the toric variety of a homogeneous toric ideal under the standard grading defined by a monomial map in which all monomials are distinct, it follows that it is nondegenerate. 
\end{proof}

The following corollary to Theorem~\ref{thm: min lin subspace} is immediate, and it reduces Problem~\ref{prob: linear equivalence} to matrix algebra. 
\begin{corollary}
    \label{cor: lin equiv reduction}
    Let $\varphi(\Theta)$ and $\varphi'(\Theta')$ be two semialgebraic sets fulfilling the conditions from Theorem~\ref{thm: min lin subspace}.  Then $\varphi(\Theta)$ and $\varphi'(\Theta')$ are linearly equivalent if and only if $\ker(M_{\varphi}^t) = \ker(M_{\varphi'}^t)$. 
\end{corollary}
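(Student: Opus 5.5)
The plan is to apply Theorem~\ref{thm: min lin subspace} to each of the two sets and then compare the resulting linear spans. I read ``linearly equivalent'' in the sense of Problem~\ref{prob: linear equivalence}: the two sets live in the same ambient space $\mathbb R^m$ and have the same linear span. In particular $M_\varphi$ and $M_{\varphi'}$ both have $m$ rows, although they may have different column sets $\mathcal T$ and $\mathcal T'$. Hence $\ker(M_\varphi^t)$ and $\ker(M_{\varphi'}^t)$ are both subspaces of $\mathbb R^m$, and the comparison makes sense.

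\emph{Step 1: express each span through its kernel.} By Theorem~\ref{thm: min lin subspace}, the linear span $L$ of $\varphi(\Theta)$ equals the common zero locus of the linear forms $\alpha\cdot x$ with $\alpha\in K:=\ker(M_\varphi^t)$. In other words, $L=K^\perp$. Applying the same theorem to $\varphi'$ with $K':=\ker(M_{\varphi'}^t)$ gives $L'=(K')^\perp$.

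\emph{Step 2: compare the spans.} The map $W\mapsto W^\perp$ is an inclusion-reversing involution on subspaces of $\mathbb R^m$, since $(W^\perp)^\perp=W$ in finite dimension. It follows that $L=L'$ if and only if $K=K'$, which is exactly the claimed equivalence.

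There is no real obstacle beyond one small point in Step 1. Theorem~\ref{thm: min lin subspace} is phrased as ``the linear span is the zero locus of these forms,'' and this identifies the span with $K^\perp$ directly. If one prefers to argue through the theorem's proof, which shows that every vanishing linear form comes from $K$, one must also note the converse: every $\alpha\in K$ does give a vanishing form, because $\alpha^tM_\varphi\mathbb T=(M_\varphi^t\alpha)^t\mathbb T=0$. With both directions in hand, the set of linear forms vanishing on $\varphi(\Theta)$ is exactly $K$, the span is $K^\perp$, and the corollary follows.
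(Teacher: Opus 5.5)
Your proposal is correct and is essentially the paper's argument. The paper simply declares the corollary immediate from Theorem~\ref{thm: min lin subspace}, and you make explicit the one step involved: the theorem identifies the linear span with $\ker(M_\varphi^t)^{\perp}\subseteq\mathbb R^m$, and $W\mapsto W^{\perp}$ is injective on subspaces of $\mathbb R^m$. Your side remark also holds: every $\alpha\in\ker(M_\varphi^t)$ trivially gives a vanishing form, which is the direction the theorem's proof leaves implicit.
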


In the remainder of this section, we give more detailed results for two of our running examples from Section~\ref{section:prelim}; i.e., matroid flat varieties and colored DAG models. 

\subsection{The matrix \texorpdfstring{\(M_\varphi\)}{} for matroid flat varieties}
\label{subsec: ideal varieties}
Let $M$ be a matroid with ground set $E\subset 2^V$ for some finite set $V$, and consider the matroid flat varieties $\mathcal V_{M, \mathcal F'}$ defined in Section~\ref{subsec: matroids}. 
Recall that for elements $p_1,\ldots, p_s$ in a poset $P$ with partial order $\preceq$, the \emph{(order) ideal} generated by $p_1,\ldots, p_s$ is the set 
\[
\langle p_1,\ldots, p_s\rangle = \{s\in P : s\preceq p_i \textrm{ for some } i\in [s]\}.
\]
If $P_M$ is the lattice of flats of $M$, we may define the following family of semialgebraic sets associated to $M$, which we call the \emph{ideal varieties} of $M$:
\begin{equation}
\label{eqn: antichain family}
\mathcal F_{M, ideal} = \{\mathcal V_{\varphi_{M, \mathfrak F'}} : \mathfrak F' = \langle F_1,\ldots, F_s\rangle \textrm{ for } F_1,\ldots, F_s \textrm{ an antichain in } P_M\}. 
\end{equation}
For every ideal $\mathfrak F'$ of $P_M$ the variety $\mathcal V_{\varphi_{M, \mathfrak F'}}$ belongs to the family $\mathcal F_{M, ideal}$ since every ideal in a poset is determined by a (unique) antichain. 
For $\mathcal V_{\varphi_{M, \mathfrak F'}}\in \mathcal F_{M, ideal}$ the matrix $M_{\varphi_{M, \mathfrak F'}}$ has rows indexed by $F\in \mathfrak F'$. 
Let $b_F\in \mathbb R^{\mathfrak F'}$ denote the standard basis vectors of $\mathbb R^{\mathfrak F'}$. 
The rows of $M_{\varphi_{M, \mathfrak F'}}$ indexed by singletons $\{e\}\subset \mathfrak F' = \langle F_1,\ldots, F_s\rangle$ are the standard basis vectors $b_{\{e\}}$ for $e\in F_i$ for some $i\in[s]$. 
Since $P_M$ is a geometric lattice, every other row in $M_{\varphi_{M, \mathfrak F'}}$, corresponding to an $F\in \mathfrak F'$, is the sum of the rows indexed by the singletons $\{e\}\preceq F$ in $P_M$.  
In particular, the vectors $b_F - \sum_{\{e\}\preceq F}b_{\{e\}}\in \mathbb R^{\mathfrak F'}$ for $F\in \mathfrak F'$ span the kernel of $M_{\varphi_{M, \mathfrak F'}}$, and the dimension of $\ker(M_{\varphi_{M, \mathfrak F'}})$ equals $|\bigcup_{i=1}^sF_i|$. 
From this, we can recover the following sufficient condition for (weak) linear equivalence of the ideal varieties of $M$. 
\begin{proposition}
    The ideal varieties of $\mathfrak F' = \langle F_1,\ldots, F_s\rangle$ and $\mathfrak F'' = \langle G_1,\ldots, G_t\rangle$ in the poset $P_M$ are linearly equivalent up to a linear transformation of $\mathbb R^{\mathfrak F'}$ if the ideals $\mathfrak F'$ and $\mathfrak F''$ are isomorphic as subposets of $P_M$.   
\end{proposition}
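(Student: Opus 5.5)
We reduce weak linear equivalence to comparing the row spaces of the two coefficient matrices, after matching up their coordinates via the poset isomorphism. Write $\varphi' = \varphi_{M,\mathfrak F'}$ and $\varphi'' = \varphi_{M,\mathfrak F''}$, and let $\sigma:\mathfrak F'\to \mathfrak F''$ be a poset isomorphism. The coordinates of $\mathbb R^{\mathfrak F'}$ and $\mathbb R^{\mathfrak F''}$ are indexed by flats, so $\sigma$ induces a permutation-type linear isomorphism $L_\sigma:\mathbb R^{\mathfrak F'}\to\mathbb R^{\mathfrak F''}$ with $b_F\mapsto b_{\sigma(F)}$. Our goal is to show that $L_\sigma$ carries the linear span of $\varphi'(\Theta)$ onto that of $\varphi''(\Theta)$. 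By Theorem~\ref{thm: min lin subspace}, it suffices to show that $L_\sigma$ (or its transpose acting on linear forms) carries $\ker(M_{\varphi'}^t)$ onto $\ker(M_{\varphi''}^t)$. Each monomial $\prod_{i\in e}\theta_i$ is squarefree with its support determined by $e$, so $I_{\mathcal T}$ is the toric ideal of a set of distinct monomials. Homogeneity holds, for instance, in the graphic case where all monomials have degree $2$. Otherwise we argue directly, as below: the kernel of $M^t$ describes linear forms vanishing on the span because the distinct monomials $\theta^e$ are linearly independent functions on an open set. This holds regardless of grading, since a linear form $\alpha^t x$ vanishes on $\varphi(\Theta)$ exactly when $\alpha^tM_\varphi\mathbb T\equiv 0$, that is, when $M_\varphi^t\alpha=0$.

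The key step is to describe $\ker(M_{\varphi'}^t)$ purely in terms of the subposet $\mathfrak F'$, using the discussion preceding the proposition. Rows indexed by singleton flats $\{e\}$ are standard basis vectors in $\mathbb R^{\mathcal T}$. Every other row $c_F$ equals $\sum_{\{e\}\preceq F} c_{\{e\}}$, because $F$ is the disjoint union of the atoms below it. So a vector $\alpha\in\mathbb R^{\mathfrak F'}$ lies in $\ker(M_{\varphi'}^t)$ if and only if $\sum_F \alpha_F c_F=0$. Expanding, this says that for each atom $\{e\}$ in $\mathfrak F'$,
\[
\sum_{F\in\mathfrak F',\ F\succeq\{e\}}\alpha_F=0 .
\]
This is a condition stated entirely in terms of the order relation of $\mathfrak F'$ and its set of minimal nonempty elements. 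Here we use that the flats are nonempty, so the minimal elements of the ideal $\mathfrak F'$ are exactly the atoms (singleton flats) lying below the $F_i$. Equivalently, $\ker(M_{\varphi'}^t)$ is the orthogonal complement of the span of the vectors $\sum_{F\succeq a}b_F$, where $a$ ranges over the minimal elements of $\mathfrak F'$.

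A poset isomorphism $\sigma$ preserves minimal elements and up-sets, so $L_\sigma$ maps $\sum_{F\succeq a}b_F$ to $\sum_{G\succeq\sigma(a)}b_G$. Hence $L_\sigma$ maps the spanning set for $\ker(M_{\varphi'}^t)^\perp$ onto the one for $\ker(M_{\varphi''}^t)^\perp$. Since $L_\sigma$ is orthogonal, the kernels correspond, and so the linear spans correspond under $L_\sigma$ (Corollary~\ref{cor: lin equiv reduction} after relabelling coordinates). The main obstacle is the middle step. We must make sure that the rows of $M_\varphi$ decompose exactly as sums of atom rows, with no multiplicities and no shared monomials between distinct atoms. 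This requires that distinct ground-set elements $e$ give distinct monomials, which holds because $e\mapsto\prod_{i\in e}\theta_i$ is injective on subsets of $V$. We also need that minimal elements of $\mathfrak F'$ really are atoms of $P_M$, which rests on the empty flat being excluded.
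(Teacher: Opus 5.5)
Your proposal is correct and follows essentially the same route as the paper: the poset isomorphism induces a coordinate permutation $\mathbb R^{\mathfrak F'}\cong\mathbb R^{\mathfrak F''}$ that carries $\ker(M_{\varphi'}^t)$ onto $\ker(M_{\varphi''}^t)$. The differences are only in presentation. You describe this kernel as the orthogonal complement of the up-set vectors $\sum_{F\succeq a}b_F$ (the columns of $M_{\varphi'}$), whereas the paper uses the spanning vectors $b_F-\sum_{\{e\}\preceq F}b_{\{e\}}$. You also bypass the homogeneity hypothesis of Theorem~\ref{thm: min lin subspace} by using the linear independence of distinct monomials on an open set.
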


\begin{proof}
    If $\mathfrak F'$ and $\mathfrak F''$ are isomorphic subposets of $P_M$ then the isomorphism induces an invertible linear transformation between $\mathbb R^{\mathfrak F'}$ and $\mathbb R^{\mathfrak F''}$ that takes the kernel vectors described above to their counterparts in $\mathbb R^{\mathfrak F''}$.  
    Hence, up to this linear change of coordinates, the varieties span the same linear subspace of $\mathbb R^{\mathfrak F'}\simeq \mathbb R^{\mathfrak F''}$. 
\end{proof}

The solution to the strong version (as stated in Problem~\ref{prob: linear equivalence}) of linear equivalence for ideal varieties is clear, since each variety lives in a different ambient space. 
However, a full characterization of weak linear equivalence for the ideal varieties  $\mathcal F_{M,ideal}$ may require using properties specific to the matroid $M$.  
This would be nice to see in, for example, the case of graphic matroids. 
Similarly, solving the linear equivalence problem for the more general family $\mathcal F_M = \{\mathcal V_{M, \mathfrak F'} : \mathfrak F'\subseteq \mathfrak F\}$ would also be nice.

\subsection{The matrix \texorpdfstring{\({M_\varphi}\)}{} for colored DAG models}
By Definition~\ref{def: trek polynomial}, for a colored DAG $(\GG,c)$ the trek monomials $m_T$ are monomials in the polynomial ring $\R[\lambda_1,\ldots,\lambda_e, \omega_1,\ldots, \omega_n]$, and $\mathcal{T} = \{m_T : T\in\mathcal{T}(i,j), \, i,j\in[m]\}$ is the set of all trek monomials obtained from $(\GG,c)$. 
Let $(\GG,c)$ be a colored DAG with $\GG =([m],E)$. 
For $i,j\in [m]$, define the row vector $t(i,j)\in \R^{\mathcal{T}}$ where 
\[
t(i,j)_\tau = |\{ T\in \mathcal{T}(i,j) : m_T = \tau\}|
\]
is the number of treks between $i$ and $j$ in $\GG$ having trek monomial $\tau$. These vectors are the rows of $M_{\varphi_{\GG,c}}$.

The formula in~\eqref{eqn: map decomp} decomposes the map $\varphi_{\GG,c}$ from Definition~\ref{def: colored DAG model} as $\varphi_{\GG,c}(\theta) = M_{\varphi_{\GG,c}}\mathbb T$. 
Since every trek has exactly one top vertex, each trek monomial $m_T$ is divisible by exactly one $\omega_i$ with degree $1$. 
Geometrically, this means the exponent vectors of the trek monomials $m_T$ span the affine hyperplane $\omega_1 + \cdots + \omega_n = 1$ in $\R^{n + e}$. 
Consequently, any binomial relation $\prod \rho_T^{u_T} - \prod \rho_T^{v_T} = 0$ in the pullback forces $\sum u_T = \sum v_T$, ensuring that $I_{\mathcal{T}}$ is a homogeneous toric ideal under the standard grading.
Hence, the conditions of Theorem~\ref{thm: min lin subspace} are fulfilled. 
In particular, we have the following result. 

\begin{corollary}
    \label{cor: colored DAG lin equiv reduction}
    The linear span of a colored DAG model $\mathcal M(\GG,c)$ is given by the linear forms 
    $
    \{\sum_{1\leq i\leq j\leq m}\alpha_{ij}\sigma_{ij} : (\alpha_{ij} : 1\leq i\leq j \leq m)\in \ker(M_{\varphi_{\GG,c}}^t)\}. 
    $
    Moreover, two colored DAGs $(\GG,c)$ and $(\HH,c')$ are linearly equivalent if and only if $\ker(M_{\varphi_{\GG,c}}^t) = \ker(M_{\varphi_{\HH, c'}}^t)$. 
\end{corollary}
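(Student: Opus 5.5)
The plan is to deduce Corollary~\ref{cor: colored DAG lin equiv reduction} directly from Theorem~\ref{thm: min lin subspace} and Corollary~\ref{cor: lin equiv reduction}. The only hypotheses to verify are that the parameter domain $\Theta = \mathbb R^{e}\times\mathbb R^n_{>0}$ contains an open subset of $\mathbb R^{e+n}$, that the coordinate functions of $\varphi_{\GG,c}$ lie in $\mathbb Z_{\geq 0}[\lambda,\omega]$, and that $I_{\mathcal T}$ is homogeneous for the standard grading.

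\textbf{Hypotheses.} The first is immediate, since $\Theta$ is itself open. The second holds by definition~\eqref{eqn: trek polynomial}, because each $p_{i,j}$ is a sum of trek monomials with multiplicity. For homogeneity, I would argue as in the paragraph preceding the statement. Each trek $T$ has a unique top, so the exponent vector of $m_T$ has $\omega$-coordinates summing to exactly $1$. Consequently the linear functional $\ell(\alpha)=\sum_k \alpha_{\omega_k}$ takes the value $1$ on every element of $\mathcal T$.

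Now let $\rho^u-\rho^v\in I_{\mathcal T}$ be a binomial; toric ideals are generated by such binomials. Then $\sum_\tau u_\tau \tau = \sum_\tau v_\tau \tau$ as exponent vectors. Applying $\ell$ gives $\sum u_\tau = \sum v_\tau$, so every binomial generator is homogeneous. Hence $I_{\mathcal T}$ is homogeneous in the standard grading.

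\textbf{Identification of coordinates.} Before invoking Theorem~\ref{thm: min lin subspace}, one point needs care. The map lands in symmetric matrices, and we identify it with $\mathbb R^{\binom{m+1}{2}}$ via the coordinates $\sigma_{ij}$ with $i\leq j$. Since $\mathcal T(i,j)=\mathcal T(j,i)$, the rows $t(i,j)$ and $t(j,i)$ coincide. I would therefore take $M_{\varphi_{\GG,c}}$ to have rows $t(i,j)$ for $i\leq j$, so that it matches the reduced map $\theta\mapsto(p_{i,j})_{i\leq j}$.

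This identification is the one place where I would be careful. If instead one used all $m^2$ rows, the kernel of the transpose would pick up trivial vectors $e_{ij}-e_{ji}$. These correspond to the linear forms $\sigma_{ij}-\sigma_{ji}$, which express symmetry and are not part of the statement.

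\textbf{Conclusion.} With this indexing, Theorem~\ref{thm: min lin subspace} gives the first claim: the linear span of $\mathcal M(\GG,c)$ is cut out by the forms $\sum_{i\le j}\alpha_{ij}\sigma_{ij}$ with $\alpha\in\ker(M_{\varphi_{\GG,c}}^t)$. Applying Corollary~\ref{cor: lin equiv reduction} to the two models $\mathcal M(\GG,c)$ and $\mathcal M(\HH,c')$ gives the second claim. Both models live in the same ambient space $\mathbb R^{\binom{m+1}{2}}$, and both satisfy the hypotheses by the argument above.
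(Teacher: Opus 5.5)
Your proposal is correct and follows essentially the same route as the paper. Like the paper, you verify the hypotheses of Theorem~\ref{thm: min lin subspace}, getting homogeneity of $I_{\mathcal T}$ from the fact that every trek monomial has exactly one $\omega$-factor, of degree one, so the $\omega$-degree functional forces $\sum u_\tau=\sum v_\tau$ on binomials; you then read off both claims from Theorem~\ref{thm: min lin subspace} and Corollary~\ref{cor: lin equiv reduction}, and your remark on indexing the rows by $i\le j$ matches the paper's identification with $\mathbb R^{\binom{m+1}{2}}$ (cf.\ Example~\ref{ex: motivation}).
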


Familiar geometry appears when the coloring is the constant coloring $c^\ast$. 

\begin{remark}[The rational normal curve and constant colorings] 
    \label{ex: rational normal curve} 
    Let $(\GG,c^\ast)$ be a colored DAG with the constant coloring. 
    It follows from Lemma~\ref{lem: degree closure} that the toric variety $\mathcal V_{\mathcal{T}}$ is the rational normal curve of degree $N$ where $N$ is the length of the longest trek in $\GG$. 
    In particular, by~\eqref{eqn: map decomp} the colored DAG model $(\GG, c^\ast)$ is a linear transformation of the rational normal curve of degree $N$. 
\end{remark}

We illustrate Corollary~\ref{cor: colored DAG lin equiv reduction} with a simple example.

\begin{example}
\label{ex: motivation}
Let $\GG = ([3], E)$ be a graph with $E=\{1\rightarrow2, 2\rightarrow3\}$, and $\HH = ([3], E')$ be a graph with $E'=\{3\rightarrow2, 2\rightarrow1\}$.  
Consider the colored DAGs $(\GG,c^\ast)$ and $(\HH,c^\ast)$, each equipped with the constant coloring $c^\ast$.
The trek polynomials for both graphs live in the ring with two variables $\R[\lambda,\omega]$, and $\mathcal{T}_{\GG} = \mathcal{T}_{\HH}= \{\omega, \omega\lambda, \omega\lambda^2,\omega\lambda^3, \omega\lambda^4\}$.
It is easy to see that
\begin{equation*}
    M_{\varphi_{\GG,c^\ast}} = \begin{pmatrix}
        1 & 0 & 0 & 0 & 0\\
        0 & 1 & 0 & 0 & 0 \\
        1 & 0 & 1 & 0 & 0\\
        0 & 0 & 1 & 0 & 0 \\
        0 & 1 & 0 & 1 & 0\\
        1 & 0 & 1 & 0 & 1
    \end{pmatrix}, \quad
    M_{\varphi_{\HH,c^\ast}} = \begin{pmatrix}
        1 & 0 & 1 & 0 & 1\\
        0 & 1 & 0 & 1 & 0\\
        1 & 0 & 1 & 0 & 0\\
        0 & 0 & 1 & 0 & 0 \\
        0 & 1 & 0 & 0 & 0 \\
        1 & 0 & 0 & 0 & 0
    \end{pmatrix}
\end{equation*}
where rows are given by $t(1, 1), t(1, 2), t(2, 2), t(1, 3), t(2, 3), t(3, 3)$. Notice that $\ker(M_{\varphi_{\GG, c^\ast}}^t)\neq \ker(M_{\varphi_{\HH, c^\ast}}^t)$ with $( 1, 0, -1, 1, 0, 0)^t$ belonging to the first kernel, but not the second. 
This implies that we can distinguish the models with the linear form $f(\Sigma) = \sigma_{11}-\sigma_{22}+ \sigma_{13}$.
\end{example}

Corollary~\ref{cor: colored DAG lin equiv reduction} motivates the computation of a basis for ${M}_{\varphi_{\GG,c}}^t$ for general colored DAGs $(\GG,c)$. 
Computing these kernels for the constant coloring is a good starting point since they already provide a certificate of model distinguishability for colored DAGs. 

\begin{proposition}
    \label{prop: sufficient condition}
    Let $\mathcal{M}(\GG,c)$ and $\mathcal{M}(\HH,c')$ be colored DAGs models of the same dimension.   
    Suppose that $f = \sum\alpha_{ij}\sigma_{ij}$ is a linear form belonging to $I_{\GG, c}$ such that $\alpha =(\alpha_{ij} : 1\leq i \leq j\leq m)\notin\ker(M_{\varphi_{\HH,c^\ast}}^t)$. 
    Then $\mathcal{M}(\GG,c)\neq \mathcal{M}(\HH, c')$. 
\end{proposition}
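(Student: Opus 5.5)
The plan is to show that the linear form $f$ cannot vanish on $\mathcal{M}(\HH,c')$. Since $f\in I_{\GG,c}$ vanishes on $\mathcal{M}(\GG,c)$, this separates the two models. The key observation is that the constant coloring $c^\ast$ coarsens every coloring $c'$, so the constantly colored model sits inside every colored model on the same DAG:
\[
\mathcal{M}(\HH,c^\ast)\subseteq \mathcal{M}(\HH,c').
\]
A linear form vanishing on the larger set must therefore vanish on the smaller one. Corollary~\ref{cor: colored DAG lin equiv reduction} then identifies the linear forms vanishing on $\mathcal{M}(\HH,c^\ast)$ with vectors in $\ker(M_{\varphi_{\HH,c^\ast}}^t)$.

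First I verify the inclusion. Write $c'=c'_{n,e}$ and take any $(\lambda,\omega)\in\mathbb{R}\times\mathbb{R}_{>0}$. Specialize the parameters of $\varphi_{\HH,c'}$ by setting $\lambda_k=\lambda$ for all $k\in[e]$ and $\omega_k=\omega$ for all $k\in[n]$; this point lies in $\mathbb{R}^e\times\mathbb{R}^n_{>0}$. By Definition~\ref{def: trek polynomial}, each trek monomial $m_T$ for $(\HH,c')$ becomes $\omega\lambda^{|P|+|Q|}$, which is exactly the trek monomial of $T$ for $(\HH,c^\ast)$. Hence $p^{(\HH,c')}_{i,j}$ specializes to $p^{(\HH,c^\ast)}_{i,j}(\lambda,\omega)$, and so $\varphi_{\HH,c^\ast}(\lambda,\omega)\in\mathcal{M}(\HH,c')$.

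Second, I argue by contradiction. Suppose $\mathcal{M}(\GG,c)=\mathcal{M}(\HH,c')$. Then $f$ vanishes on $\mathcal{M}(\HH,c')$, and hence on $\mathcal{M}(\HH,c^\ast)$, so $f$ is a linear form vanishing on $\varphi_{\HH,c^\ast}(\mathbb{R}\times\mathbb{R}_{>0})$.

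The parameter set $\mathbb{R}\times\mathbb{R}_{>0}$ is open in $\mathbb{R}^2$. The toric ideal of the trek monomials is homogeneous, as argued just before Corollary~\ref{cor: colored DAG lin equiv reduction}; here all monomials have $\omega$-degree $1$. So Theorem~\ref{thm: min lin subspace}, in the form of Corollary~\ref{cor: colored DAG lin equiv reduction}, applies to $(\HH,c^\ast)$. Concretely, its proof gives $M_{\varphi_{\HH,c^\ast}}^t\alpha=0$. This contradicts the hypothesis $\alpha\notin\ker(M_{\varphi_{\HH,c^\ast}}^t)$, so the models are distinct.

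The only step needing care is the specialization in the first step. One must check that every trek monomial of the coarser coloring arises from the finer one under the substitution, with the same multiplicities. This holds because both are indexed by the same set of treks $\mathcal{T}(i,j)$ in $\HH$. The equal-dimension hypothesis is not actually needed for this argument; I would remark that it is included only for context.
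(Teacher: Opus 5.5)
Your proof is correct and follows the paper's argument. The paper likewise combines the inclusion $\mathcal{M}(\HH,c^\ast)\subseteq\mathcal{M}(\HH,c')$ with Theorem~\ref{thm: min lin subspace} to conclude that $f$ does not vanish on $\mathcal{M}(\HH,c')$. Your specialization argument verifying that inclusion (which the paper asserts without proof) and your remark that the equal-dimension hypothesis is never used are both accurate.
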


\begin{proof}
    Since $\alpha\notin \ker(M_{\varphi_{\HH,c^\ast}}^t)$, it follows from Theorem~\ref{thm: min lin subspace} that there exists $\Sigma\in \mathcal{M}(\HH, c^\ast)\subseteq \mathcal{M}(\HH, c')$ such that $f(\Sigma)\neq 0$. 
    Hence, $f\notin I_{\HH, c'}$. 
    Since $f\in I_{\GG, c}$, it follows that $I_{\GG, c} \neq I_{\HH, c'}$. 
\end{proof}

\section{Poset parametrizations}
\label{sec: posets}
Theorem~\ref{thm: min lin subspace} and Corollary~\ref{cor: lin equiv reduction} motivate computing a basis of the kernel of the matrix $M_{\varphi}^t$ for a rational map $\varphi$ as in~\eqref{eqn: rational map} with $f_1,\ldots, f_m\in\mathbb{Z}_{\geq 0}[\theta_1,\ldots, \theta_n]$. 
To solve the problems in the introduction, we would like to obtain a closed-form expression for this basis, and we would like to do this simultaneously for all maps in a family $\mathcal F= \{\varphi_i(\Theta_i): \Theta_i\in \mathbb R^{n_i}, \varphi: \Theta_i\to \mathbb R^{m_i}, i\in\mathbb{Z}_{\geq0}\}$. 
Using the assumption on the coefficients of $f_1,\ldots, f_m$, this section provides a combinatorial theory for doing this using a Möbius inversion formula that holds for an associated poset $P_\varphi$. 
In Subsection~\ref{subsec: the poset}, we define the poset $P_{\varphi}$, and derive the relevant Möbius inversion formula. 
In Subsection~\ref{subsec: poset invariants}, we define the family of polynomials in $I_{\varphi}$, called $P_{\varphi}$-invariants, that are obtained via the Möbius inversion formula. 
Subsection~\ref{subsec: the ideal of the poset} describes a canonical reparametrization $\hat\varphi: \Theta\to \mathbb R^{P_\varphi}$ of $\varphi(\Theta)$ into a (generally higher dimensional) space $\mathbb R^{P_\varphi}$ and makes explicit the relationship between the vanishing ideals $I_\varphi$ and $I_{\widetilde\varphi}$. 
This allows us to formalize how the linear equivalence problem (Problem~\ref{prob: linear equivalence}), toric reparameterization problem (Problem~\ref{prob: toric}), and implicitization problem (Problem~\ref{prob: colored implicitization}) can be treated by deducing properties of $P_{\varphi}$. 
General results describing the reduction of these problems to combinatorics are presented in Subsections~\ref{subsec: combinatorial linear equiv},~\ref{subsec: combinatorial toric} and~\ref{rem: toric implicitization}.
A related connection between finite topologies and the condition $\varphi = \widetilde\varphi$ is presented in Subsection~\ref{subsec: pi-graphs}.

\subsection{The poset \texorpdfstring{\(P_{\varphi}\)}{}}
\label{subsec: the poset}

In the following, we let $\varphi: \Theta \to \mathbb R^m$ be a rational map $\varphi$ as in~\eqref{eqn: rational map} with $f_1,\ldots, f_m\in\mathbb{Z}_{\geq 0}[\theta_1,\ldots, \theta_n]$. 
We let $\mathcal{T}\subset\R[\theta_1,\ldots, \theta_n]$ denote the set of monomials supporting $f_1,\ldots, f_m$ (see~\eqref{eqn: support set}), and $\{c_1,\ldots, c_m\}\subseteq \mathbb Z_{\geq 0}^{\mathcal T}$ denote the coefficient vectors of $f_1,\ldots, f_m$, i.e., 
\[
f_i(\theta) = \sum_{\tau\in\mathcal T}c_{i,\tau}\tau.
\]
\begin{definition}[The poset $P_\varphi$]
For $\{{i_1},\ldots, {i_N}\}$ a nonempty subset of $[m]$ define the \emph{meet} of $c_{i_1},\ldots, c_{i_N}$ as 
\[
\bigwedge_{\ell \in[N]}c_{i_\ell} = (\textrm{min}(c_{i_1,\tau},\ldots, c_{i_N,\tau}) : \tau\in \mathcal{T}).
\]
Let $P_{\varphi}$ be the poset on the set of all meets $\bigwedge_{\ell \in[N]}c_{i_\ell}$ ordered by coordinate-wise domination, i.e., $s' \prec s$ if and only if $s'_\tau \leq s_\tau$ for all $\tau \in \mathcal{T}$.
\end{definition}
The following result is not difficult to show.

\begin{proposition}
\label{prop: min elt}
    The poset $P_\varphi$ is a meet semilattice with a unique minimal element $\hat0\in \mathbb Z^{\mathcal T}_{\geq 0}$. 
    Moreover, $\hat 0 = 0$ if and only if no monomial $\tau\in \mathcal T$ has nonzero coefficient in all coordinate functions $f_1, \ldots, f_m$.
\end{proposition}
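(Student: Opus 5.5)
The plan is to verify directly from the definition of $P_\varphi$ as the set of all meets $\bigwedge_{\ell\in[N]} c_{i_\ell}$ over nonempty subsets $\{i_1,\ldots,i_N\}\subseteq[m]$, ordered by coordinate-wise domination. First I note that coordinate-wise domination is a partial order on $\mathbb Z_{\geq 0}^{\mathcal T}$, so $P_\varphi$ is a finite poset. Next I show that $P_\varphi$ is closed under the coordinate-wise minimum. If $s=\bigwedge_{i\in A}c_i$ and $s'=\bigwedge_{i\in B}c_i$ for nonempty $A,B\subseteq[m]$, then coordinate-wise $\min(s_\tau,s'_\tau)=\min_{i\in A\cup B}c_{i,\tau}$. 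Hence $\min(s,s')=\bigwedge_{i\in A\cup B}c_i\in P_\varphi$.

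This coordinate-wise minimum is a lower bound of both $s$ and $s'$. Any $u\in P_\varphi$ with $u\preceq s$ and $u\preceq s'$ satisfies $u_\tau\le\min(s_\tau,s'_\tau)$ for every $\tau$. So it is the greatest lower bound in $P_\varphi$, and $P_\varphi$ is a meet semilattice.

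For the unique minimal element, I take $\hat 0=\bigwedge_{i\in[m]}c_i$, which lies in $P_\varphi$ since $[m]$ is nonempty. For any element $\bigwedge_{i\in A}c_i$, the minimum over $[m]\supseteq A$ is at most the minimum over $A$ in each coordinate, so $\hat 0\preceq s$ for all $s\in P_\varphi$. Hence $\hat 0$ is the unique minimum, and it lies in $\mathbb Z_{\geq 0}^{\mathcal T}$ because each $c_i$ does.

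Finally, $\hat 0_\tau=\min_i c_{i,\tau}$ is nonzero exactly when $c_{i,\tau}\ge 1$ for all $i$, that is, when $\tau$ has nonzero coefficient in every $f_i$. So $\hat 0=0$ if and only if no $\tau\in\mathcal T$ appears in all of $f_1,\ldots,f_m$. There is no real obstacle here. The only point needing care is that the meet must be taken inside $P_\varphi$ rather than in $\mathbb Z^{\mathcal T}$, which is exactly why the closure under unions of index sets is proved first.
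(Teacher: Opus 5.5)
Your proof is correct and complete. The paper gives no proof of this proposition (it only remarks that the result is ``not difficult to show''), so there is nothing to compare against, and your direct verification from the definition is the natural argument.

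Your proof actually establishes slightly more than the statement. It shows that the meet of two elements of $P_\varphi$ is their coordinate-wise minimum, $\bigl(\bigwedge_{i\in A}c_i\bigr)\wedge\bigl(\bigwedge_{i\in B}c_i\bigr)=\bigwedge_{i\in A\cup B}c_i$, rather than just that some greatest lower bound exists. The paper later relies on this stronger fact without comment in the proof of Proposition~\ref{prop: poset sum}. There it computes $(s^{(j_t)}\wedge s^{(j_k)})_\tau=\min(s^{(j_t)}_\tau,s^{(j_k)}_\tau)$, and it forms the meets $s^{(j)}$ of arbitrary finite subsets of $P_\varphi$.
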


For $s^{(1)},\ldots, s^{(\ell)}\in P_{\varphi}$ we let $s^{(1)}\vee \cdots \vee s^{(\ell)} = (\max_{i\in[\ell]}(s_\tau^{(i)}) : \tau \in \mathcal{T})$.
Note that $s^{(1)}\vee \cdots \vee s^{(\ell)}$ need not belong to $P_{\varphi}$.  
However, if it does, then it is the join of $s$ and $s'$.
\begin{proposition}
\label{prop: poset sum}
    Define the function $r(s) = s - \bigvee_{s'\prec s}s'$ for $s\in P_{\varphi}$.
    Then 
    \[
    s = \sum_{s'\preceq s}r(s').
    \]
\end{proposition}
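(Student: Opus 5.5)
The plan is to verify the identity one coordinate at a time. Fix $\tau\in\mathcal T$; we must show $s_\tau=\sum_{t\preceq s} r(t)_\tau$. Throughout, the join over the empty set is read as the zero vector, so $r(\hat 0)=\hat 0$. Since every $t'\prec t$ satisfies $t'_\tau\le t_\tau$, we have
\[
r(t)_\tau = t_\tau-\max_{t'\prec t} t'_\tau\ \ge 0
\]
(with the max over the empty set equal to $0$). Let $D_\tau=\{t\in P_\varphi : r(t)_\tau>0\}$. Equivalently, $t\in D_\tau$ exactly when every $t'\prec t$ has $t'_\tau<t_\tau$ and $t_\tau>0$.

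First I will show that $D_\tau$ is a chain, and this is where the meet-semilattice structure (Proposition~\ref{prop: min elt}) enters. Suppose $t_1,t_2\in D_\tau$ are incomparable. Their meet $u=t_1\wedge t_2$ is again a coordinatewise minimum of coefficient vectors, so it lies in $P_\varphi$. Moreover $u\prec t_1$ and $u\prec t_2$ strictly. Hence $u_\tau<t_{1,\tau}$ and $u_\tau<t_{2,\tau}$, which contradicts $u_\tau=\min(t_{1,\tau},t_{2,\tau})$. Consequently the set $\{t\in D_\tau : t\preceq s\}$ is a chain $d_1\prec d_2\prec\cdots\prec d_k$.

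Second, I will prove by induction over the finite poset (on the number of elements below $u$) the following claim: for every $u\in P_\varphi$,
\[
u_\tau=\max\{d_\tau : d\in D_\tau,\ d\preceq u\},
\]
with the value $0$ if no such $d$ exists. The inductive step splits into two cases. If $u\in D_\tau$, the claim is immediate, since $u$ itself attains the max and every $d\preceq u$ has $d_\tau\le u_\tau$. If $u\notin D_\tau$, then $u_\tau=\max_{u'\prec u}u'_\tau$. We apply the induction hypothesis to each $u'\prec u$, and note that $D_\tau$-elements below some $u'\prec u$ are exactly the $D_\tau$-elements strictly below $u$. This gives the claim for $u$.

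Applying the claim to $s$ gives $s_\tau=d_{k,\tau}$, or $0$ if $k=0$. Applying it to each $t'\prec d_j$ gives $\max_{t'\prec d_j}t'_\tau=d_{j-1,\tau}$ (with $d_{0,\tau}:=0$), because the $D_\tau$-elements strictly below $d_j$ are $d_1,\dots,d_{j-1}$. Hence $r(d_j)_\tau=d_{j,\tau}-d_{j-1,\tau}$. All other $t\preceq s$ have $r(t)_\tau=0$, so the sum telescopes to $d_{k,\tau}=s_\tau$.

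The main obstacle is the chain property of $D_\tau$. Without it, contributions from incomparable elements could double count, and the telescoping would fail. The key point to handle carefully is that closure of $P_\varphi$ under meets is exactly what is needed to rule this out.
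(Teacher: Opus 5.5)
Your proof is correct and follows essentially the same approach as the paper's. Both arguments work one coordinate $\tau$ at a time, use closure of $P_\varphi$ under meets to show that the elements below $s$ with nonzero $\tau$-residue form a chain, and then telescope. You identify those contributors directly as $D_\tau$ and prove the max formula by induction, whereas the paper takes the minimal elements $s^{(j)}$ at each $\tau$-level. Your convention that the empty join is $0$, so that $r(\hat 0)=\hat 0$, also handles the case $\hat 0\neq 0$ uniformly, where the paper only says that case is ``analogous''.
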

\begin{proof}
    Suppose first that $\hat 0 = 0$ in $P_\varphi$, and fix $\tau\in \mathcal{T}$.  We will show 
    \[
    s_\tau = \sum_{s'\preceq s}r(s')_\tau. 
    \]
    Note that $s_\tau = \max_{s'\preceq s}(s'_\tau)$, and $0 = \min_{s'\preceq s}(s_\tau')$ since $0\preceq s$ by Proposition~\ref{prop: min elt}.
    For $0\leq j \leq s_\tau$, define the smallest element with $\tau$-th coordinate equal to $j$:
    \[
    s^{(j)} = \bigwedge_{s'\preceq s : s_\tau' = j}s'.
    \]
    We claim that $r(u)_\tau =0$ for all $u \neq s^{(j)}$ with $u_\tau = j$.
    To see this, choose $u\preceq s$ with $u_\tau = j$ and $u \neq s^{(j)}$. 
    By definition of $s^{(j)}$ we have that $s^{(j)}\prec u$. 
    Because $s^{(j)} \prec u$ and the poset is ordered by coordinate-wise domination, the maximum $\tau$-th coordinate among all elements strictly preceding $u$ is exactly achieved by $s^{(j)}$.
    Hence, 
    \[
    r(u)_\tau = u_\tau - \bigvee_{s'\prec u}s'_\tau = u_\tau - s_\tau^{(j)} = j - j = 0. 
    \]
    Since $j$ was arbitrary, the only $s'\preceq s$ with $r(s')_\tau$ possibly not equal to $0$ are $s^{(0)},\ldots, s^{(s_\tau)}$. 
    We now compute the values $r\left(s^{(0)}\right)_\tau,...,r\left(s^{(s_\tau)}\right)_\tau$.

    We first note that $s^{(j)}$ need not exist for all $0\leq j\leq s_\tau$.
    Specifically, it is possible that no $s'\preceq s$ has $s_\tau' = j$. 
    However, we know that $s^{(0)}$ and $s^{(s_\tau)}$ exist. 

    Let $0=j_0< j_1<\cdots < j_\ell = s_\tau$ denote the indices for which $s^{(j_i)}$ exists. 
    We claim that
    \begin{equation}
    \label{eqn:fvals}
    \begin{split}
    r\left(s^{(0)}\right)_\tau &= 0, \qquad \textrm{and}\\
    r\left(s^{(j_i)}\right)_\tau &= j_i - j_{i-1} \qquad \textrm{for all $i= 1,\ldots, \ell$.}
    \end{split}
    \end{equation}
    To see this, we first show that $s^{(0)} \prec s^{(j_1)}\prec \cdots \prec s^{(j_{\ell-1})} \prec s^{(s_\tau)}$. 
    For the sake of contradiction, suppose there exist $0 \leq t < k \leq s_\tau$ such that $s^{(j_t)}$ and $s^{(j_k)}$ are incomparable in the poset $P_{\varphi}$. 
    Since $P_{\varphi}$ is a meet semilattice then $s^{(j_t)}\wedge s^{(j_k)}\in P_{\varphi}$ with $s^{(j_t)}\wedge s^{(j_k)}\preceq s^{(j_t)}, s^{(j_k)}\preceq s$.
    Moreover, 
    \[
    \left(s^{(j_t)}\wedge s^{(j_k)}\right)_\tau = \min\left(s^{(j_t)}_\tau, s^{(j_k)}_\tau\right) = s^{(j_t)}_\tau = j_t. 
    \]
    So it must be that $s^{(j_t)} = s^{(j_t)}\wedge s^{(j_k)}$ (by definition of $s^{(j)}$). 

    We can now see~\eqref{eqn:fvals} holds. 
    Namely, since $s^{(j_{i-1})}\prec s^{(j_i)}$, and there is no $s'\prec s^{(j_i)}$ with $s_\tau'\geq j_{i-1}+1$, we obtain $r(s^{(j_i)})_\tau = j_i - j_{i-1}$.
    The fact that $r(s^{(0)})_\tau = 0$ is also clear since no $s'\prec s^{(0)}$ can have $s_\tau'>0$. 

    Since \eqref{eqn:fvals} holds, and $r(s')_\tau = 0$ for all $s'\notin\{s^{(j_1)},\ldots,s^{(j_{\ell})}\}$, we obtain
    \[
    \sum_{s'\preceq s}r(s')_\tau = \sum_{i=1}^\ell r(s^{(j_i)})_\tau = \sum_{i=1}^\ell (j_i - j_{i-1}) = s_\tau,
    \]
    which completes the proof. Finally, in case $\hat 0 \neq 0$, the proof is analogous to the discussion above, the only difference being the defining $s^{(j)}$ for $\hat{0}_\tau \leq j\leq s_\tau$ as there is no element $s'$ with $s'_{\tau}< \hat{0}_\tau$.
\end{proof}

We now define two functions that map the elements of the poset $P_{\varphi}$ to specific polynomials in $\R[\theta_1,\ldots, \theta_n]$. Using the column vector of monomials $\mathbb{T}$ defined in Section~\ref{subsec: minimal linear subspace}, let $f, g: P_{\varphi} \to \R[\theta_1,\ldots, \theta_n]$ be defined via the dot product:
\begin{equation}
\label{eqn: mobius functions}
    f(s) = r(s) \mathbb{T},\qquad
    g(s) = s\mathbb{T}.
\end{equation} 
We also let $\mu(s',s)$ denote the \emph{Möbius function} of the poset $P_{\varphi}$ which is defined recursively as
\begin{equation}
    \label{eqn: mobius_definition}
    \begin{split}
        \mu(s,s) &= 1, \quad \textrm{for all } s\in P_{\varphi}\\
        \mu(s,u) & = - \sum_{s\preceq t\prec u}\mu(s,t) \quad \textrm{for all } s \prec u \textrm{ in } P_{\varphi}. 
    \end{split}
\end{equation}

We then have the following theorem via the M\"obius inversion formula \cite[Proposition~3.7.1]{stanley2011enumerative}. 
\begin{theorem}
    \label{thm: mobius}
    For all $s\in P_{\varphi}$
    \begin{equation}
    \label{eqn:mobius}
    f(s) = \sum_{s'\preceq s}\mu(s',s)g(s').
    \end{equation}
\end{theorem}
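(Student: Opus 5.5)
The plan is to derive the identity directly from Proposition~\ref{prop: poset sum} and the Möbius inversion formula, with linearity of the dot product with $\mathbb{T}$ as the bridge.

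\emph{Step 1: transfer Proposition~\ref{prop: poset sum} to $f$ and $g$.} Proposition~\ref{prop: poset sum} says that for every $s\in P_\varphi$ we have the vector identity $s=\sum_{s'\preceq s} r(s')$ in $\mathbb Z^{\mathcal T}$. Take the dot product of both sides with the column vector of monomials $\mathbb{T}$. The map $v\mapsto v\mathbb{T}$ is linear from $\mathbb R^{\mathcal T}$ to $\R[\theta_1,\ldots,\theta_n]$, so by the definitions in~\eqref{eqn: mobius functions} this gives
\[
g(s)=s\mathbb{T}=\sum_{s'\preceq s} r(s')\mathbb{T}=\sum_{s'\preceq s} f(s')\qquad\text{for all } s\in P_\varphi .
\]
This is the summation form of Möbius inversion ``from below''.

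\emph{Step 2: apply Möbius inversion.} The poset $P_\varphi$ is finite, since it consists of meets of the finitely many vectors $c_1,\ldots,c_m$. Hence every principal order ideal $\{s'\in P_\varphi : s'\preceq s\}$ is finite. The Möbius inversion formula \cite[Proposition~3.7.1]{stanley2011enumerative} holds for functions from a finite poset into any abelian group, in particular into the additive group of $\R[\theta_1,\ldots,\theta_n]$. It states that $g(s)=\sum_{s'\preceq s}f(s')$ for all $s$ if and only if $f(s)=\sum_{s'\preceq s}\mu(s',s)g(s')$ for all $s$. Combining this with Step 1 gives~\eqref{eqn:mobius}.

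If one wants a self-contained argument instead of citing Stanley, substitute $g(s')=\sum_{t\preceq s'}f(t)$ into the right-hand side of~\eqref{eqn:mobius} and exchange the order of summation. This gives $\sum_{t\preceq s} f(t)\sum_{t\preceq s'\preceq s}\mu(s',s)$. The inner sum equals $\delta_{t,s}$; this is the dual recursion for $\mu$, which follows from the recursive definition~\eqref{eqn: mobius_definition} because the left and right inverses of the zeta function coincide in the incidence algebra of a finite poset.

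\emph{Main obstacle.} The only point needing care is this last one. The paper defines $\mu$ through the ``left'' recursion $\mu(s,u)=-\sum_{s\preceq t\prec u}\mu(s,t)$, whereas the direct verification of the inversion from below uses the sum over the first argument. I would resolve this by the standard incidence-algebra observation that a one-sided inverse of $\zeta$ is two-sided. Alternatively, I would simply rely on the cited proposition, which already covers this. Everything else reduces to Proposition~\ref{prop: poset sum} and linearity.
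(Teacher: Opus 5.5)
Your proof is correct and follows the paper's own argument: take the dot product of the identity $s=\sum_{s'\preceq s}r(s')$ from Proposition~\ref{prop: poset sum} with $\mathbb{T}$ to get $g(s)=\sum_{s'\preceq s}f(s')$, then apply Möbius inversion \cite[Proposition~3.7.1]{stanley2011enumerative}. Your extra points are sound details the paper leaves implicit: finiteness of $P_\varphi$, the validity of inversion for functions valued in an abelian group, and the two-sidedness of $\mu$ in the incidence algebra.
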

\begin{proof}
    It follows from Proposition~\ref{prop: poset sum} that
$
g(s) = \sum_{s'\preceq s} f(s')
$
for all $s\in P_{\varphi}$. 
By the Möbius inversion formula \citep[Proposition~3.7.1]{stanley2011enumerative}, we have that 
$
f(s) = \sum_{s'\preceq s}\mu(s',s)g(s').
$
\end{proof}

\begin{definition}[Residue vector]
    \label{def: residue vector}
    For $s\in P_{\varphi}$, the vector and polynomial 
    \[
    r(s) = s - \bigvee_{s'\prec s}s'\in \Z^{\mathcal{T}}_{\geq 0} \qquad \textrm{and} \qquad r(s)\mathbb{T} = \sum_{\tau\in \mathcal{T}}r(s)_{\tau}\tau\in \mathbb{Z}_{\geq 0}[\theta_1,\ldots, \theta_n]
    \]
    are called, respectively, the \emph{residue vector} and \emph{residue polynomial} of $s$. 
\end{definition}

When the coordinate functions $f_1,\ldots, f_m$ of the map $\varphi$ are enumerating combinatorial objects, then the residue vectors have a combinatorial interpretation.  
For instance, when $\varphi = \varphi_{\GG,c}$ for a colored DAG $(\GG,c)$, a vector $s\in P_{\varphi}$ is the multiplicity vector for a multiset of treks in $(\GG,c)$. 
 The residue vector $r(s)$ counts the new treks introduced by $s$, that is, the treks appearing in $s$ that remain after removing the maximum multiplicity of those treks found in any $s'$ strictly preceding $s$ in $P_{\varphi}$.

\begin{example}[Residue vectors of a colored DAG]
\label{ex: poset}
    Consider the colored DAGs $(\GG,c^\ast)$ from Example \ref{ex: motivation}. The elements of the poset $P_{\varphi_{\GG,c^\ast}}$ are the multiplicity vectors corresponding to the valid multisets of treks with $\mathbb{T} = (\omega, \omega\lambda, \omega\lambda^2,\omega\lambda^3, \omega\lambda^4)^t$. Figure \ref{fig: poset Ex G123} displays the Haase diagram of this poset, with the minimal element $\hat 0 = {0} = (0,0,0,0,0)$ at the bottom.
    Next to each element $s\in P_{\varphi_{\GG,c^\ast}}$, we display the corresponding residue vector $r(s)$ in red. 
    Consider the element $s = (1,0,1,0,0)$.
    Its residue vector is 
    $$r(s) = s - s_1 \vee s_2 \vee {0} = s - s_1 \vee s_2,$$
    where $s_1 = (1,0,0,0,0)$ and $s_2 = (0,0,1,0,0)$ are elements strictly preceding $s$. The coordinate-wise maximum of $s_1$ and $s_2$ is 
    $$s_1 \vee s_2 = \max((1,0,0,0,0), (0,0,1,0,0)) = (1,0,1,0,0).$$ 
    Thus, the residue vector of $s$ is $r(s) = s - s_1 \vee s_2 = (0,0,0,0,0)$.

    The Möbius function $\mu(s', s)$ can also be computed. 
    For instance, for the same element $s = (1,0,1,0,0)$, the lower ideal consists of $s, s_1, s_2$, and ${0}$. Applying the recursive definition yields $\mu(s,s) = 1$, $\mu(s_1,s) = -1$, $\mu(s_2,s) = -1$, and $\mu({0}, s) = 1$. These integer values will serve as the coefficients for the algebraic invariants defined in the next section.
    
     \begin{figure}[t]
    \begin{subfigure}[b]{0.3\textwidth}
    \centering
    \begin{tikzpicture}
            \node[circle, draw, fill=red!50, minimum size=1pt, inner sep=1pt] (0) at (0, 0) {1};
            \node[circle, draw, fill=red!50, minimum size=1pt, inner sep=1pt] (1) at (1, -0.5) {2};
            \node[circle, draw, fill=red!50, minimum size=1pt, inner sep=1pt] (2) at (2, 0.5) {3};

             \draw[->, blue!70, >=stealth] (0) edge (1);
             \draw[->, blue!70, >=stealth] (1) edge (2);

             
    \end{tikzpicture}
    \caption{}
    \label{fig: path graph}
    \end{subfigure}
    \hfill
    \begin{subfigure}[b]{0.6\textwidth}
        \centering
        {\tiny
        \begin{tikzpicture}[scale = 1.1]
            \node (0) at (0, 0) {(0,0,0,0,0)};
            \node (1) at (-2, 1) {(0,1,0,0,0)};
            \node (2) at (0, 1) {(0,0,1,0,0)};
            \node (3) at (2, 1) {(1,0,0,0,0)};
            \node (4) at (-2, 2) {(0,1,0,1,0)};
            \node (5) at (2, 2) {(1,0,1,0,0)};
            \node (6) at (2, 3) {(1,0,1,0,1)};
             \draw[-,  >=stealth] (0) edge (1);
             \draw[-,  >=stealth] (0) edge (2);
             \draw[-,  >=stealth] (0) edge (3);
             \draw[-,  >=stealth] (1) edge (4);
             \draw[-,  >=stealth] (2) edge (5);
             \draw[-,  >=stealth] (3) edge (5);
             \draw[-,  >=stealth] (5) edge (6);
%
            \node (r1) at (-2 - 0.55, 1 - 0.3) {\textcolor{red}{(0,1,0,0,0)}};
            \node (r2) at (0 + 0.55, 1 - 0.3) {\textcolor{red}{(0,0,1,0,0)}};
            \node (r3) at (2 + 0.55, 1 - 0.3) {\textcolor{red}{(1,0,0,0,0)}};
            \node (r4) at (-2 - 0.55, 2 - 0.3) {\textcolor{red}{(0,0,0,1,0)}};
            \node (r5) at (2 + 0.55, 2 - 0.3) {\textcolor{red}{(0,0,0,0,0)}};
            \node (r6) at (2 + 0.55, 3 - 0.3) {\textcolor{red}{(0,0,0,0,1)}};          
    \end{tikzpicture}
    }
    \caption{}
    \label{fig: poset for graph}
    \end{subfigure}
        \caption{(a) The colored graph $(\GG,c^\ast)$. (b) The poset $P_{\varphi_{\GG,c^\ast}}$ for $(\GG,c^\ast)$.}
        \label{fig: poset Ex G123}
    \end{figure}
\end{example}

\begin{example}[Residue polynomials of ideal varieties of graphic matroids]
    \label{ex: poset for graphic matroid}
    Let $M_G$ be the graphic matroid for an undirected graph $G = (V,E)$, and let $\mathcal V_{M_G,\mathfrak F'}$ be an ideal variety of $M_G$ as defined in Section~\ref{subsec: ideal varieties}. 
    Since $\mathfrak F'$ is an ideal in the lattice of flats $P_{M_G}$ it follows that the atoms in the poset $P_{\varphi_{M, \mathfrak F'}}$ are precisely the standard basis vectors $b_{\{ij\}}\in \mathbb R^{\mathfrak F'}$ indexed by the set of singletons $\{ij\}$ where $\{i,j\}\in E$ is an edge appearing in some $F\in \mathfrak F'$. 
    Since $P_M$ is atomic, so is $P_{\varphi_{M, \mathfrak F'}}$. 
    It follows that the residue polynomial $r(F)\mathbb T$ of $F$ equals $0$ whenever $F$ is not an atom and $r(F)\mathbb T = \theta_i\theta_j$ whenever $F = \{ \{i, j\}\}$ is an atom. 

    Note that when $\mathfrak F'\subseteq \mathfrak F$ is not an ideal in $P_M$, it is possible that $P_{\varphi_{M_G, \mathfrak F'}}$ does not have the property that every element covering $0$ is a standard basis vector index by an atom in $P_M$. 
    In this case, the poset $P_{\varphi_{M_G, \mathfrak F'}}$, and its residue polynomials, are more complex (interesting). 
\end{example}

\begin{example}[Residue vectors of some degenerate subvarieties of secants]
    \label{ex: poset for a secant subvariety}
    In this example we consider the subvarieties of a general determinantal variety introduced in Section~\ref{subsec: secants} for $m=n=3$ and $r=2$. To simplify the notation, we denote the parameters as $s, t, p, q\in \mathbb R^3$. Before adding any constraints on the parameters, $\mathbb T = ( s_1t_1,\ldots,s_3t_3,  p_1q_1, \ldots, p_3q_3)^t$. 
    The elements of the poset $P_{\varphi_{3,3}^2}$ are 0-1 vectors of length $18$, with exactly two non-zero entries, as the parametrization functions are binomials. Note that in this case no two binomials share a monomial. 
    Thus, the meet of any two coefficient vectors is the zero vector, and the poset is an antichain of length $18$ covering $0$. Hence, trivially, $r(s) = s$ for all $s\in P_{\varphi_{3,3}^2}$.

    As soon as the constraints are introduced, the length of $\mathbb T$ decreases, and the poset changes in the following way. If $s_1 =s_2=s_3 = s$ and $q_1 =q_2=q_3 = q$, the vector of possible monomials becomes $\mathbb T = (st_1, st_2, st_3, p_1q, p_2q, p_3q)$, and we denote the parametrization as
    \begin{equation}
    \label{eqn: nataliias variety 1}
    \varphi_1 = \varphi^2_{3, 3}|_{\substack{s_1=s_2=s_3\\
    q_1=q_2=q_3}}: (s, t_1, t_2, t_3, p_1, p_2, p_2, q)\mapsto (st_j+ p_iq)_{ij} = (x_{ij})
    \end{equation}
    The Haase diagram of this poset is presented in Figure \ref{fig: poset Ex secant 1}. The minimal element of the poset is $\hat{0} = (0, 0, 0, 0, 0, 0)$. The elements in the upper cover of $\hat{0}$ are the only ones to have non-zero residue vectors. Every vector $c$ coming from the parametrization map has exactly two elements in the lower cover. 

    When fewer constraints are imposed, the poset changes. Consider the map $\varphi^2_{3, 3}$ with $s_1=s_2$ and $q_1=q_2$. The parametrization is given as 
    \begin{equation}
    \label{eqn: nataliias variety 2}
    \varphi_2 = \varphi^2_{3, 3}|_{\substack{s_1=s_2\\
    q_1=q_2}}.
    \end{equation}
    In this case, the vector $\mathbb T$ has length 12. The Haase diagram of the poset $P_{\varphi_2}$ is presented in Figure \ref{fig: poset Ex secant 2}. 
    Here the notation $c_{ij}$ corresponds to the coefficient vector of the $ij$-th coordinate function of $\varphi_2$. 
    The minimal element $\hat{0}$ is again the zero vector.

\begin{figure}[t]
        \centering
        \begin{tikzpicture}[scale=0.50]
            \node (0) at (0, 0) {\footnotesize 000000};
            \node (1) at (-7.5, 0.8) {\footnotesize 100000};
            \node (2) at (-4.5, 1.3) {\footnotesize 010000};
            \node (3) at (-1.5, 1.5) {\footnotesize 001000};
            \node (4) at (1.5, 1.5) {\footnotesize 000100};
            \node (5) at (4.5, 1.3) {\footnotesize 000010};
            \node (6) at (7.5, 0.8) {\footnotesize 000001};

            \node (7) at (-9, 3) {\footnotesize 100100};
            \node (8) at (-7, 3.5) {\footnotesize 010100};
            \node (9) at (-5, 4) {\footnotesize 001100};
            \node (10) at (-3, 4.5) {\footnotesize 100010};
            \node (11) at (0, 4) {\footnotesize 010010};
            \node (12) at (3, 4.5) {\footnotesize 001010};
            \node (13) at  (5, 4) {\footnotesize 100001};
            \node (14) at (7, 3.5) {\footnotesize 010001};
            \node (15) at (9, 3){\footnotesize 001001};
             \draw[-,  >=stealth] (0) edge (1);
             \draw[-,  >=stealth] (0) edge (2);
             \draw[-,  >=stealth] (0) edge (3);
             \draw[-,  >=stealth] (0) edge (4);
             \draw[-,  >=stealth] (0) edge (5);
             \draw[-,  >=stealth] (0) edge (6);
             
             \draw[-,  >=stealth] (1) edge (7);
             \draw[-,  >=stealth] (1) edge (10);
             \draw[-,  >=stealth] (1) edge (13);
             \draw[-,  >=stealth] (2) edge (8);
             \draw[-,  >=stealth] (2) edge (11);
             \draw[-,  >=stealth] (2) edge (14);
             \draw[-,  >=stealth] (3) edge (9);
             \draw[-,  >=stealth] (3) edge (12);
             \draw[-,  >=stealth] (3) edge (15);
             \draw[-,  >=stealth] (4) edge (7);
             \draw[-,  >=stealth] (4) edge (8);
             \draw[-,  >=stealth] (4) edge (9);
             \draw[-,  >=stealth] (5) edge (10);
             \draw[-,  >=stealth] (5) edge (11);
             \draw[-,  >=stealth] (5) edge (12);
             \draw[-,  >=stealth] (6) edge (13);
             \draw[-,  >=stealth] (6) edge (14);
             \draw[-,  >=stealth] (6) edge (15);
    \end{tikzpicture}
        \caption{ The poset $P_{\varphi_1}$ of the subvariety of the determinantal variety $\mathcal V_{\varphi^2_{3,3}}$ defined by $\varphi_1$ in~\eqref{eqn: nataliias variety 1}.}
        \label{fig: poset Ex secant 1}
    \end{figure}

    \begin{figure}[t]
        \centering
        \begin{tikzpicture}
            \node (0) at (0, 0) { $\hat{0}$};
            
            \node (1) at (-5.2, 2) {$c_{13}$};
            \node (2) at (-4.4, 2) {$c_{23}$};
            \node (3) at (-3.6, 2) {$c_{33}$};
            \node (4) at (-2.8, 2) {$c_{32}$};
            \node (5) at (-2, 2) {$c_{31}$};

            \node (6) at (1, 2.5) {$c_{11}$};
            \node (7) at (2, 2.5) {$c_{12}$};
            \node (8) at (3, 2.5) {$c_{21}$};
            \node (9) at (4, 2.5) {$c_{22}$};

            \node (10) at (0, 1) {$c_{11}\wedge c_{12}$};
            \node (11) at (1.8, 1) {$c_{11}\wedge c_{21}$};
            \node (12) at (3.6, 1) {$c_{12}\wedge c_{22}$};
            \node (13) at (5.4, 1) {$c_{21}\wedge c_{22}$};

            \draw[-,  >=stealth] (0) edge (1);
            \draw[-,  >=stealth] (0) edge (2);
            \draw[-,  >=stealth] (0) edge (3);
            \draw[-,  >=stealth] (0) edge (4);
            \draw[-,  >=stealth] (0) edge (5);

            \draw[-,  >=stealth] (0) edge (10);
            \draw[-,  >=stealth] (0) edge (11);
            \draw[-,  >=stealth] (0) edge (12);
            \draw[-,  >=stealth] (0) edge (13);

            \draw[-,  >=stealth] (10) edge (6);
            \draw[-,  >=stealth] (10) edge (7);
            \draw[-,  >=stealth] (11) edge (6);
            \draw[-,  >=stealth] (11) edge (8);
            \draw[-,  >=stealth] (12) edge (7);
            \draw[-,  >=stealth] (12) edge (9);
            \draw[-,  >=stealth] (13) edge (8);
            \draw[-,  >=stealth] (13) edge (9);
        \end{tikzpicture}
        \caption{The poset $P_{\varphi_2}$ of the subvariety of the  determinantal variety $\mathcal V_{\varphi^2_{3,3}}$ defined by $\varphi_2$ in~\eqref{eqn: nataliias variety 2}.}
        \label{fig: poset Ex secant 2}
    \end{figure}

\end{example}

\subsection{\texorpdfstring{$P_{\varphi}$}{P}-invariants}
\label{subsec: poset invariants}
The Möbius inversion formula for the poset $P_{\varphi}$ may be used to obtain polynomial constraints on the semialgebraic set $\varphi(\Theta)$; i.e., polynomials belonging to the vanishing ideal $I_{\varphi}$ in~\eqref{eqn: vanishing ideal}. 

\begin{definition}[$P_{\varphi}$-invariant]
    \label{def: poset invariants}
    Let $\varphi:\Theta\to \mathbb R^m$ be a rational map with coordinate functions $f_1,\ldots, f_m\in\mathbb Z_{\geq 0}[\theta_1,\ldots, \theta_n]$, and let $s\in P_\varphi$.
    If there exists a polynomial $p\in\mathbb{R}[x_1,\ldots, x_m]$ and rational functions $\beta_s,\ \alpha_{s'}\in\mathbb{R}(x_1,\ldots, x_m)$ for all $s'\preceq s$ in $P_{\varphi}$ whose denominators do not vanish on $\varphi(\Theta)$ such that
    \[
    \beta_s(\varphi(\theta)) = f(s)(\theta) \qquad \text{and} \qquad \alpha_{s'}(\varphi(\theta)) = g(s')(\theta)
    \]
    for all $\theta \in \Theta$, and 
    \begin{equation}
    \label{eqn: poset invariant}
    q = p\left(\beta_s - \sum_{s'\preceq s}\mu(s',s)\alpha_{s'}\right)\in \mathbb R[x_1,\ldots, x_m],
    \end{equation} 
    then the polynomial $q$ is called a \emph{$P_{\varphi}$-invariant}. 
\end{definition}

\begin{lemma}
    \label{lem: in the vanishing ideal}
    If $q\in \mathbb R[x_1,\ldots, x_m]$ is a $P_\varphi$-invariant, then $q\in I_\varphi$.
\end{lemma}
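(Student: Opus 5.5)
The plan is to evaluate $q$ at an arbitrary point $\varphi(\theta)$, $\theta\in\Theta$, and show it vanishes. Since $I_\varphi$ consists exactly of the polynomials vanishing at every point of $\varphi(\Theta)$, by \eqref{eqn: vanishing ideal}, this suffices. By Definition~\ref{def: poset invariants}, $q$ is already a polynomial in $\mathbb R[x_1,\ldots,x_m]$. Its defining expression \eqref{eqn: poset invariant} involves rational functions $\beta_s$ and $\alpha_{s'}$ whose denominators do not vanish on $\varphi(\Theta)$. Hence for each $\theta\in\Theta$ we may evaluate the right-hand side of \eqref{eqn: poset invariant} factor by factor at $x=\varphi(\theta)$, and the result equals $q(\varphi(\theta))$. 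This holds because evaluation at a point where all denominators are nonzero is a ring homomorphism on the local ring at that point, and the identity \eqref{eqn: poset invariant} holds in $\mathbb R(x_1,\ldots,x_m)$.

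Carrying out the evaluation gives
\[
q(\varphi(\theta)) = p(\varphi(\theta))\Bigl(\beta_s(\varphi(\theta)) - \sum_{s'\preceq s}\mu(s',s)\,\alpha_{s'}(\varphi(\theta))\Bigr)
= p(\varphi(\theta))\Bigl(f(s)(\theta) - \sum_{s'\preceq s}\mu(s',s)\,g(s')(\theta)\Bigr),
\]
using the defining properties $\beta_s\circ\varphi = f(s)$ and $\alpha_{s'}\circ\varphi = g(s')$ on $\Theta$. By the Möbius inversion formula of Theorem~\ref{thm: mobius}, the bracket is the zero polynomial in $\theta$. Therefore $q(\varphi(\theta))=0$ for all $\theta\in\Theta$, and so $q\in I_\varphi$.

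The only point needing care, and the step I would be careful to justify, is the passage from the identity of rational functions in \eqref{eqn: poset invariant} to pointwise evaluation. I would handle it by multiplying through by the product $D$ of all the denominators of $\beta_s$ and the $\alpha_{s'}$. This turns \eqref{eqn: poset invariant} into a polynomial identity $Dq = p\cdot N$ for some polynomial $N$, and polynomial identities can be evaluated at any point. At $x=\varphi(\theta)$ we have $D(\varphi(\theta))\neq 0$, and $N/D$ evaluates to the bracket above, which is zero. Dividing by $D(\varphi(\theta))$ then gives $q(\varphi(\theta))=0$.
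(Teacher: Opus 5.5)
Your proof is correct and matches the paper's argument: evaluate $q$ at $\varphi(\theta)$, substitute $\beta_s(\varphi(\theta))=f(s)(\theta)$ and $\alpha_{s'}(\varphi(\theta))=g(s')(\theta)$, and use the M\"obius inversion formula of Theorem~\ref{thm: mobius} to see that the bracket vanishes identically. Your extra step of clearing the denominators $D$ to justify the pointwise evaluation adds a little rigor that the paper leaves implicit.
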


\begin{proof}
    Let $q \in \mathbb{R}[x_1,\ldots, x_m]$ be a $P_\varphi$-invariant. To show $q \in I_\varphi$, we evaluate $q$ at a point $\varphi(\theta)$ for an arbitrary $\theta \in \Theta$ and show that $q(\varphi(\theta))=0$. That is, take 
    $$q(\varphi(\theta)) = p(\varphi(\theta))\left(\beta_s(\varphi(\theta)) - \sum_{s'\preceq s}\mu(s',s)\alpha_{s'}(\varphi(\theta))\right).$$

    By the conditions in Definition~\ref{def: poset invariants}, the denominators of the rational functions $\beta_s$ and $\alpha_{s'}$ do not vanish on $\varphi(\Theta)$, meaning they are well-defined at $\varphi(\theta)$. Substituting the identities $\beta_s(\varphi(\theta)) = f(s)(\theta)$ and $\alpha_{s'}(\varphi(\theta)) = g(s')(\theta)$ yields:
    $$q(\varphi(\theta)) = p(\varphi(\theta))\left(f(s)(\theta) - \sum_{s'\preceq s}\mu(s',s)g(s')(\theta)\right).$$
    By the Möbius inversion formula (Theorem~\ref{thm: mobius}), we have $f(s) = \sum_{s'\preceq s}\mu(s',s)g(s')$. Therefore, the expression inside the parentheses evaluates identically to zero for all $\theta \in \Theta$.
    Since $p \in \mathbb{R}[x_1,\ldots, x_m]$ is a polynomial, we have that $q(\varphi(\theta)) = p(\varphi(\theta)) \cdot 0 = 0$ for all $\theta \in \Theta$, which proves $q \in I_\varphi$.
\end{proof}

In other words, a $P_{\varphi}$-invariant is a polynomial in the vanishing ideal $I_{\varphi}$ of the semialgebraic set $\varphi(\Theta)$ produced by clearing denominators in a regular function $\beta_s - \sum_{s'\preceq s}\mu(s',s)\alpha_{s'}$ on the variety of $I_{\varphi}$ obtained from the Möbius inversion formula~\eqref{eqn:mobius}. 

One condition guaranteeing the existence of $P_\varphi$-invariants for $\varphi(\Theta)$ comes from birational geometry and, more recently, algebraic statistics. 
We say that $\varphi(\Theta)$ satisfies \emph{global rational identifiability} \cite{sullivant2023algebraic} if there is a rational function $\psi$ such that $\psi\circ\varphi(\theta) = \theta$ for all $\theta \in \Theta$. 

\begin{proposition}
    \label{prop: existence of poset invariants}
    If $\varphi(\Theta)$ satisfies global rational identifiability then there exist $P_{\varphi}$-invariants. 
\end{proposition}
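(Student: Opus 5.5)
Under global rational identifiability there is a rational map $\psi = (\psi_1,\ldots,\psi_n)$ with $\psi_k\in\mathbb R(x_1,\ldots,x_m)$ and $\psi\circ\varphi(\theta)=\theta$ on $\Theta$. Implicit in this hypothesis is that the denominators of the $\psi_k$ do not vanish on $\varphi(\Theta)$; otherwise the composition would not be defined there. The plan is to use $\psi$ to write every polynomial in $\theta$ as a rational function of $x$. This yields the functions $\beta_s$ and $\alpha_{s'}$ required in Definition~\ref{def: poset invariants}, and clearing denominators then produces the invariant.

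First fix any $s\in P_\varphi$; the poset is nonempty, since it contains $c_1$. For this $s$ and every $s'\preceq s$, set
\[
\beta_s = f(s)(\psi_1,\ldots,\psi_n), \qquad \alpha_{s'} = g(s')(\psi_1,\ldots,\psi_n).
\]
Here $f(s)=r(s)\mathbb T$ and $g(s')=s'\mathbb T$ are the polynomials in $\mathbb R[\theta_1,\ldots,\theta_n]$ from~\eqref{eqn: mobius functions}. Substituting rational functions into a polynomial gives rational functions in $\mathbb R(x_1,\ldots,x_m)$.

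Write each $\psi_k=a_k/b_k$. The denominator of every $\beta_s$ and $\alpha_{s'}$ then divides a power of $D=\prod_k b_k$, and $D$ does not vanish on $\varphi(\Theta)$. Moreover, $\beta_s(\varphi(\theta)) = f(s)(\psi(\varphi(\theta))) = f(s)(\theta)$, and in the same way $\alpha_{s'}(\varphi(\theta))=g(s')(\theta)$, so the required identities hold.

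For the polynomial $p$, take $p=D^N$ with $N$ at least the largest total degree of $f(s)$ and of the $g(s')$. Then $D^N\beta_s$ and each $D^N\alpha_{s'}$ are polynomials. Hence $q=p\bigl(\beta_s-\sum_{s'\preceq s}\mu(s',s)\alpha_{s'}\bigr)$ is a polynomial, so $q$ is a $P_\varphi$-invariant. By Lemma~\ref{lem: in the vanishing ideal} it lies in $I_\varphi$, as it must.

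The only delicate point is bookkeeping: interpreting "denominators do not vanish on $\varphi(\Theta)$" correctly, and checking that one power of $D$ clears all denominators at once. No real obstacle is expected. Note that $q$ may be the zero polynomial, because Theorem~\ref{thm: mobius} makes the bracket vanish identically. The statement only asserts existence in the sense of Definition~\ref{def: poset invariants}, so this is acceptable, and the expressions obtained are typically nontrivial before simplification.
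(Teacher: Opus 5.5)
Your proof is correct and follows the paper's argument: the paper also sets $\beta_s=\psi^\ast(f(s))$ and $\alpha_{s'}=\psi^\ast(g(s'))$ via the pullback of the rational inverse. The only difference is that the paper clears denominators with the product $d_s\prod_{s'\preceq s}d_{s',g}$ of the individual denominators, whereas you use a single power $D^N$ of the common denominator, which is inessential. Your closing remark understates the situation: since $\theta\mapsto\psi$ induces a ring homomorphism and Theorem~\ref{thm: mobius} is an identity in $\mathbb{R}[\theta_1,\ldots,\theta_n]$, the bracket is identically zero for this choice (in the paper's proof too), so $q=0$ always and existence holds only in the formal sense Definition~\ref{def: poset invariants} permits; nontrivial invariants, as in the paper's examples, arise by taking some $\alpha_{s'}$ to be coordinates $x_i$ rather than $\psi^\ast(f_i)$.
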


\begin{proof}
It suffices to show that for $s\in P_\varphi$, there exists a choice for the polynomial $p\in \mathbb R[x_1,\ldots, x_m]$ and rational functions $\beta_s,\alpha_{s'}\in\mathbb R(x_1,\ldots, x_m)$ satisfying the conditions of Definition~\ref{def: poset invariants}. 
To this end, we take $\beta_s = \psi^\ast(f(s))$ and $\alpha_s = \psi^\ast(g(s))$ for all $s\in P_\varphi$, where $\psi^\ast$ is the algebraic pullback of $\psi$. 
Since $\psi^\ast: \mathbb R[\theta_1,\ldots, \theta_n]\to S^{-1}\mathbb R[x_1,\ldots, x_m]$ is a ring homomorphism such that the polynomials in the localizing monoid $S$ do not vanish on $\varphi(\Theta)$, then $\beta_s = h_{s}/d_s$ and $\alpha_{s'} = h_{s', g}/d_{s',g}$ for polynomials $h_s,d_s,h_{s',g},d_{s',g}\in\mathbb R[x_1,\ldots, x_m]$ with $d_s, d_{s',g}$ not vanishing on $\varphi(\Theta)$. 
Moreover, the functional relations $\beta_s(\varphi(\theta)) = f(s)(\theta)$ and $\alpha_{s'}(\varphi(\theta)) = g(s')(\theta)$ for all $\theta\in\Theta$ are immediate from the definition of global rational identifiability. 
Hence, taking $p = d_s\prod_{s'\preceq s}d_{s',g}$ produces a $P_\varphi$-invariant.
\end{proof}

\begin{example}
    \label{ex: colored DAGs birational}
The colored DAG models $\mathcal M(\GG,c)$ satisfy global rational identifiability, 
meaning that there exist rational functions $h_i$ for every vertex $i\in[m]$ of $\GG =([m],E)$ and $h_{ij}$ for every edge $i\to j\in E$ such that if $\Sigma\in\mathcal{M}(\GG,c)$ satisfies $\Sigma = \varphi_{\GG,c}(\lambda_1,\ldots, \lambda_e,\omega_1,\ldots, \omega_n)$ then $h_i(\Sigma) = \omega_i$ and $h_{ij}(\Sigma) = \lambda_{ij}$. 
That is, there exist rational functions in the $\sigma_{ij}$-coordinates that recover the parameters yielding $\Sigma$, for every $\Sigma\in\mathcal{M}(\GG,c)$. 
These rational functions are ratios of determinants
\begin{equation}
    \label{eqn: identifying functions}
    \omega_{i\mid A}(\Sigma) = \frac{|\Sigma_{A\cup\{i\}, A\cup\{i\}}|}{|\Sigma_{A,A}|} \qquad \lambda_{ij\mid A}(\Sigma) = \frac{|\Sigma_{\{i\}\cup A\setminus \{i\}, \{j\}\cup A\setminus\{i\}}|}{|\Sigma_{A,A}|},
\end{equation}
where $\Sigma_{A, B}$ denotes the submatrix of $\Sigma$ with row indices belonging to $A\subseteq[m]$ and column indices belonging to $B\subseteq[m]$. 
The choices for the sets $A$ for which the functions in~\eqref{eqn: identifying functions} yield global rational identifiability are characterized in \cite[Theorem 5]{boege2024colored}. 
For example, by \cite[Lemma 4]{boege2024colored}, we may always choose $A = \textrm{pa}_{\GG}(j)$, where $\textrm{pa}_{\GG}(j) =\{ k\in[m] : k\to j\in E\}$ denotes the set of \emph{parents} of $j$ in $\GG$; e.g., for $\Sigma = \varphi_{\GG,c}(\omega_1,\ldots, \omega_n, \lambda_1,\ldots, \lambda_e)\in\mathcal M(\GG,c)$, we have 
\begin{equation}
\label{eqn: standard identification map}
\omega_{i\mid \textrm{pa}(i)}(\Sigma) = \omega_{i} \textrm{ and } \lambda_{ij\mid \textrm{pa}(j)}(\Sigma) =\lambda_{ij}  \textrm{ for all $i\in [m]$ and $i\to j\in E$.}
\end{equation}
Fixing a choice of the sets $A$ in~\eqref{eqn: identifying functions} that satisfies the conditions of \cite[Theorem 5]{boege2024colored}, we obtain a rational inverse $\psi_{\GG,c}$ to the map $\varphi_{\GG,c}$ in~\eqref{eqn: trek map} that defines the model $\mathcal{M}(\GG,c)$. 
The algebraic pullback of $\psi_{\GG,c}$ is a homomorphism between local rings 
\begin{equation}
    \label{eqn: inverse map}
    \psi_{\GG,c}^\ast: \mathbb{R}[\lambda_1,\ldots, \lambda_e,\omega_1,\ldots, \omega_n] \to U^{-1}\mathbb{R}[\sigma_{ij}: 1\leq i\leq j\leq m]
\end{equation}
where $U$ is a multiplicatively closed set  generated by the principal minors $|\Sigma_{A,A}|$ appearing as denominators in~\eqref{eqn: identifying functions}. 
\end{example}

The following example computes some explicit $P_\varphi$-invariants for colored DAG models using the construction in Example~\ref{ex: colored DAGs birational}. 

\begin{example}
    \label{ex: poset invariants can be nonlinear}
    Consider the colored DAG $(\GG,c^\ast)$ in Figure~\ref{fig: path graph}. 
    The poset $P_{\varphi_{\GG,c^\ast}}$ is depicted in Figure~\ref{fig: poset for graph}, with the residue vector of each element in red. 
    Let $\psi_{\GG,c^\ast}^\ast$ be the ring homomorphism specified by the formulas in~\eqref{eqn: standard identification map}, and let $s = (1,0,1,0,0)$. 
    Note that $s$ is the coefficient vector of the trek polynomial $p_{2,2}^{(\GG,c^\ast)}$.
    It covers the coefficient vectors for $p_{1,3}^{(\GG,c^\ast)}$ and $p_{1,1}^{(\GG,c^\ast)}$, which in turn cover $0$.  
    Since
    \[
    \psi_{\GG,c^\ast}^\ast(p_{2,2}^{(\GG,c^\ast)}) = \sigma_{22}, \quad \psi_{\GG,c^\ast}^\ast(p_{1,3}^{(\GG,c^\ast)}) = \sigma_{13}, \quad \psi_{\GG,c^\ast}^\ast(p_{1,1}^{(\GG,c^\ast)}) = \sigma_{11}, \quad \psi_{\GG,c^\ast}(0) = 0, 
    \]
    and $f(s) = 0$, we obtain the $P_{\varphi_{\GG,c^\ast}}$-invariant 
    \[
    \psi_{\GG,c^\ast}^\ast(f(s)) - \sum_{s'\preceq s}\mu(s',s)\psi_{\GG,c^\ast}^\ast(g(s')) = 0 - \sigma_{22} + \sigma_{13} + \sigma_{11} - 0. 
    \]
    In particular, $\sigma_{11} - \sigma_{22} + \sigma_{13}\in I_{\GG,c^\ast}$, recovering the observation in Example~\ref{ex: motivation}.

    Note that Definition~\ref{def: poset invariants} does not require us to directly apply a rational inverse. We only need that the rational functions $\beta_s,\alpha_s$ agree with $f(s), g(s)$ at all points in the model. 
    Moreover, $P_{\varphi_{\GG,c^\ast}}$-invariants need not be linear. 
    For example, consider the colored DAG $(\GG,c^\ast)$ in Figure~\ref{fig: complete graph}, whose poset $P_{\varphi_{\GG,c^\ast}}$ is depicted in Figure~\ref{fig: poset for complete graph}. 
    Let $s = (1,0,2,1,1)$ be the top-left element of $P_{\varphi_{\GG,c^\ast}}$. 
    In this case, $f(s) = \omega\lambda^2 + \omega\lambda^4$, and $g(s) = p_{3,3}^{(\GG,c^\ast)}$. 
    It can be checked that there are exactly three elements $s'\prec s$ with $\mu(s',s)\neq 0$, and these are 
    \[
    s^{(1)} =(1,0,1,0,0), \quad s^{(2)} =(0,0,1,1,0), \quad s^{(3)} = (0,0,1,0,0).
    \]
    We also have that $g(s^{(1)}) = p_{1,2}^{\GG,c^\ast}$. 
    Hence, we can take $\alpha_{s} = \sigma_{33}$, $\alpha_{s^{(1)}} = \sigma_{12}$, and we can apply the parameter identification formulas $\omega = \sigma_{11}$ and $\lambda = \sigma_{12}/\sigma_{11}$ to all $f(s)$, $g(s^{(2)})$ and $g(s^{(3)})$ to produce the rational functions $\beta_s$, $\alpha_{s^{(2)}}$ and $\alpha_{s^{(3)}}$.  
    Taking $p = \sigma_{11}^3$, recovers the $P_{\varphi_{\GG,c^\ast}}$-invariant
    \[
    \begin{split}
        \sigma_{11}^3\left(\beta_s - \alpha_s + \alpha_{s^{(1)}} + \alpha_{a^{(2)}} - \alpha_{s^{(3)}}\right) &= \sigma_{11}^3((\sigma_{11}\left(\frac{\sigma_{12}}{\sigma_{11}}\right)^2 + \sigma_{11}\left(\frac{\sigma_{12}}{\sigma_{11}}\right)^4) - \sigma_{33} + \sigma_{12}\\ 
        & + (\sigma_{11}\left(\frac{\sigma_{12}}{\sigma_{11}}\right)^2 + \sigma_{11}\left(\frac{\sigma_{12}}{\sigma_{11}}\right)^3) - \left(\frac{\sigma_{12}}{\sigma_{11}}\right)^2),\\
        &= \sigma_{12}^4 + \sigma_{11}\sigma_{12}^3 + \sigma_{11}^2\sigma_{12}^2 + \sigma_{11}^3(\sigma_{12} - \sigma_{33}).
    \end{split}
    \]
    Since our chosen rational functions $\beta_s, \alpha_s$ agree with the corresponding $f(s)$ and $g(s)$ at all points in the model, it follows that this polynomial belongs to the vanishing ideal.  That is, 
    \[
    \sigma_{12}^4 + \sigma_{11}\sigma_{12}^3 + \sigma_{11}^2\sigma_{12}^2 + \sigma_{11}^3(\sigma_{12} - \sigma_{33})\in I_{\GG,c^\ast}.
    \]
    
    \begin{figure}[t]
    \begin{subfigure}[b]{0.3\textwidth}
    \centering
    \begin{tikzpicture}
            \node[circle, draw, fill=red!50, minimum size=1pt, inner sep=1pt] (0) at (0, 0) {1};
            \node[circle, draw, fill=red!50, minimum size=1pt, inner sep=1pt] (1) at (0, 1.5) {2};
            \node[circle, draw, fill=red!50, minimum size=1pt, inner sep=1pt] (2) at (1.5, 1.5) {3};

             \draw[->, blue!70, >=stealth] (0) edge (1);
             \draw[->, blue!70, >=stealth] (1) edge (2);
             \draw[->, blue!70, >=stealth] (0) edge (2);

            \node at (-1.25,0.75) {$(\GG,c^\ast) =$};
             
    \end{tikzpicture}
    \caption{}
    \label{fig: complete graph}
    \end{subfigure}
    \hfill
    \begin{subfigure}[b]{0.6\textwidth}
        \centering
        {\tiny
        \begin{tikzpicture}[scale = 1.1]
            \node (0) at (0, 0) {(0,0,0,0,0)};
            \node (1) at (-2, 1) {(1,0,0,0,0)};
            \node (2) at (0, 1) {(0,0,1,0,0)};
            \node (3) at (2, 1) {(0,1,0,0,0)};

            \node (4) at (-2, 2) {(1,0,1,0,0)};
            \node (7) at (0, 2) {(0,0,1,1,0)};
            \node (5) at (2, 2) {(0,1,1,0,0)};

            \node (6) at (2, 3) {(0,1,1,1,0)};
            \node (8) at (-2, 3) {(1,0,2,1,1)};

             \draw[-,  >=stealth] (0) edge (1);
             \draw[-,  >=stealth] (0) edge (2);
             \draw[-,  >=stealth] (0) edge (3);

             \draw[-,  >=stealth] (1) edge (4);
             \draw[-,  >=stealth] (2) edge (4);
             \draw[-,  >=stealth] (2) edge (5);
             \draw[-,  >=stealth] (2) edge (7);
             \draw[-,  >=stealth] (3) edge (5);

             \draw[-,  >=stealth] (5) edge (6);
             \draw[-,  >=stealth] (4) edge (8);
             \draw[-,  >=stealth] (7) edge (8);
             \draw[-,  >=stealth] (7) edge (6);


            \node (r1) at (-2 - 0.55, 1 - 0.3) {\textcolor{red}{(1,0,0,0,0)}};
            \node (r2) at (0 + 0.55, 1 - 0.3) {\textcolor{red}{(0,0,1,0,0)}};
            \node (r7) at (0 + 0.55, 2 - 0.3) {\textcolor{red}{(0,0,0,1,0)}};
            \node (r3) at (2 + 0.55, 1 - 0.3) {\textcolor{red}{(0,1,0,0,0)}};
            \node (r4) at (-2 - 0.55, 2 - 0.3) {\textcolor{red}{(0,0,0,0,0)}};
            \node (r5) at (2 + 0.55, 2 - 0.3) {\textcolor{red}{(0,0,0,0,0)}};
            \node (r6) at (2 + 0.55, 3 - 0.3) {\textcolor{red}{(0,0,0,0,0)}};
            \node (r8) at (-2 - 0.55, 3 - 0.3) {\textcolor{red}{(0,0,1,0,1)}};
             
    \end{tikzpicture}
    }
    \caption{}
    \label{fig: poset for complete graph}
    \end{subfigure}
        \caption{(a) The colored graph $(\GG,c^\ast)$. (b) The poset $P_{\varphi_{\GG,c^\ast}}$ for $(\GG,c^\ast)$.}
        \label{fig: poset Ex G123 complete}
    \end{figure}
\end{example}

\begin{remark}
    \label{rem: poset invariants value}
    The identification of families of polynomials belonging to the vanishing ideal of a semialgebraic set is an important endeavor in the fields of applied and computational algebra as well as algebraic statistics. 
    This is because such polynomials can be used to distinguish the vanishing ideals of two semialgebraic sets, and hence solve model distinguishability problems in statistics (e.g. Problem~\ref{prob: colored distinguishability}).  
    For colored DAG models, the only known family of polynomials belonging to the vanishing ideal $I_{\varphi_{\GG,c}}$ are those listed in \cite[Definition 7]{boege2024colored}. 
    The family of $P_{\varphi_{\GG,c^\ast}}$-invariants is a new family of polynomial constraints, providing new tools for solving the model distinguishability problems. 
\end{remark}

While satisfying global rational identifiability is sufficient to guarantee the existence of $P_\varphi$-invariants, it is not necessary.  

\begin{example}
    \label{ex: matroid poset invariants}
    Consider the graphic matroid $M_G$ for the $4$-cycle $G = ([4],E)$ with $E=\{\{1,2\},\{1,3\},\{2,4\}, \{3,4\}\}$. 
    The poset for the matroid flat variety $\mathcal V_{\varphi_{M_G, \mathfrak F}}$ is shown in Figure~\ref{fig: 4-cycle poset}. 
    In this case, for every non-singleton flat $F\in \mathfrak F$, the rational functions $\alpha_F = x_F$ satisfies the conditions in the Definition~\ref{def: poset invariants}.  
    Moreover, by Example~\ref{ex: poset for graphic matroid}, we know that $r(F) = 0$.  
    Hence, we may take $\beta_F = 0$ to obtain the (linear) $P_{\varphi_{M_G, \mathfrak F}}$-invariants
    \[
    0 - \sum_{F'\preceq F \textrm{ in $P_{M_G}$}}\mu(F', F)x_{F'}\in I_{\varphi_{M_G, \mathfrak F}}. 
    \]
    However, the map $\varphi_{M_G, \mathfrak F}$ does not satisfy global rational identifiability. 
    It can be checked that $\varphi_{M_G, \mathfrak F}$ is the transformation $(\theta_1,\theta_2,\theta_3,\theta_4)\mapsto (\theta_1\theta_2,\theta_1\theta_3, \theta_2\theta_4,\theta_3\theta_4)$ taking $\mathbb R^4$ to the $4$-dimensional linear subspace of $\mathbb R^{\mathfrak F}$ defined by these linear $P_{\varphi_{M_G, \mathfrak F}}$-invariants. 
    But this map is not invertible. 
    \begin{figure}[b]
        \centering
        {\tiny
        \begin{tikzpicture}
            \node (0) at (0, 0) {$\emptyset$};
            \node (1) at (-3, 1) {$\{\{1,2\}\}$};
            \node (2) at (-1, 1) {$\{\{1,3\}\}$};
            \node (3) at (1, 1) {$\{\{2,4\}\}$};
            \node (4) at (3, 1) {$\{\{3,4\}\}$};

            \node (5) at (-5, 2) {$\{\{1,2\}, \{2,4\}\}$};
            \node (6) at (-3, 2) {$\{\{1,2\}, \{3,4\}\}$};
            \node (7) at (-1, 2) {$\{\{1,3\}, \{3,4\}\}$};
            \node (8) at (1, 2) {$\{\{1,2\}, \{1,3\}\}$};
            \node (9) at (5, 2) {$\{\{2,4\}, \{3,4\}\}$};
            \node (10) at (3, 2) {$\{\{1,3\}, \{2,4\}\}$};
            
            \node (12) at (0, 3) {$\{\{1,2\}, \{1,3\}, \{2,4\}, \{3,4\}\}$};

             \draw[-,  >=stealth] (0) edge (1);
             \draw[-,  >=stealth] (0) edge (2);
             \draw[-,  >=stealth] (0) edge (3);
             \draw[-,  >=stealth] (0) edge (4);
             
             \draw[-,  >=stealth] (1) edge (5);
             \draw[-,  >=stealth] (3) edge (5);

             \draw[-,  >=stealth] (1) edge (6);
             \draw[-,  >=stealth] (4) edge (6);

             \draw[-,  >=stealth] (2) edge (7);
             \draw[-,  >=stealth] (4) edge (7);

             \draw[-,  >=stealth] (1) edge (8);
             \draw[-,  >=stealth] (2) edge (8);

             \draw[-,  >=stealth] (3) edge (9);
             \draw[-,  >=stealth] (4) edge (9);

             \draw[-,  >=stealth] (2) edge (10);
             \draw[-,  >=stealth] (3) edge (10);

             \draw[-,  >=stealth] (5) edge (12);
             \draw[-,  >=stealth] (6) edge (12);
             \draw[-,  >=stealth] (7) edge (12);
             \draw[-,  >=stealth] (8) edge (12);
             \draw[-,  >=stealth] (9) edge (12);
             \draw[-,  >=stealth] (10) edge (12);


             
    \end{tikzpicture}
    }
    \caption{The lattice of flats $P_{M_G}$ for the graphic matroid $M_G$ of the $4$-cycle $G = ([4], E)$ where $E = \{\{1,2\}, \{1,3\}, \{2,4\}, \{3,4\}\}$.}
    \label{fig: 4-cycle poset}
    \end{figure}
\end{example}

As shown in Example~\ref{ex: matroid poset invariants}, 
particularly nice $P_{\varphi}$-invariants arise when the rational functions $\beta_s$ and $\alpha_s$ belong to $\mathbb{R}[x_1,\ldots, x_m]$. 
This happens, for example, when $f(s) = 0$ and $s' = c_{i_{s'}}$, the coefficient vector of a coordinate function $f_{i_{s'}}(\theta)$ for each $s'\preceq s$.
When $\Theta$ contains an open subset of $\mathbb R^n$, we have that $f(s)(\theta) = 0$ for all points $\theta\in\Theta$ if and only if the residue polynomial $f(s)$ is the $0$-polynomial in  $\R[\theta_1,\ldots, \theta_n]$.
Moreover, since $s' = c_{i_{s'}}$ then we may take $g(s') = f_{i_{s'}}(\theta)$.  
In particular, the resulting $P_{\varphi}$-invariant is a linear polynomial in the vanishing ideal $I_{\varphi}$. 

\begin{definition}
    \label{def: linear poset invariants}
    A $P_{\varphi}$-invariant is \emph{linear} if the rational functions $\beta_s$  and $\alpha_{s'}$ in~\eqref{eqn: poset invariant} 
    are linear forms. 
\end{definition}

\begin{example}[Marginal independence constraints]
\label{ex: marginal}
The linear $P_{\varphi_{\GG,c}}$-invariants are a generalization of the linear constraints on classical Gaussian DAG models; i.e. the models $\mathcal{M}(\GG,c^\circ)$ where $c^\circ$ denotes the uncoloring from Definition~\ref{def: constant coloring}. 
Let $G=([m],E)$ be a DAG with at least one edge and $c$ be any coloring of $\GG$. 
For any $i,j\in [m]$ the vector  $s = t(i,j)$ satisfies $\psi_{\GG,c}^\ast(g(s)) = \sigma_{ij}$, according to the map~\eqref{eqn: trek map}. 
Suppose there is no trek between $i$ and $j$ in $\GG$, then $s= t(i,j) = 0$.  
Since $s = {0}$, the residue polynomial evaluates to $f(s) = 0$. 
Hence, by Theorem~\ref{thm: mobius}
\[
    0 = \psi_{\GG,c}^\ast(f(s)) = \sum_{s'\preceq s}\mu(s',s)\psi_{\GG,c}^\ast(g(s')) = \mu(0,0)\sigma_{ij} = \sigma_{ij}.
\]
Thus, Theorem~\ref{thm: mobius} recovers the only linear constraints that hold for uncolored graphical models; i.e., those corresponding to the marginal independence of $X_i$ and $X_j$ in $\mathcal{M}(G,c^\circ)$.
In statistics, the constraint $\sigma_{ij} = 0$ on the model $\mathcal{M}(\GG,c)$ says that all distributions on $X_1,\ldots, X_m$ with covariance matrix $\Sigma\in\mathcal{M}(\GG,c)$ satisfy the marginal independence relation $X_i\independent X_j$. 
These constraints have been used in applications to learn graphical representations of the dependence structure on multivariate data, called marginal or unconditional dependence graphs \cite{deligeorgaki2024combinatorial, markham2022transformational}. 
The linear model equivalence problem for uncolored DAGs $(\GG,c^\circ)$ is equivalent to deducing when two DAGs encode the same marginal independence constraints, and a solution to this problem is known \cite{deligeorgaki2024combinatorial, markham2022transformational}.
The linear $P_{\varphi_{\GG,c}}$-invariants generalize the marginal independence constraints for colored DAG models with arbitrary coloring.
\end{example}

By Theorem~\ref{thm: min lin subspace}, the linear $P_{\varphi}$-invariants are linear forms whose coefficient vectors belong to the kernel of the matrix $M_{\varphi}^t$. 
The computation of the linear $P_{\varphi}$-invariants will help us solve the linear equivalence problem (Problem~\ref{prob: linear equivalence}) via Corollary~\ref{cor: lin equiv reduction}. 
This computation involves deducing when the $\alpha_s$ are linear forms, as well as extracting combinatorial data from the poset $P_{\varphi}$, including a formula for the Möbius function $\mu(s',s)$ and a characterization of the residue vectors $r(s)$ for $s\in P_{\varphi}$. 
Generally speaking, we are interested in answering the following:
\begin{problem}\label{prob: P-invariants}
    Let $\varphi:\Theta\to\mathbb R^m$ have coordinate functions $f_1,\ldots, f_m\in\mathbb Z_{\geq 0}[\theta_1,\ldots, \theta_n]$. 
    \begin{enumerate}
        \item When can we take $\alpha_s\in \R[x_1,\ldots, x_m]$ for all $s\in P_{\varphi}$?  When is $\alpha_s$ a linear form?  If $\varphi(\Theta)$ satisfies global rational identifiability, when does $\psi^\ast(g(s))$ satisfy these conditions for all $s\in P_\varphi$?
        \item What is a closed-form formula for the Möbius function $\mu(s',s)$ for $P_{\varphi}$?
        \item What are the residue vectors $r(s)$ for $s\in P_{\varphi}$?  For which $s$ do we have $r(s) = 0$?
    \end{enumerate}
\end{problem}

Problem~\ref{prob: P-invariants} is particularly of interest for the colored DAG models $\mathcal M(\GG,c)$. 
We note a general solution to the latter part of Problem~\ref{prob: P-invariants}~(3). 

\begin{lemma}
\label{lem: lower cover}
    Let $\varphi:\Theta\to\mathbb R^m$ be a rational map with coordinate functions $f_1,\ldots, f_m\in\mathbb Z_{\geq 0}[\theta_1,\ldots, \theta_n]$. 
    Then, $r(s) = 0$ for $s\in P_{\varphi}$ if and only if $s = \bigvee_{i=1}^ks_i$, where $\{s_i\}_{i=1}^k$ is the lower cover of $s$ in $P_{\varphi}$. 
\end{lemma}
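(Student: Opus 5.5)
The plan is to reduce the statement to the single identity
\[
\bigvee_{s'\prec s}s' = \bigvee_{i=1}^k s_i,
\]
where $\{s_i\}_{i=1}^k$ is the lower cover of $s$. Granting this, the definition $r(s) = s - \bigvee_{s'\prec s}s'$ shows at once that $r(s)=0$ if and only if $s = \bigvee_{i=1}^k s_i$. Here $\bigvee$ is the coordinatewise maximum in $\mathbb Z^{\mathcal T}_{\geq 0}$, so the identity needs no join in $P_\varphi$.

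To prove the identity I will show both inequalities coordinatewise. Every $s_i$ satisfies $s_i\prec s$, so the maximum over the lower cover is at most the maximum over all strict predecessors. For the other inequality, take any $s'\prec s$. Since $P_\varphi$ is finite, there is a saturated chain $s'\preceq \cdots \prec s_i \prec s$ ending in a cover relation, so $s'\preceq s_i$ for some element $s_i$ of the lower cover. Because the order is coordinatewise domination, $s'_\tau\le (s_i)_\tau$ for every $\tau\in\mathcal T$. Taking maxima over all $s'\prec s$ then gives $\bigvee_{s'\prec s}s'\le \bigvee_i s_i$ coordinatewise.

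One degenerate case needs attention: if $s=\hat 0$, the lower cover and the set of strict predecessors are both empty. In that case both sides should be read as the empty maximum, namely the zero vector under the convention implicit in Definition~\ref{def: residue vector}, and the identity holds trivially. If $\hat 0\neq 0$, the statement then says $r(\hat 0)=\hat 0\neq 0$, which is consistent, since the empty join is $0\ne \hat 0$.

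There is no real obstacle here. The only points requiring care are that $\bigvee$ means the coordinatewise maximum, which does not depend on the join existing in $P_\varphi$, and that finiteness of $P_\varphi$ guarantees every strict predecessor of $s$ lies below some element of its lower cover.
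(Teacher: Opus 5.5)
Your proposal is correct and follows essentially the same route as the paper. The paper also observes that every strict predecessor of $s$ lies below some element of the lower cover, so the coordinatewise maximum over all strict predecessors collapses to that over the lower cover, giving $r(s)=s-\bigvee_{i=1}^k s_i$. Your explicit appeal to the finiteness of $P_\varphi$ and your treatment of the case $s=\hat 0$ are small additions that the paper leaves implicit.
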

\begin{proof}
By definition~\ref{def: residue vector}, $r(s) = s - \bigvee_{s'\prec s} s'$. We can partition the elements strictly preceding $s$ into its lower cover $\{s_1, \ldots, s_k\}$ and the remaining strict predecessors:
    \[
    r(s) = s-\bigvee_{s'\prec s} s' = s-\left(\bigvee_{i=1}^k s_i \right) \vee \left(\bigvee_{s'\prec s, s'\notin \{s_i\}} s'\right).
    \] 
    Note that every strict predecessor $s' \notin \{s_i\}$ must be bounded above by at least one element in the lower cover (i.e., $s' \preceq s_i$ for some $i$). Consequently,
    $
    \bigvee_{s'\prec s, s\notin \{s_i\}} s'\preceq \bigvee_{i=1}^k s_i.
    $
    Therefore, the second term is redundant in the maximum, and we have that
    \[
    r(s) = s-\bigvee_{s'\prec s} s' = s-\left(\bigvee_{i=1}^k s_i \right) \vee \left(\bigvee_{s'\prec s, s\notin \{s_i\}} s'\right)= s-\bigvee_{i=1}^k s_i.
    \] 
    It follows immediately that $r(s) = 0$ if and only if $s = \bigvee_{i=1}^k s_i$.
\end{proof}

We note that Lemma~\ref{lem: lower cover} is a characterization of $s\in P_{\varphi}$ for which $r(s) = 0$ in terms of the poset $P_{\varphi}$.  
Hence, to solve our problem for a semialgebraic set $\varphi(\Theta)$, we need one of two things:  either a sufficient description of the poset $P_{\varphi}$, or a combinatorial interpretation of the elements $s\in P_{\varphi}$ with $r(s) = 0$ in terms of $\varphi$.  
In particular, there remains work to be done in order to apply Lemma~\ref{lem: lower cover} to a given family of semialgebraic subsets $\mathcal F= \{\varphi_i(\Theta_i): \Theta_i\in \mathbb R^{n_i}, \varphi: \Theta_i\to \mathbb R^{m_i}, i\in\mathbb{Z}_{\geq0}\}$. 

\subsection{\texorpdfstring{$\pi$}{pi}-systems}
\label{subsec: pi-graphs}
A simple condition under which Problem~\ref{prob: P-invariants}~(1) has a positive answer is when we can take $\alpha_s = x_{i_s}$ for some $i_s\in[m]$, for all $s\in P_\varphi$. 
This occurs when the ground set of $P_\varphi$ is a subset of $\{0,c_1,\ldots, c_m\}$. 

\begin{definition}
    \label{def: pi-graph}
    Let $\varphi:\Theta\to\mathbb R^m$ be a rational map with coordinate functions $f_1,\ldots, f_m\in\mathbb Z_{\geq 0}[\theta_1,\ldots, \theta_n]$ with respective coefficient sequences $c_1,\ldots, c_m\in \mathbb Z_{\geq 0}^\mathcal{T}$.
    The poset $P_\varphi$ is a called a \emph{$\pi$-system} if its ground set is a subset of $\{0,c_1,\ldots, c_m\}$. 
\end{definition}

\begin{example}
    \label{ex: matroid pi-systems}
    Let $\mathcal V_{\varphi_{M, \mathfrak F'}}$ be an ideal variety for the matroid $M$, as defined in Section~\ref{subsec: ideal varieties}. 
    Then $P_{\varphi_{M, \mathfrak F'}}$ is a $\pi$-system.
    This is because $P_{\varphi_{M, \mathfrak F'}}$ is isomorphic to $\mathfrak F'$ as a subposet of $P_M$, since $\mathfrak F'$ is an order ideal in $P_M$. 
    For special families of matroids, such as graphic matroids, it would be interesting to fully characterize the subsets $\mathfrak F'\subseteq \mathfrak F$ of flats for which $P_{\varphi_{M, \mathfrak F'}}$ is a $\pi$-system.
\end{example}

Characterizing the $P_{\varphi_i}$ that are $\pi$-systems for a family of semialgebraic sets $\mathcal F= \{\varphi_i(\Theta_i): \Theta_i\in \mathbb R^{n_i}, \varphi: \Theta_i\to \mathbb R^{m_i}, i\in\mathbb{Z}_{\geq0}\}$ amounts to deducing when $\bigwedge_{j\in S}c_j$ is either equal to $0$ or a coefficient vector $c_k$ for a coordinate function $f_k$ of $\varphi_i$, for all $S\in 2^{[m]}\setminus\emptyset$.

\begin{figure}[t]
    \begin{subfigure}[b]{0.2\textwidth}
    \centering
    \begin{tikzpicture}
            \node[circle, draw, fill=red!50, minimum size=1pt, inner sep=1pt] (0) at (0, 0) {1};
            \node[circle, draw, fill=red!50, minimum size=1pt, inner sep=1pt] (1) at (1, -0.5) {2};
            \node[circle, draw, fill=red!50, minimum size=1pt, inner sep=1pt] (2) at (2, 0.5) {3};

             \draw[->, blue!70, >=stealth] (0) edge (1);
             \draw[->, blue!70, >=stealth] (1) edge (2);
             
    \end{tikzpicture}
    \caption{}
    \label{fig: colored a}
    \end{subfigure}
    \hfill
    \begin{subfigure}[b]{0.2\textwidth}
    \centering
    \begin{tikzpicture}
            \node[circle, draw, fill=red!50, minimum size=2pt, inner sep=2pt] (0) at (1,0.5) {1};
            \node[circle, draw, fill=red!50, minimum size=2pt, inner sep=2pt] (1) at (1,3.5) {2};
            \node[circle, draw, fill=red!50, minimum size=2pt, inner sep=2pt] (2) at (1.5,1.5) {3};
            \node[circle, draw, fill=red!50, minimum size=2pt, inner sep=2pt] (3) at (1.5,2.5) {4};
            \node[circle, draw, fill=red!50, minimum size=2pt, inner sep=2pt] (4) at (2.5,2) {5};

            \draw[->, blue!70, >=stealth] (0) edge (2);
            \draw[->, blue!70, >=stealth] (2) edge (4);
            \draw[->, blue!70, >=stealth] (1) edge (3);
            \draw[->, blue!70, >=stealth] (3) edge (4);
    \end{tikzpicture}
    \caption{}
    \label{fig: colored b}
    \end{subfigure}
    \hfill
    \begin{subfigure}[b]{0.2\textwidth}
    \centering
    \begin{tikzpicture}
            \node[circle, draw, fill=red!50, minimum size=2pt, inner sep=2pt] (0) at (0,0) {1};
            \node[circle, draw, fill=red!50, minimum size=2pt, inner sep=2pt] (1) at (0,1.5) {2};
            \node[circle, draw, fill=red!50, minimum size=2pt, inner sep=2pt] (2) at (1.5,1.5) {3};

            \draw[->, blue!70, >=stealth] (0) edge (1);
            \draw[->, blue!70, >=stealth] (0) edge (2);
            \draw[->, blue!70, >=stealth] (1) edge (2);
    \end{tikzpicture}
    \caption{}
    \label{fig: colored c}
    \end{subfigure}
    \hfill
    \begin{subfigure}[b]{0.3\textwidth}
    \centering
    \centering
    \begin{tikzpicture}

            \node[circle, draw, fill=red!50, minimum size=1pt, inner sep=1pt] (2) at (0, 0) {2};
            \node[circle, draw, fill=lime!60, minimum size=1pt, inner sep=1pt] (5) at (0, 3) {5};
            \node[circle, draw, fill=purple!50, minimum size=1pt, inner sep=1pt] (10) at (1.25,3.5) {11};
            \node[circle, draw, fill=cyan!50, minimum size=1pt, inner sep=1pt] (0) at (-1, 1.5) {1};
            \node[circle, draw, fill=yellow!50, minimum size=1pt, inner sep=1pt] (1) at (-2, 1.5) {10};
            \node[circle, draw, fill=cyan!50, minimum size=1pt, inner sep=1pt] (3) at (0.25, 1.5) {4};
            \node[circle, draw, fill=cyan!50, minimum size=1pt, inner sep=1pt] (4) at (1.5, 1.5) {3};
            \node[circle, draw, fill=cyan!50, minimum size=1pt, inner sep=1pt] (6) at (-1.2, 3.5) {7};
            \node[circle, draw, fill=gray!50, minimum size=1pt, inner sep=1pt] (7) at (0.2, 3.8) {8};
            \node[circle, draw, fill=pink!50, minimum size=1pt, inner sep=1pt] (8) at (-1, 2.5) {6};
            \node[circle, draw, fill=cyan!50, minimum size=1pt, inner sep=1pt] (9) at (1, 2.5) {9};

            \draw[->, black!80, >=stealth] (6) edge (7);
            \draw[->, brown!80, >=stealth] (7) edge (10);
            \draw[->, blue!80, >=stealth] (6) edge (5);
            \draw[->, magenta!80, >=stealth] (1) edge (8);
            \draw[->, orange!80, >=stealth] (8) edge (5);
            \draw[->, violet!80, >=stealth] (1) edge (2);
            \draw[->, red!80, >=stealth] (0) edge (5);
            \draw[->, blue!80, >=stealth] (0) edge (2);
            \draw[->, blue!80, >=stealth] (3) edge (2);
            \draw[->, blue!80, >=stealth] (3) edge (5);
            \draw[->, green!80, >=stealth] (4) edge (9);
            \draw[->, blue!80, >=stealth] (9) edge (5);
            \draw[->, blue!80, >=stealth] (4) edge (2);
            
    \end{tikzpicture}
    \caption{}
    \label{fig: colored d}
    \end{subfigure}
    
\caption{Some colored DAGs with the constant coloring.  (a) (b) are $\pi$-graphs.  (a), (b) and (c) have linearly independent nonzero residue vectors. 
} 
\label{fig: colored DAGs}
\end{figure}

\begin{example}
    \label{ex: pi-graph and non-pi-graph}
    For a colored DAG $(\GG,c)$,  $P_{\varphi_{\GG,c}}$ is a $\pi$-system if and only if for all nonempty sets $\{(i_1,j_1),\ldots, (i_k,j_k)\}\subset [m]\times [m]$ there exists $(i,j)$ such that $t(i,j) = t(i_1,j_1)\wedge \cdots \wedge t(i_k,j_k)$. 
    We call $(\GG,c)$ a \emph{$\pi$-graph} if $P_{\varphi_{\GG,c}}$ is a $\pi$-system, and we say $s\in P_{\varphi}$ is \emph{realized} by $(\GG,c)$ if $s\in \{t(i,j): i,j\in [m]\}\cup\{0\}$. 
    Consider the colored DAGs (a), (b) and (c) in Figure~\ref{fig: colored DAGs}. 
    It is easy to check that (a) and (b) are $\pi$-graphs.
    For graph (c), the poset contains the vectors $t(2,2)\wedge t(1,3) = (0,0,1,0,0)$ and $t(3,3)\wedge t(2,3) = (0,0,1,1,0)$ which are the coefficient vectors of the trek polynomials $\omega\lambda^2$ and $\omega\lambda^2 + \omega\lambda^3$ respectively.  
    It is also easy to check that no trek polynomial $p_{i,j}^{(\GG,c^\ast)}$ in (c) is equal to either of these two polynomials. 
    Therefore, the vectors are not realized by $(\GG,c)$. 
    Hence, (c) is not a $\pi$-graph.
\end{example}


For a colored DAG, being a $\pi$-graph is a sufficient condition to guarantee that the inversion formula \eqref{eqn:mobius} will yield linear constraints satisfied by the model $\mathcal{M}(G,c)$ whenever $r(s) = 0$. 
This motivates the following problem. 

\begin{problem}
\label{prob: ground set}
    Characterize the colored DAGs $(\GG,c)$ that are $\pi$-graphs. 
\end{problem}

More generally, the same question can be asked for other families of semialgebraic sets, such as for the matroid flat varieties or degenerate subvarieties of determinantal varieties in Section~\ref{section:prelim}.  
However, finite $\pi$-systems are closely related to finite topologies, which are known to be difficult to enumerate \cite{kleitman1975asymptotic}. 
So solving Problem~\ref{prob: ground set} for colored DAGs will depend on the structure of the graphs $(\GG,c)$. 
Similarly, solving the same problem for other families of semialgebraic sets will likely depend on the combinatorial properties of the specific objects enumerated by their coordinate functions.

\subsection{The ideal of the poset \texorpdfstring{$P_{\varphi}$}{}}
\label{subsec: the ideal of the poset}
When $P_\varphi$ is not a $\pi$-system, the vanishing ideal $I_{\varphi}$ of $\varphi(\Theta)$ is an elimination ideal of an ideal associated to the poset $P_{\varphi}$.  
This connection underlies the  way in which the poset $P_\varphi$ combinatorially encodes the kernel of the matrix $M_\varphi^t$ from Section~\ref{subsec: minimal linear subspace}. 


For a rational map $\varphi: \Theta\to \mathbb R^m$ with coordinate functions $f_1,\ldots, f_m\in\mathbb Z_{\geq 0}[\theta_1,\ldots, \theta_n]$, we let  $c_1,\ldots, c_m$ denote their coefficient vectors living $\mathbb Z_{\geq 0}^{\mathcal T}$ where $\mathcal T$ is the set of monomials defined in~\eqref{eqn: support set}. 
Let $2^{[m]}$ denote the power set of $[m]$, and let $\mathbb{R}[P_{\varphi}]$ denote the polynomial ring $\mathbb{R}[z_S : S\in 2^{[m]}\setminus\{\emptyset\}]$.
Define the ring homomorphism
\begin{equation}
    \label{eqn: poset map}
    \begin{split}
    \hat\varphi^\ast:&\ \mathbb{R}[P_{\varphi}] \to \mathbb{R}[\theta_1,\ldots, \theta_n]; \\ 
    \hat\varphi^\ast:&\ z_S\mapsto \bigwedge_{i\in S}c_i \cdot \mathbb{T}
    \end{split}
\end{equation}
where $\mathbb{T} = (\tau : \tau\in \mathcal{T})$ is a column vector.  
Let $I_{{\hat\varphi}} = \ker(\hat\varphi^\ast)$. 
Note that $\{i\}\in 2^{[m]}\setminus\{\emptyset\}$ for all $i\in [m]$.
Hence, 
$
\mathbb{R}[x_1,\ldots, x_m] \cong \mathbb{R}[z_{\{1\}},\ldots, z_{\{m\}}] \subset \R[P_{\varphi}].
$
Note further that $\hat\varphi^\ast$ is the algebraic pullback of the map
\begin{equation}
    \label{eqn: poset param map}
    \begin{split}
    &\hat\varphi: \Theta \to \mathbb{R}^{2^{[m]}\setminus\{\emptyset\}};\\ 
    &\hat\varphi: (\theta_1,\ldots, \theta_n)\mapsto{\left(\bigwedge_{i\in S}c_i \cdot \mathbb{T}\right)_{S \in 2^{[m]}\setminus\{\emptyset\}}}. 
    \end{split}
\end{equation}
The map $\hat \varphi$ is a coordinate extension of the map $\varphi$ into a (generally higher dimensional) affine space.  
That is, $\hat\varphi(\Theta)$ is a lifting of the semialgebraic set $\varphi(\Theta)$ from $\mathbb R^m$ to $\mathbb R^{P_\varphi}\simeq \mathbb R^{2^{[m]}\setminus\{\emptyset\}}$ that is parameterized by coordinate functions given by the elements of the poset $P_\varphi$.
In particular, $I_{\hat\varphi}$ is the vanishing ideal of $\hat\varphi(\Theta)$. 

\begin{lemma}
    \label{lem: existence of lifted poset invariants}
    Suppose that there exists $s\in P_\varphi$ with residue vector $r(s) = 0$.  Then there exist (linear) $P_{\hat\varphi}$-invariants.
\end{lemma}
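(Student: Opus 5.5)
The plan is to build the invariant directly from the Möbius inversion formula applied to the lifted map $\hat\varphi$. The key observation is that the coordinates of $\hat\varphi$ are indexed by nonempty subsets $S\subseteq[m]$, and every element of $P_\varphi$ is realized as a coordinate function. Indeed, each $s\in P_\varphi$ has the form $s=\bigwedge_{i\in S}c_i$ for some nonempty $S$, so $g(s)=s\mathbb T=\hat\varphi^\ast(z_S)$.

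First I will check that the poset $P_{\hat\varphi}$, built from the coefficient vectors $\bigwedge_{i\in S}c_i$ of the coordinates of $\hat\varphi$, coincides with $P_\varphi$. Meets of meets of the $c_i$ are again meets of the $c_i$, so the set of all meets is unchanged. The order, the residue vectors and the Möbius function are therefore the same. Moreover, $P_{\hat\varphi}$ is a $\pi$-system in the sense of Definition~\ref{def: pi-graph}: its ground set consists of coefficient vectors of coordinate functions of $\hat\varphi$.

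Next, I fix $s\in P_\varphi$ with $r(s)=0$. For each $s'\preceq s$ I choose a set $S_{s'}$ with $s'=\bigwedge_{i\in S_{s'}}c_i$. If $\hat 0$ is needed and equals $0$, it is still some meet, so such a set exists; this is a minor point to double-check. I then set $\alpha_{s'}=z_{S_{s'}}$, which is a linear form satisfying $\alpha_{s'}(\hat\varphi(\theta))=g(s')(\theta)$ identically, so its denominator is trivially nonvanishing. Since $r(s)=0$, we have $f(s)=r(s)\mathbb T=0$, so I take $\beta_s=0$, which is also a (zero) linear form. Finally I take $p=1$. Definition~\ref{def: poset invariants} then yields the polynomial
\[
q=-\sum_{s'\preceq s}\mu(s',s)\,z_{S_{s'}}\in\mathbb R[P_\varphi],
\]
which is a linear $P_{\hat\varphi}$-invariant. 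By Lemma~\ref{lem: in the vanishing ideal}, or directly by Theorem~\ref{thm: mobius}, it lies in $I_{\hat\varphi}$.

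The only step needing care is the identification $P_{\hat\varphi}=P_\varphi$ together with the well-definedness of the choice of representatives $S_{s'}$. Different sets $S$ can give the same meet, so several coordinates $z_S$ share one coefficient vector. I will handle this by fixing one representative per element; any choice works, since all of them satisfy the defining identity. I may also remark that $q$ is nonzero as a polynomial, because $\mu(s,s)=1$ and the $z_{S_{s'}}$ are distinct variables. This is what makes the invariant nontrivial.
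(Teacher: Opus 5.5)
Your proposal is correct and follows the same route as the paper's proof: take $\beta_s=0$ (since $f(s)=r(s)\mathbb T=0$), take $\alpha_{s'}=z_{S'}$ for any nonempty $S'$ with $s'=\bigwedge_{i\in S'}c_i$, and take $p=1$. Your additional checks fill in details the paper leaves implicit: that $P_{\hat\varphi}=P_\varphi$, that any choice of representative $S'$ works, and that $q\neq 0$ because $\mu(s,s)=1$.
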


\begin{proof}
    For every $s'\preceq s$, the rational functions $\beta_s = 0$ and $\alpha_{s'} = z_{S'}$ satisfies the conditions of Definition~\ref{def: poset invariants} so long as $S'\in 2^{[m]}\setminus \{\emptyset\}$ satisfies $s' = \bigwedge_{i\in S'}c_i \cdot \mathbb{T}$. 
\end{proof}

\begin{remark}
    \label{rem: lifted poset invariants via birationality}
    Aside from the $P_{\hat\varphi}$-invariants identified in Lemma~\ref{lem: existence of lifted poset invariants}, we can also obtain potentially nonlinear $P_{\hat\varphi}$-invariants.  
    One situation in which this can happen is when $\varphi(\Theta)$ satisfies global rational identifiability. In this case, we take $\alpha_{s'} = z_{S'}$ as in Lemma~\ref{lem: existence of lifted poset invariants}, $\beta_s = \psi^\ast(f_s(\theta))$ and the polynomial $p$ that clears the denominator of $\beta_s$. 
    This method applies, for example, to colored DAG models. 
\end{remark}

Note that the posets $P_\varphi$ and $P_{\hat\varphi}$ are equal.  
However, the $P_\varphi$-invariants are polynomials in $I_\varphi$ while the $P_{\hat\varphi}$-invariants are polynomials in $I_{\hat{\varphi}}$, with equality occurring when $\hat \varphi = \varphi$; e.g., when $P_\varphi$ is a $\pi$-system.  
The $P_{\hat\varphi}$-invariants are the natural way to identify the linear forms defining the linear span of $\varphi(\Theta)$. 
This is accomplished via the relationship between the vanishing ideals $I_\varphi$ and $I_{\hat\varphi}$.

\begin{lemma}
    \label{lem: elimination}
    $I_{\varphi} = I_{\hat\varphi}\cap \mathbb{R}[x_1,\ldots, x_m]$. 
\end{lemma}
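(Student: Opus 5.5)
The plan is to compare the two ring homomorphisms directly. With $\mathbb{R}[x_1,\ldots,x_m]$ identified with $\mathbb{R}[z_{\{1\}},\ldots,z_{\{m\}}]\subset \mathbb{R}[P_\varphi]$ via $x_i\mapsto z_{\{i\}}$, I will show that the restriction of $\hat\varphi^\ast$ to this subring coincides with the pullback $\varphi^\ast$. The equality of kernels then gives the equality of ideals.

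For the first step, note that for a singleton $S=\{i\}$ the meet $\bigwedge_{j\in S}c_j$ is just $c_i$. Hence
\[
\hat\varphi^\ast(z_{\{i\}}) = c_i\cdot\mathbb{T} = \sum_{\tau\in\mathcal T}c_{i,\tau}\tau = f_i(\theta) = \varphi^\ast(x_i),
\]
using the decomposition $\varphi(\theta)=M_\varphi\mathbb{T}$ from \eqref{eqn: map decomp}. Both maps are $\mathbb{R}$-algebra homomorphisms agreeing on generators, so $\hat\varphi^\ast|_{\mathbb{R}[x_1,\ldots,x_m]}=\varphi^\ast$. Taking kernels,
\[
I_{\hat\varphi}\cap\mathbb{R}[x_1,\ldots,x_m]=\ker(\hat\varphi^\ast)\cap\mathbb{R}[x_1,\ldots,x_m]=\ker(\varphi^\ast|)=\ker(\varphi^\ast)=I_\varphi.
\]

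The main point needing care is the identification of vanishing ideals with kernels of pullbacks, i.e.\ $I_\varphi=\ker(\varphi^\ast)$ and $I_{\hat\varphi}=\ker(\hat\varphi^\ast)$. The paper asserts the first in the introduction, and the second holds by definition together with its description as the vanishing ideal of $\hat\varphi(\Theta)$. Both rest on $\Theta$ containing a nonempty open subset of $\mathbb{R}^n$: a polynomial in $\theta$ vanishing on $\Theta$ must then be the zero polynomial.

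If one prefers to avoid kernels altogether, the same conclusion follows pointwise. The coordinate projection $\pi:\mathbb{R}^{2^{[m]}\setminus\{\emptyset\}}\to\mathbb{R}^m$ onto the singleton coordinates satisfies $\pi\circ\hat\varphi=\varphi$. So for $h\in\mathbb{R}[x_1,\ldots,x_m]$, viewed in $\mathbb{R}[P_\varphi]$ as $h\circ\pi$, we have that $h$ vanishes on $\hat\varphi(\Theta)$ if and only if $h$ vanishes on $\varphi(\Theta)$. This gives both inclusions at once.
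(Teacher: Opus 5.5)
Your proof is correct and is essentially the paper's argument. The paper also uses only that $\hat\varphi$ is a coordinate extension of $\varphi$, getting one inclusion through the pullbacks and the other through pointwise vanishing on $\hat\varphi(\Theta)$ versus $\varphi(\Theta)$; your observations that $\hat\varphi^\ast$ restricts to $\varphi^\ast$ on $\mathbb{R}[z_{\{1\}},\ldots,z_{\{m\}}]$, equivalently that $\pi\circ\hat\varphi=\varphi$, make that step explicit and give both inclusions at once. The open-subset hypothesis you flag is only needed to reconcile the paper's mixed definitions (vanishing ideal for $I_\varphi$, kernel for $I_{\hat\varphi}$), and either consistent choice of definition yields the equality with no hypothesis on $\Theta$.
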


\begin{proof}
    Let $f \in I_{\varphi}$.  
    Then $f\in \ker(\varphi^\ast)$, and hence $f\in \ker(\hat\varphi^\ast)\cap \mathbb R[x_1,\ldots, x_m]$ since $\hat\varphi$ is a coordinate extension of $\varphi$. 
    Hence, $f\in I_{\hat\varphi}\cap \mathbb R[x_1,\ldots, x_m]$.
    Conversely, if $f\in I_{\hat\varphi}\cap \mathbb R[x_1,\ldots, x_m]$, then $f$ is a polynomial function in the variables $x_1,\ldots,x_m$ that evaluates to $0$ on all points $\hat\varphi(\theta)$ for $\theta \in \Theta$. 
    Thus, $f$ also evaluates to $0$ at all points $\varphi(\theta)$ for $\theta \in \Theta$.
    Therefore, $f\in I_{\varphi}$. 
\end{proof}

The following corollary is an immediate consequence of Lemma~\ref{lem: elimination}, and it has consequences for the linear equivalence problem  (Problem~\ref{prob: linear equivalence}). 

\begin{corollary}
    \label{cor: degree 1 components}
    The degree $1$ component of the ideal $I_{\varphi}$ equals the degree $1$ component of the ideal $I_{\hat\varphi}\cap \mathbb{R}[x_1,\ldots, x_m]$. 
\end{corollary}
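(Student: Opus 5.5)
The plan is to read the corollary directly off Lemma~\ref{lem: elimination} by intersecting both sides with the space of linear forms. Write $\mathbb{R}[x_1,\ldots,x_m]_1$ for the degree $1$ component of $\mathbb{R}[x_1,\ldots,x_m]$, i.e.\ the real vector space of linear forms $\sum_{i=1}^m\alpha_i x_i$. Under the identification $x_i \leftrightarrow z_{\{i\}}$ from Subsection~\ref{subsec: the ideal of the poset}, this space sits inside $\mathbb{R}[P_\varphi]$ as the span of the variables $z_{\{1\}},\ldots,z_{\{m\}}$.

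First I would fix the meaning of ``degree $1$ component''. For $I_\varphi$ I take $I_\varphi\cap \mathbb{R}[x_1,\ldots,x_m]_1$, the set of linear forms in $I_\varphi$. This reading is the relevant one, since Theorem~\ref{thm: min lin subspace} describes the linear span of $\varphi(\Theta)$ through exactly these forms; it does not require $I_\varphi$ to be homogeneous. For $J := I_{\hat\varphi}\cap\mathbb{R}[x_1,\ldots,x_m]$, which is an ideal of $\mathbb{R}[x_1,\ldots,x_m]$, I take $J\cap \mathbb{R}[x_1,\ldots,x_m]_1$ in the same way.

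Next I would apply Lemma~\ref{lem: elimination}, which gives $I_\varphi = J$ as subsets of $\mathbb{R}[x_1,\ldots,x_m]$. Intersecting both sides with $\mathbb{R}[x_1,\ldots,x_m]_1$ then gives equality of the degree $1$ components. This is immediate.

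If one instead wants ``degree $1$ component'' to mean the graded piece of a homogeneous ideal, I would add one remark. Lemma~\ref{lem: elimination} is an equality of ideals, so any grading-dependent invariant of the two ideals agrees. The only point to check is that the linear forms of $J$ are the same whether they are viewed in $\mathbb{R}[P_\varphi]$ or in $\mathbb{R}[x_1,\ldots,x_m]$. This holds because $\mathbb{R}[x_1,\ldots,x_m]\cong\mathbb{R}[z_{\{1\}},\ldots,z_{\{m\}}]$ is a graded subring of $\mathbb{R}[P_\varphi]$ with the standard grading. I do not expect a real obstacle anywhere; the only care needed is this bookkeeping of the identification $x_i\leftrightarrow z_{\{i\}}$ and of the grading.
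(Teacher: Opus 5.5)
Your proposal is correct and matches the paper. The paper records this corollary as an immediate consequence of Lemma~\ref{lem: elimination}: since $I_\varphi = I_{\hat\varphi}\cap\mathbb{R}[x_1,\ldots,x_m]$ as ideals of $\mathbb{R}[x_1,\ldots,x_m]$, intersecting both sides with the space of linear forms gives the claim, and your extra bookkeeping about the identification $x_i\leftrightarrow z_{\{i\}}$ is harmless.
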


\subsection{Solving the linear equivalence problem combinatorially}
\label{subsec: combinatorial linear equiv}

By Theorem~\ref{thm: min lin subspace}, the kernel of the matrix $M_{\hat\varphi}^t$ yields the set of linear forms
defining 
the linear span of $\hat\varphi(\Theta)$. 
For a family of semialgebraic sets $\mathcal F= \{\varphi_i(\Theta_i): \Theta_i\in \mathbb R^{n_i}, \varphi: \Theta_i\to \mathbb R^{m_i}, i\in\mathbb{Z}_{\geq0}\}$, 
the following theorem allows us to reduce the linear equivalence problem for $\mathcal F$ (Problem~\ref{prob: linear equivalence}) to the combinatorial problems~\ref{prob: P-invariants}~(2) and~(3). 
Specifically, Problem~\ref{prob: linear equivalence} is solved using combinatorial data; namely, a formula for the Möbius function of $P_{\varphi}$ and a description of the residue vectors $r(s)$.  

\begin{theorem}
    \label{thm: lin independence}
    Let $\varphi(\Theta)$ be a semialgebraic set, with $\Theta$ containing an open subset of $\mathbb R^n$ and $\varphi:\theta\to (f_1(\theta),\ldots, f_m(\theta))$, where the coordinate functions have coefficient vectors $c_1,\ldots, c_m\in\mathbb Z_{\geq 0}^\mathcal{T}$.  
    If the nonzero residue vectors $r(s)$ for $s\in P_{\varphi}$ are linearly independent, then the linear span of $\hat\varphi(\Theta)$ is defined by 
    \begin{itemize}
        \item the linear $P_{\hat\varphi}$-invariants for which $r(s)= 0$, and
        \item the relations $z_S - z_{S'}$ for subsets $S, S'\in2^{[m]}\setminus\{\emptyset\}$ satisfying $\bigwedge_{i\in S}c_i = \bigwedge_{i\in S'}c_i\in P_{\varphi}$.
    \end{itemize} 
\end{theorem}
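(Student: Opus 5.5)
We apply Theorem~\ref{thm: min lin subspace} to the lifted map $\hat\varphi$. Its coordinate functions $\hat\varphi^\ast(z_S)=\bigwedge_{i\in S}c_i\cdot\mathbb T$ have nonnegative integer coefficients. They are supported on the same monomial set $\mathcal T$, so the toric part $\phi_{\mathcal T}$ is unchanged. We take the homogeneity hypothesis on $I_{\mathcal T}$ from the setting of Theorem~\ref{thm: min lin subspace}. Then the linear span of $\hat\varphi(\Theta)$ is the zero locus of the linear forms whose coefficient vectors lie in $\ker(M_{\hat\varphi}^t)$, where $M_{\hat\varphi}$ has rows $\bigwedge_{i\in S}c_i$ for $S\in 2^{[m]}\setminus\{\emptyset\}$. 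The task is therefore to show that these linear forms are spanned by the two listed families, i.e. to compute the left kernel of $M_{\hat\varphi}$.

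First we quotient by repeated rows. Whenever $\bigwedge_{i\in S}c_i=\bigwedge_{i\in S'}c_i$, the vector $e_S-e_{S'}$ lies in $\ker(M_{\hat\varphi}^t)$; this gives the second family. Modulo these relations, any kernel vector can be rewritten as one supported on a set of representatives, with one coordinate $z_{S(s)}$ for each $s\in P_\varphi$. So it suffices to find the left kernel of the matrix $A$ with rows indexed by $P_\varphi$, the row of $s$ being $s$ itself. (If $\hat0=0$ is not realized by a meet, the zero row simply does not occur; if it is realized, its coordinate is a kernel vector and is accounted for in the next step with $s=\hat0$.)

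Next we change basis using Möbius inversion. Let $R$ be the matrix with rows $r(s)$ for $s\in P_\varphi$. Proposition~\ref{prop: poset sum} says $A=ZR$, where $Z$ is the zeta matrix of $P_\varphi$, with $Z_{s,s'}=1$ if $s'\preceq s$. $Z$ is invertible (unitriangular with respect to a linear extension), and its inverse is $\mu$, giving $R=\mu A$ (Theorem~\ref{thm: mobius} at the vector level). Hence
\[
y^tA=0 \iff (Z^ty)^tR=0.
\]
Since the nonzero rows of $R$ are linearly independent, $w^tR=0$ holds exactly when $w$ is supported on $\{s: r(s)=0\}$. So $\ker(A^t)=\{\mu^tw : \operatorname{supp}(w)\subseteq\{s:r(s)=0\}\}$, and a basis is given by the vectors $\mu^te_s$ for $r(s)=0$. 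The vector $\mu^te_s$ has entries $\mu(s',s)$ for $s'\preceq s$. It therefore encodes the linear form $\sum_{s'\preceq s}\mu(s',s)z_{S(s')}$, which is (up to sign) exactly the linear $P_{\hat\varphi}$-invariant with $\beta_s=0$ and $\alpha_{s'}=z_{S(s')}$ from Lemma~\ref{lem: existence of lifted poset invariants}. This gives the first family.

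Finally we combine the two steps. Every $y\in\ker(M_{\hat\varphi}^t)$ differs from a representative-supported vector by a combination of the $e_S-e_{S'}$, and that representative vector is a combination of the invariants found above. Conversely, every form in either family lies in $I_{\hat\varphi}$, by Lemma~\ref{lem: in the vanishing ideal} or directly. We expect the main care to lie in the factorization $A=ZR$ and the dimension count: the independence hypothesis is used exactly to conclude that the solution set of $w^tR=0$ is precisely the coordinate subspace on $\{r(s)=0\}$, and invertibility of $Z$ transports this back to the original coordinates.
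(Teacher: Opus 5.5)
Your proposal is correct and follows the paper's proof. Both reduce via Theorem~\ref{thm: min lin subspace} to the left kernel of $M_{\hat\varphi}$, split off the duplicate-row relations $z_S-z_{S'}$, and use Möbius inversion as an invertible row operation that replaces each row $s$ by its residue $r(s)$, so that linear independence of the nonzero residues leaves exactly the zero-residue rows. Your factorization $A=ZR$, giving $\ker(A^t)=\{\mu^t w:\mathrm{supp}(w)\subseteq\{s: r(s)=0\}\}$, just makes explicit the independence of these invariants, which the paper obtains by a dimension count.

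Two small remarks. You do not need to import the homogeneity hypothesis: $\alpha^tM_{\hat\varphi}\phi_{\mathcal T}(\theta)=0$ on an open set already forces $M_{\hat\varphi}^t\alpha=0$, because distinct monomials are linearly independent polynomials. Also, $\hat 0=\bigwedge_{i\in[m]}c_i$ is always realized as a meet, so the case in your parenthetical where it is not realized never arises.
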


\begin{proof}
    Recall that the rows of the matrix $M_{\hat\varphi}$ are indexed by the subsets $S \in 2^{[m]}\setminus\{\emptyset\}$. 
    Consequently, the left nullspace of $M_{\hat\varphi}$ decomposes naturally into two types of linear relations: trivial relations arising from duplicate rows, and combinatorial relations arising from the poset structure. 
    First, any subsets $S, S'$ that evaluate to the same element in $P_{\varphi}$ correspond to identical rows in $M_{\hat\varphi}$, yielding the relations $z_S - z_{S'} = 0$. 

    Consider the submatrix formed by the unique rows of $M_{\hat\varphi}$ indexed by the elements $s \in P_{\varphi}$. 
    Since $s = \sum_{s'\preceq s}r(s')$ according to Proposition~\ref{prop: poset sum}, the Möbius inversion formula in Theorem~\ref{thm: mobius} amounts to applying elementary row operations, collected in a matrix $E$, to the matrix $M_{\hat\varphi}$.  
    Let $F_{\hat\varphi} = EM_{\hat\varphi}$.
    The matrix $F_{\hat\varphi}$ whose rows are the residue vectors $r(s)$ for $s\in P_{\varphi}$ and zero vectors (from duplicate rows) is row equivalent to $M_{\hat\varphi}$.  
    Since the nonzero rows are linearly independent, it follows that the kernel of $M_{\hat\varphi}^t$ corresponds to the rows of zeros in $F_{\hat\varphi}$. 
    Let $d = \dim(\ker(M_{\hat\varphi}^t))$.  
    Since there are $d$ rows of zeros in $F_{\hat\varphi}$, each corresponding to a $P_{\hat\varphi}$-invariant vanishing on $\hat\varphi(\Theta)$, it follows that these $P_{\hat\varphi}$-invariants have a solution set equal to the minimal linear subspace containing $\hat\varphi(\Theta)$. 
\end{proof}

When $P_{\varphi}$ is a $\pi$-system, we have $\R[P_{\varphi}] = \R[x_1,\ldots, x_m]$, and the relations in Theorem~\ref{thm: lin independence} define the linear span of $\varphi(\Theta)$. 
Otherwise, by Corollary~\ref{cor: degree 1 components}, a simple (linear) elimination step applied to the ideal generated by the relations in Theorem~\ref{thm: lin independence} recovers the ideal of the linear span of $\varphi(\Theta)$.
When the linear independence assumption in Theorem~\ref{thm: lin independence} fails, one can apply the combinatorial procedure iteratively using the poset on the residue vectors to further row-reduce the matrix $M_{\hat\varphi}$ (see Section~\ref{subsec: iterated}).

\begin{example}[Colored DAG models]
    \label{ex: elimination}
    We first consider some concrete examples for colored DAG models. 
    Consider the four colored DAGs in Figure~\ref{fig: colored DAGs}.\\
    
    (a) The colored DAG in Figure~\ref{fig: colored a} is a $\pi$-graph (see Example \ref{ex: pi-graph and non-pi-graph}).  
    Hence, $\R[P_{\varphi_{\GG,c^\ast}}] = \R[\sigma_{ij}: 1\leq i \leq j\leq m]$. 
    It follows that the linear $P_{\hat\varphi_{\GG,c^\ast}}$-invariants are simply the linear $P_{\varphi_{\GG,c^\ast}}$-invariants. 
    Moreover, its set of nonzero residue vectors are linearly independent. 
    Hence, by Theorem~\ref{thm: lin independence}, the linear $P_{\varphi_{\GG,c^\ast}}$-invariants for the $t(i,j)\in P_{\GG,c^\ast}$ with $r(t(i,j)) = 0$ appear when row-reducing $M_{\varphi_{\GG,c^\ast}}$ according to the Möbius inversion formula in Theorem~\ref{thm: mobius}.  
    They are seen to define the linear span of $\mathcal{M}(\GG,c^\ast)$ by performing this row-reduction on the augmented matrix $[M_{\varphi_{\GG,c^\ast}}\mid \Sigma]$, where $\Sigma = (\sigma_{ij} : 1\leq i \leq j \leq m ) = \varphi_{\mathcal{T}}(\theta)$ for arbitrary $\theta = (\lambda_1,\ldots, \lambda_e,\omega_1,\ldots, \omega_n)\in\R^{e}\times \R^n_{>0}$. 
    The poset $P_{\varphi_{\GG,c^\ast}}$, with nodes labeled by residue vectors in red, and matrix the $[M_{\varphi_{\GG,c^\ast}}\mid \Sigma]$ after the row-reduction (that is, $[F_{\varphi_{\GG,c^\ast}} \mid E \Sigma]$ using the notation in the proof of Theorem \ref{thm: lin independence}) are in Figure~\ref{fig: poset and matrix a}.  
    Exactly one row is equal to $0$ (which is the single $0$ residue in the poset to the left), and its linear equation in the $\sigma_{ij}$-coordinates is the associated linear $P_{\varphi_{\GG,c^\ast}}$-invariant.  
    Note that coefficients of this linear form are precisely the entries of the single kernel vector for $M_{\varphi_{\GG,c^\ast}}^t$ found in Example~\ref{ex: motivation}.\\
    
    \begin{figure}[t]
    \begin{subfigure}[b]{0.5\textwidth}
    \centering
    {\tiny
    \begin{tikzpicture}[scale = 1.1]
            \node (0) at (0, 0) {(0,0,0,0,0)};
            \node (1) at (-2, 1) {(0,1,0,0,0)};
            \node (2) at (0, 1) {(0,0,1,0,0)};
            \node (3) at (2, 1) {(1,0,0,0,0)};

            \node (4) at (-2, 2) {(0,1,0,1,0)};
            \node (5) at (2, 2) {(1,0,1,0,0)};

            \node (6) at (2, 3) {(1,0,1,0,1)};

             \draw[-,  >=stealth] (0) edge (1);
             \draw[-,  >=stealth] (0) edge (2);
             \draw[-,  >=stealth] (0) edge (3);

             \draw[-,  >=stealth] (1) edge (4);
             \draw[-,  >=stealth] (2) edge (5);
             \draw[-,  >=stealth] (3) edge (5);

             \draw[-,  >=stealth] (5) edge (6);


            \node (r1) at (-2 - 0.55, 1 - 0.3) {\textcolor{red}{(0,1,0,0,0)}};
            \node (r2) at (0 + 0.55, 1 - 0.3) {\textcolor{red}{(0,0,1,0,0)}};
            \node (r3) at (2 + 0.55, 1 - 0.3) {\textcolor{red}{(1,0,0,0,0)}};
            \node (r4) at (-2 - 0.55, 2 - 0.3) {\textcolor{red}{(0,0,0,1,0)}};
            \node (r5) at (2 + 0.55, 2 - 0.3) {\textcolor{red}{(0,0,0,0,0)}};
            \node (r6) at (2 + 0.55, 3 - 0.3) {\textcolor{red}{(0,0,0,0,1)}};
             
    \end{tikzpicture}
    }
    \caption{}
    \label{fig: poset a}
    \end{subfigure}
    \hfill
    \begin{subfigure}[b]{0.4\textwidth}
    \centering
    {\small
    \[
    \begin{pmatrix}
        1 & 0 & 0 & 0 & 0 & \sigma_{11}\\
        0 & 1 & 0 & 0 & 0 & \sigma_{12}\\
        0 & 0 & 0 & 0 & 0 & \sigma_{22} - \sigma_{13} - \sigma_{11}\\
        0 & 0 & 1 & 0 & 0 & \sigma_{13}\\
        0 & 0 & 0 & 1 & 0 & \sigma_{23} - \sigma_{12}\\
        0 & 0 & 0 & 0 & 1 & \sigma_{33} - \sigma_{11} - \sigma_{13}
    \end{pmatrix}
    \]
    }
    \caption{}
    \label{fig: matrix a}
    \end{subfigure}

\caption{The poset $P_{\varphi_{\GG,c^\ast}}$ for $(\GG,c)$ in Figure~\ref{fig: colored a}, and the augmented matrix $[M_{\varphi_{\GG,c^\ast}}\mid \Sigma]$ following row-reduction according to the Möbius inversion formula in Theorem~\ref{thm: mobius}.}
\label{fig: poset and matrix a}
\end{figure}

(b) The colored DAG in Figure~\ref{fig: colored b} has the same poset as in Figure~\ref{fig: poset a}, with the exception that the top-most element $(1, 0, 1, 0, 1)$ is replaced by $s = t(5,5) =  (1, 0, 2, 0, 2)$.  
This new element has residue vector $r(s) = (0, 0, 1, 0, 2)$. 
Since the residue vectors are linearly independent, Theorem~\ref{thm: lin independence} again applies, and the minimal linear subspace containing $\mathcal{M}(\GG,c^\ast)$ is the zero locus of the linear forms 
\begin{gather*}
    \sigma_{33} - \sigma_{11} - \sigma_{15}, \quad \sigma_{11} - \sigma_{22},\quad \sigma_{13} - \sigma_{24},\quad\sigma_{33} - \sigma_{44},\quad \sigma_{35} - \sigma_{45},\\ 
    \sigma_{15} - \sigma_{25}, \quad
    \sigma_{34}, \quad 
    \sigma_{23}, \quad 
    \sigma_{14}, \quad 
    \sigma_{12}.
\end{gather*}

Unlike the graph in (a), we have binomial relations in addition to the linear $P_{\varphi_{\GG,c^\ast}}$-invariant. 
These relations are easily read from the poset by noting equalities $t(i,j) = t(k,\ell)$ between trek vectors.\\

(c) The graph in Figure~\ref{fig: colored c} is not a $\pi$-graph, meaning that the ground set contains the elements $z_{22,13} =t(2,2)\wedge t(1,3) = (0,0,1,0,0)$ and $z_{23, 33} = t(3,3)\wedge t(2,3) = (0,0,1,1,0)$ in addition to the vectors $t(i,j)$ for $i,j\in[m]$. 
These two elements correspond, respectively, to the variables $z_{22,13}, z_{23, 33}\in \mathbb R[P_{\varphi_{\GG,c^\ast}}]$ that are not in $\mathbb R[\sigma_{ij}: 1\leq i\leq j\leq m]$.
The poset and the associated augmented matrix are depicted in Figure~\ref{fig: poset and matrix c}. 
Again, the nonzero residue vectors are linearly independent, and in this case there are no relations $z_{S} = z_{S'}$. 
So, by Theorem~\ref{thm: lin independence}, the three linear forms defining the minimal linear subspace containing $\hat\varphi_{{\GG,c^\ast}}(\R^{e}\times \R^n_{>0})$ are the $P_{\hat{\varphi}_{\GG,c^\ast}}$-invariants 
\[
\sigma_{22} - \sigma_{11} - z_{22,13},\quad \sigma_{13} - \sigma_{12} - z_{22,13}, \quad \sigma_{23} - \sigma_{12} - z_{23, 33}, 
\]
which include variables $z_{22,13},z_{23, 33}$ for elements of the poset not corresponding to $t(i,j)$ for $i,j\in[m]$. 
By Corollary~\ref{cor: degree 1 components}, the linear ideal defining the linear span of $\mathcal{M}(\GG,c^\ast)$ is obtained by eliminating these variables from the ideal generated by these three linear forms.  This yields a single generator
\[
\sigma_{22} - \sigma_{11} - (\sigma_{13} - \sigma_{12}). 
\]

\begin{figure}[t]
    \begin{subfigure}[b]{0.9\textwidth}
    \centering
    {\tiny
    \begin{tikzpicture}[scale = 1.1]
            \node (0) at (0, 0) {(0,0,0,0,0)};
            \node (1) at (-2, 1) {(1,0,0,0,0)};
            \node (2) at (0, 1) {(0,0,1,0,0)};
            \node (3) at (2, 1) {(0,1,0,0,0)};

            \node (4) at (-2, 2) {(1,0,1,0,0)};
            \node (7) at (0, 2) {(0,0,1,1,0)};
            \node (5) at (2, 2) {(0,1,1,0,0)};

            \node (6) at (2, 3) {(0,1,1,1,0)};
            \node (8) at (-2, 3) {(1,0,2,1,1)};

             \draw[-,  >=stealth] (0) edge (1);
             \draw[-,  >=stealth] (0) edge (2);
             \draw[-,  >=stealth] (0) edge (3);

             \draw[-,  >=stealth] (1) edge (4);
             \draw[-,  >=stealth] (2) edge (4);
             \draw[-,  >=stealth] (2) edge (5);
             \draw[-,  >=stealth] (2) edge (7);
             \draw[-,  >=stealth] (3) edge (5);

             \draw[-,  >=stealth] (5) edge (6);
             \draw[-,  >=stealth] (4) edge (8);
             \draw[-,  >=stealth] (7) edge (8);
             \draw[-,  >=stealth] (7) edge (6);


            \node (r1) at (-2 - 0.55, 1 - 0.3) {\textcolor{red}{(1,0,0,0,0)}};
            \node (r2) at (0 + 0.55, 1 - 0.3) {\textcolor{red}{(0,0,1,0,0)}};
            \node (r7) at (0 + 0.55, 2 - 0.3) {\textcolor{red}{(0,0,0,1,0)}};
            \node (r3) at (2 + 0.55, 1 - 0.3) {\textcolor{red}{(0,1,0,0,0)}};
            \node (r4) at (-2 - 0.55, 2 - 0.3) {\textcolor{red}{(0,0,0,0,0)}};
            \node (r5) at (2 + 0.55, 2 - 0.3) {\textcolor{red}{(0,0,0,0,0)}};
            \node (r6) at (2 + 0.55, 3 - 0.3) {\textcolor{red}{(0,0,0,0,0)}};
            \node (r8) at (-2 - 0.55, 3 - 0.3) {\textcolor{red}{(0,0,1,0,1)}};
             
    \end{tikzpicture}
    }
    \caption{}
    \label{fig: poset c}
    \end{subfigure}
    
    \begin{subfigure}[b]{0.45\textwidth}
    \centering
    {\small
    \[
    \begin{pmatrix}
        1 & 0 & 0 & 0 & 0 & \sigma_{11}\\
        1 & 0 & 1 & 0 & 0 & \sigma_{22}\\
        1 & 0 & 2 & 1 & 1 & \sigma_{33}\\
        0 & 1 & 0 & 0 & 0 & \sigma_{12}\\
        0 & 1 & 1 & 0 & 0 & \sigma_{13}\\
        0 & 1 & 1 & 1 & 0 & \sigma_{23}\\
        0 & 0 & 1 & 0 & 0 & z_{22,13}\\
        0 & 0 & 1 & 1 & 0 & z_{23,33}\\
    \end{pmatrix}
    \]
    }
    \caption{}
    \label{fig: matrix c}
    \end{subfigure}
    \hfill
    \begin{subfigure}[b]{0.5\textwidth}
    \centering
    {\small
    \[
    \begin{pmatrix}
        1 & 0 & 0 & 0 & 0 & \sigma_{11} \\
        0 & 0 & 0 & 0 & 0 & \sigma_{22} - \sigma_{11} - z_{22,13}\\
        0 & 0 & 1 & 0 & 1 & \sigma_{33} - \sigma_{11} - z_{23,33} + z_{22,13}\\
        0 & 1 & 0 & 0 & 0 & \sigma_{12}\\
        0 & 0 & 0 & 0 & 0 & \sigma_{13} - \sigma_{12} - z_{22,13}\\
        0 & 0 & 0 & 0 & 0 & \sigma_{23} - \sigma_{12} - z_{23, 33}\\
        0 & 0 & 1 & 0 & 0 & z_{22,13}\\
        0 & 0 & 0 & 1 & 0 & z_{23,33} - z_{22,13}\\
    \end{pmatrix}
    \]
    }
    \caption{}
    \label{fig: matrix c reduced}
    \end{subfigure}

\caption{The poset $P_{\varphi_{\GG,c^\ast}}$ for $(\GG,c)$ in Figure~\ref{fig: colored c}, and the augmented matrix $[M_{\varphi_{\GG,c^\ast}}\mid \Sigma]$ both before and after row-reduction according to the Möbius inversion formula in Theorem~\ref{thm: mobius}.}
\label{fig: poset and matrix c}
\end{figure}

(d) This last example, depicted in Figure~\ref{fig: colored d}, shows how linear dependencies among the nonzero residue vectors can arise.  
This happens when a subset of treks that has already appeared in a minimal form, perhaps as a $t(i,j)$ or an intersection thereof, appears in another $t(k,\ell)$ but with excessive multiplicity. 
In the graph (d), note that there is a unique trek, with trek monomial $\omega\lambda^3$, between nodes $5$ and $11$.  
Hence, the vector $t(5,11)$ will be an atom in $P_{\varphi_{\GG,c^\ast}}$. 
A second atom is $t(3,5)$, with the unique trek $\omega\lambda^2$.  
Both of these atoms are equal to their residue vectors. 
They are also covered by $t(2,5)$ where there are two copies of each trek $\omega\lambda^2$ and $\omega\lambda^3$.  
Hence, the residue $r(t(2,5))$ will contain one copy of each trek, inducing the dependence relation
\[
r(t(2,5)) - r(t(3,5)) - r(t(5,11)) = 0. 
\]
This demonstrates that the combinatorial reduction of Möbius inversion is not always sufficient to fully convert the matrix $M_{\varphi_{\GG,c^\ast}}$ into a fully reduced form.  
On the other hand, these additional relations on the residues are easily seen to be extracted by placing the same partial order on the set of residues, and repeating the process for $P_{\GG,c^\ast}$.
In particular, an iteration of the combinatorial process above may be needed to extract the desired algebraic result for some graphs (see Section~\ref{subsec: iterated}).  
This appears to be mainly a concern for more refined colorings. 
For instance, changing the color of any one edge or node in the graph in Figure~\ref{fig: colored d}, produces a colored DAG with linearly independent nonzero residues. 
\end{example}

\begin{example}[Linear phylogenetic invariants]
    \label{ex: phylogenetic elimination}
    Recall the Jukes-Cantor model on the 4-leaf tree $\mathcal{T}$ introduced in Section~\ref{subsec: phylogenetics}, parameterized by the variables $a_1, \ldots, a_5$ and $b_1, \ldots, b_5$. 
    The reduced map $\varphi_{JC}: \Theta \to \mathbb{R}^{15}$ defines a semialgebraic set $\varphi_{JC}(\Theta)\subset \mathbb{R}^{15}$. 
    
    The poset $P_{\varphi_{JC}}$ constructed from the coefficient vectors of the 15 coordinate polynomials --which have length 32, corresponding to the distinct monomials in $\mathcal{T}$-- has a ground set consisting of 142 elements (including the minimal zero element). The elimination process described in Theorem~\ref{thm: lin independence} recovers exactly two linear constraints for this model, emerging through two distinct combinatorial mechanisms. 
    
    The first linear constraint is obtained directly by applying the Möbius inversion formula from Theorem~\ref{thm: mobius} to elements $s \in P_\varphi$ where the residue vector $r(s)$ vanishes:
    \begin{equation}
        \label{eq: lake1}
        f_1 = p_{\texttt{ACCA}} + p_{\texttt{ACAG}} + p_{\texttt{ACGC}} - p_{\texttt{ACAC}} - p_{\texttt{ACCG}} - p_{\texttt{ACGA}} = 0.
    \end{equation}
    The second constraint, however, is obtained by identifying a linear dependence among the non-zero residue vectors themselves, analogous to the algebraic behavior demonstrated in Example~\ref{ex: elimination}(d):
    \begin{equation}
        \label{eq: lake2}
        f_2 = p_{\texttt{ACGT}} + p_{\texttt{ACCA}} - p_{\texttt{ACGA}} - p_{\texttt{ACCG}} = 0.
    \end{equation}
    
    These two linear forms define the minimal linear subspace containing $\varphi_{JC}(\Theta)$. Since the linear span is invariant under change of basis, this subspace is equivalently spanned by the classical \emph{Lake invariants} for the 4-leaf topology, which are given by $f_2$ and $f_1 - f_2$ \cite{lake1987}. These invariants are fundamentally important in phylogenetics because they successfully distinguish between the three possible 4-leaf tree topologies and remain robust to data generated by mixtures of trees. By applying our framework, we bypass traditional algebraic elimination algorithms, providing a purely combinatorial derivation of the invariants required to solve the tree distinguishability problem. The code verifying the poset dimension and elimination ideal are provided here: \url{https://github.com/marinagarrote/poset-parametrizations}. 
\end{example}

\begin{example}
    \label{ex: linear relations for graphic matroids}
    Let $\mathcal V_{\varphi_{M,\mathfrak F'}}$ be an ideal variety, as defined in Section~\ref{subsec: ideal varieties}.  
    Then $P_{\varphi_{M, \mathfrak F'}}$ is isomorphic to $\mathfrak F'$ as a subposet of the lattice of flats $P_M$ of $M$.
    Moreover, $P_{\varphi_{M,\mathfrak F'}}$ is a $\pi$-system, meaning that $\hat\varphi_{M, \mathfrak F'} = \varphi_{M, \mathfrak F'}$. 
    By Example~\ref{ex: poset for graphic matroid}, the nonzero residue vectors of $P_{\varphi_{M,\mathfrak F'}}$ are $r(F)$ where $F\in \mathfrak F'$ is an atom in $P_M$, and these vectors are all linearly independent. 
    Generalizing Example~\ref{ex: matroid poset invariants}, the linear $P_{\varphi_{M, \mathfrak F'}}$-invariants are $\sum_{F'\preceq F \textrm{ in } \mathfrak F'}\mu(F',F)x_{F'}\in I_{\varphi_{M, \mathfrak F'}}$ for $F\in \mathfrak F'$ a non-atom in $P_M$. 
    By Theorem~\ref{thm: lin independence}, these polynomials define the linear span of the ideal variety $\mathcal V_{\varphi_{M,\mathfrak F'}}$. 
\end{example}

\begin{example}
    \label{ex: linear relations for secants}
    Let $\mathcal V_{\varphi_1}$ be the variety corresponding to the map $\varphi_1$ introduced in Example \ref{ex: poset for a secant subvariety}. The non-zero residue vectors are linearly independent as they are standard basis vectors. By Theorem \ref{thm: lin independence} it follows that the linear span of the image of $\hat{\varphi}_1$ is given by linear $P_{\hat{\varphi_1}}$-invariants.

    To find the exact relations we first note that the elements $s\in P_{\hat{\varphi_1}}$ with $r(s)$ are exactly the coefficient vectors coming from the map $\varphi_1$. Denote the coefficient vector of the $ij$-th coordinate function of $\varphi_1$ as $c_{ij}$. Then, for example, $c_{11} = (1, 0, 0, 1, 0, 0)$ and $r(c_{ij})=0$ for any $i, j\in [3]$.
    
    Each $c_{ij}$ has two standard basis vectors in the lower cover. The meet of any two standard basis vectors is $\hat{0}$. Thus, the $P_{\hat{\varphi}_1}$-invariants have the following form: for every $c_{ij}$ we have $x_{ij} - z_{ij, ik} - z_{ij, kj}=0$ for any $k\neq i$ or $ k\neq j$. Indeed, the elements $c_{ij}\wedge c_{ik}$ and $c_{ij}\wedge c_{kj}$ are exactly the two vectors in the lower cover of $c_{ij}$. Besides the $P_{\hat{\varphi}_1}$-invariants, there are also trivial relations of the form $z_{ij, ik}-z_{ij, il}$ and $z_{ij, kj} - z_{ij, lj}$ for $i, j, k, l\in [3]$, since $c_{ij}\wedge c_{ik} = c_{ij}\wedge c_{il}$ and $c_{ij}\wedge c_{kj} = c_{ij}\wedge c_{lj}$. 

    The linear relations above include variables $z_{ij, kl}$. By  Corollary~\ref{cor: degree 1 components}, to find the linear ideal defining the span of $\mathcal V_{\varphi_1}$ these variables need to be eliminated. The elimination yields generators of the form 
    \[
    x_{ij} + x_{kl} - x_{il} - x_{kj}=0, \text{ for } i, j, k, l\in [3].
    \]

    The analogous reasoning also works for the variety $\mathcal V_{\varphi_2}$. There are only four elements $s\in P_{\varphi_2}$ with $r(s)=0$, namely, $c_{11}$, $c_{12}$, $c_{21}$ and $c_{22}$. The corresponding four linear $P_{\hat{\varphi}_2}$-invariants involve variables $z_{ij, kl}$. After the elimination there is a single generator of the linear span of $\mathcal V_{\varphi_2}$:
    \[
    x_{11}+x_{22}-x_{12}-x_{21}=0.
    \]
\end{example}

\subsection{Combinatorial toric reparameterization}
\label{subsec: combinatorial toric}

This section summarizes the consequences of the theory developed in Sections~\ref{subsec: minimal linear subspace} and~\ref{sec: posets} in regard to the toric reparameterization problem (Problem~\ref{prob: toric}). 
Recall that the map $\hat\varphi$ defined in~\eqref{eqn: poset param map} can be factored as $\hat\varphi(\theta) = M_{\hat\varphi}\phi_{\mathcal{T}}(\theta)$, where $\theta \in\Theta$ and $\phi_{\mathcal{T}}$ is the toric (monomial) map in~\eqref{eqn: nonlinear part}. 
Combining this observation with Lemma~\ref{lem: elimination} yields the following in regard to Problem~\ref{prob: toric}. 

\begin{theorem}
    \label{thm: toric condition}
    Let $\varphi(\Theta)$ be a semialgebraic set as in~\eqref{eqn: rational map} with $f_1,\ldots, f_m\in\mathbb Z_{\geq0}[\theta_1,\ldots, \theta_n]$ for which the residue vectors $r(s)$ for $s\in P_\varphi$ are either the $0$-vector or a scalar multiple of a standard basis vector.  
    Then $I_{\varphi}$ is a toric ideal up to a linear change of coordinates.  
    Hence, $\mathcal V_\varphi$ is a toric variety. 
\end{theorem}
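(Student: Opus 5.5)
The plan is to use the factorization $\hat\varphi(\theta)=M_{\hat\varphi}\phi_{\mathcal T}(\theta)$ together with the Möbius row reduction from the proof of Theorem~\ref{thm: lin independence}. Under the hypothesis, those residue rows are (multiples of) coordinate vectors, so the lifted variety becomes a monomial variety after a linear change of coordinates. The remaining work is to push this down to $\mathcal V_\varphi$ via Lemma~\ref{lem: elimination}.

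\emph{Step 1: linear coordinates on $\mathbb R^{P_\varphi}$.} For each $s\in P_\varphi$, choose a representative $S_s\in 2^{[m]}\setminus\{\emptyset\}$ with $\bigwedge_{i\in S_s}c_i=s$. Define new coordinates
\[
y_s=\sum_{s'\preceq s}\mu(s',s)\,z_{S_{s'}},
\]
and keep the differences $z_S-z_{S_s}$ for the non-chosen subsets $S$ with the same meet $s$. This change of coordinates is unitriangular with respect to a linear extension of $P_\varphi$, with inverse $z_{S_s}=\sum_{s'\preceq s}y_{s'}$. By Theorem~\ref{thm: mobius}, the pullback of $y_s$ under $\hat\varphi$ is the residue polynomial $f(s)=r(s)\mathbb T$. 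By hypothesis, $f(s)$ is either $0$ or $k_s\tau_s$ with $k_s\in\mathbb Z_{>0}$ and $\tau_s\in\mathcal T$. Rescaling $y_s\mapsto y_s/k_s$ turns $\hat\varphi$ into a map whose coordinates are $0$ or monomials, possibly repeated. The differences $z_S-z_{S_s}$ pull back to $0$.

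\emph{Step 2: $I_{\hat\varphi}$ is toric in these coordinates.} In the coordinates of Step 1, $I_{\hat\varphi}$ is generated by three kinds of elements. First, the linear forms $y_s$ with $r(s)=0$; these are exactly the linear $P_{\hat\varphi}$-invariants of Lemma~\ref{lem: existence of lifted poset invariants}. Second, the differences $z_S-z_{S_s}$. Third, the kernel of the monomial map $y_s\mapsto\tau_s$, which contains $y_s-y_{s'}$ whenever $\tau_s=\tau_{s'}$. Each of these is a binomial or a variable, and the kernel of a monomial map is a toric ideal. Hence $\mathcal V_{\hat\varphi}$ is a toric variety, namely the toric variety $\mathcal V_{\mathcal T'}$ of the set $\mathcal T'$ of monomials occurring as residues, embedded linearly.

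\emph{Step 3: descend to $I_\varphi$.} By Lemma~\ref{lem: elimination}, $I_\varphi=I_{\hat\varphi}\cap\mathbb R[x_1,\ldots,x_m]$. Geometrically, $\mathcal V_\varphi$ is the image of $\mathcal V_{\hat\varphi}$ under the coordinate projection onto $x_i=z_{\{i\}}$. In $y$-coordinates this projection reads $x_i=\sum_{s'\preceq c_i}y_{s'}$, that is, the linear map $\rho\mapsto M_\varphi\rho$ restricted to $\mathbb R^{\mathcal T'}$.

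To conclude that $\mathcal V_\varphi$ is toric up to a linear change of coordinates, I would show that this linear map is injective on the linear span of $\mathcal V_{\mathcal T'}$. I first want to check that every $\tau\in\mathcal T$ occurs as some $\tau_s$. The proof of Proposition~\ref{prop: poset sum} shows that the $\tau$-coordinate of $c_i$ is distributed over residues of a chain below $c_i$. Those residues are multiples of $e_\tau$, so each $\tau$ in the support does appear as a residue monomial, giving $\mathcal T'=\mathcal T$. Since $I_{\mathcal T}$ is homogeneous (as in Theorem~\ref{thm: min lin subspace}), $\mathcal V_{\mathcal T}$ is nondegenerate. It then suffices to prove $\operatorname{rank}M_\varphi=|\mathcal T|$, for then $\mathcal V_\varphi=M_\varphi\mathcal V_{\mathcal T}$ is a linear isomorph of a toric variety.

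\emph{Main obstacle.} The hard step is proving the rank statement: each $e_\tau$ should lie in the row span of $M_\varphi$ itself, not merely in the row span of $M_{\hat\varphi}$. The difficulty is that residues of meets that are not generators involve the auxiliary variables $z_S$, as Example~\ref{ex: elimination}(c) illustrates.

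My first attempt is an induction up $P_\varphi$. Suppose $c_i$ has residue $k e_\tau$. Then I would try to express each element strictly below $c_i$, for this $\tau$-coordinate, through chains of generators $c_j$, using that each $e_\tau$ is "introduced" exactly once along the chain of the proof of Proposition~\ref{prop: poset sum}.

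If that fails, I would fall back on a weaker claim. The elimination of a toric ideal by a linear projection that identifies monomials only through the combinations $x_i=\sum k_s\tau_s$ still yields, after the linear substitution $\tau\mapsto$ the appropriate combination of the $x_i$, a binomial ideal. I would then verify this directly on the generating binomials of $I_{\mathcal T}$.
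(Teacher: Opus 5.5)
Your Steps 1 and 2 are the paper's proof: Möbius inversion is a unitriangular change of coordinates that turns $\hat\varphi$ into a monomial map, so $I_{\hat\varphi}$ is toric in the new coordinates. The gap is the step you flag yourself, and it cannot be closed. Both the rank claim and your fallback fail under the stated hypotheses.

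Take $\Theta=\R^2$, $\theta=(\lambda,\omega)$, and
\[
\varphi(\lambda,\omega)=\bigl(\omega+\omega\lambda,\ \omega\lambda^2+\omega\lambda^3,\ \omega+\omega\lambda^2,\ \omega\lambda+\omega\lambda^3\bigr).
\]
In the basis $\omega,\omega\lambda,\omega\lambda^2,\omega\lambda^3$ this gives $c_1=(1,1,0,0)$, $c_2=(0,0,1,1)$, $c_3=(1,0,1,0)$, $c_4=(0,1,0,1)$. The proper meets are $0$ and the four standard basis vectors, and each $c_i$ is the join of the two atoms below it. So every residue is $0$ or some $e_\tau$, and the hypothesis holds.

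Yet $c_1+c_2=c_3+c_4$, so $\operatorname{rank}M_\varphi=3<|\mathcal T|$. Hence $\mathcal V_\varphi$ is the image of the cone over the twisted cubic under projection from $(1,-1,-1,1)$. That point lies on the secant line through the complex points $\lambda=\pm i$. Concretely, $\mathcal V_\varphi$ is the cone, inside the hyperplane $x_1+x_2=x_3+x_4$, over the plane cubic $x_1(x_1+x_2-x_3)^2=x_2x_3^2$. This cubic has a node at $(1:-1:0)$, an acnode over $\R$.

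A projective toric curve is the bijective image of $\mathbb P^1$ (one torus orbit plus two limit points), so it is unibranch at every point. A node has two branches, and linear changes of coordinates preserve homogeneity and singularity type. Therefore $I_\varphi$ is not toric in any linear coordinates.

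Consequently, no induction can put every $e_\tau$ in the row span of $M_\varphi$; you only get it in the row span of $M_{\hat\varphi}$. Elimination also does not preserve binomiality: $I_{\hat\varphi}\cap\R[x_1,\ldots,x_m]$ is the ideal of a linear projection of a toric variety.

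The paper's proof passes over the same point by applying the elimination theorem to a binomial Gröbner basis of $I_{\hat\varphi}$. That basis is binomial only in the Möbius-transformed coordinates. In those coordinates $\R[x_1,\ldots,x_m]$ is not generated by a subset of the variables, so the elimination theorem does not apply. In effect, your proposal has located a genuine error in the statement.

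What your argument does establish:
\begin{itemize}
\item $I_{\hat\varphi}$ is toric up to a linear change of coordinates.
\item $I_\varphi$ is also toric up to a linear change of coordinates when $M_\varphi$ additionally has full column rank. Then your Step 3 goes through verbatim. This happens, e.g., when $P_\varphi$ is a $\pi$-system: every row of $M_{\hat\varphi}$ is then $0$ or a row of $M_\varphi$. So the residues, which by your argument include a multiple of every $e_\tau$, lie in the row span of $M_\varphi$.
\end{itemize}

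A minor point: nondegeneracy of $\mathcal V_{\mathcal T}$ follows from the monomials being distinct and $\Theta$ containing an open set. Homogeneity of $I_{\mathcal T}$ is not assumed in this statement and is not needed.
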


\begin{proof}
    Proposition~\ref{prop: poset sum} implies the existence of a matrix $E_{\hat\varphi}$ corresponding to a sequence of elementary row operations on $M_{\hat\varphi}$ such that the rows of $E_{\hat\varphi}M_{\hat\varphi}$ are the residue vectors $r(s)$ for $s\in P_{\varphi}$. 
    Since $E_{\hat\varphi}$ is invertible, the kernel of the pullback of the polynomial map $E_{\hat\varphi}M_{\hat\varphi}\phi_{\mathcal{T}}(\theta)$ is equal to $I_{\hat\varphi}$. 
    Since the rows of $E_{\hat\varphi}M_{\hat\varphi}$ are either $0$-vectors or scalar multiples of standard basis vectors then $E_{\hat\varphi}M_{\hat\varphi}\phi_{\mathcal{T}}(\theta)$ is a monomial map.  
    Hence, up to the linear change of coordinates given by $E_{\hat\varphi}$, the ideal $I_{\hat\varphi}$ is toric. 
    In particular, in these new coordinates it has a Gröbner basis with respect to any term order that consists of only binomials. 
    By \cite[Theorem 2, Chapter 3.1]{cox1997ideals}, a generating set for $I_{\hat\varphi}\cap \mathbb{R}[x_1,\ldots, x_m]$ is given by the elements of such a Gröbner basis (for appropriately chosen term orders) that belong to the ring $\mathbb{R}[x_1,\ldots, x_m]$.  
    By Lemma~\ref{lem: elimination}, these binomials are a generating set for $I_{\varphi}$. 
    Since $I_{\varphi}$ is a prime ideal, it follows that it is toric up to a linear change of coordinates. 
\end{proof}

\begin{example}
    \label{ex: toric graphic matroids}
    Consider an ideal variety $\mathcal V_{\varphi_{M, \mathfrak F'}}$ as defined in Section~\ref{subsec: ideal varieties}.  
    By Example~\ref{ex: poset for graphic matroid}, the nonzero residue vectors of $P_{\varphi_{M, \mathfrak F'}}$ are the standard basis vectors $b_{\{e\}}\in \mathbb R^{\mathfrak F'}$ such that $\{e\}$ is a singleton in $\mathfrak F'$. 
    Hence, by Theorem~\ref{thm: toric condition} every ideal variety is toric.  
    For instance, when $M$ is the graphic matroid for the undirected graph $G = (V,E)$, then $\mathcal V_{\varphi_{M, \mathfrak F'}}$ is the toric variety of the edge polytope of the subgraph $H = (V, E')$ of $G$ with $E' = \bigcup_{i = 1}^sF_i$ where $F_1,\ldots, F_s$ is the (unique) antichain in $P_M$ satisfying $\mathfrak F' = \langle F_1,\ldots, F_s\rangle$. 
    By Example~\ref{ex: linear relations for graphic matroids}, this toric variety is embedded in the subspace of $\mathbb R^{\mathfrak F'}$ defined by the linear relations $\sum_{F'\preceq F \textrm{ in } \mathfrak F'}\mu(F',F)x_{F'}$ for $F\in \mathfrak F'$ a non-atom in $P_M$. 

    We note that matroid flat varieties $\mathcal V_{\varphi_{M, \mathfrak F'}}$ that are not ideal varieties can also be toric.  Take, for instance, $M$ to be the graphic matroid of the $4$-cycle in Example~\ref{ex: matroid poset invariants}, and let 
    \begin{equation}
    \label{eqn: matroid flat variety toric}
    \mathfrak F' = \{\{\{1,2\}\}, \{\{3,4\}\},\{\{1,2\},\{3,4\}\}, \{\{1,2\},\{2,4\}\}, \{\{1,3\},\{3,4\}\}\}. 
    \end{equation}
    The resulting matroid flat variety $\mathcal V_{\varphi_{M, \mathfrak F'}}$ has poset isomorphic to the subposet of $P_M$ depicted in Figure~\ref{fig: subposet graphic matroid}. 
    \begin{figure}[t]
    \centering
        \begin{tikzpicture}
            \node (0) at (0, 0) {\footnotesize$\emptyset$};
            \node (1) at (-1, 1) {\footnotesize$\{\{1,2\}\}$};
            \node (4) at (1, 1) {\footnotesize$\{\{3,4\}\}$};

            \node (5) at (-3, 2) {\footnotesize$\{\{1,2\}, \{2,4\}\}$};
            \node (6) at (0, 2) {\footnotesize$\{\{1,2\}, \{3,4\}\}$};
            \node (7) at (3, 2) {\footnotesize$\{\{1,3\}, \{3,4\}\}$};

             \draw[-,  >=stealth] (0) edge (1);
             \draw[-,  >=stealth] (0) edge (4);
             
             \draw[-,  >=stealth] (1) edge (6);
             \draw[-,  >=stealth] (4) edge (6);

             \draw[-,  >=stealth] (1) edge (5);
             \draw[-,  >=stealth] (4) edge (7);
             
        \end{tikzpicture}
    \caption{The subposet of the poset $P_M$ in Figure~\ref{fig: 4-cycle poset} on the set $\mathfrak F'$ in~\eqref{eqn: matroid flat variety toric}.}
    \label{fig: subposet graphic matroid}
    \end{figure}
    The nonzero residue vectors of $P_{\varphi_{M,\mathfrak F'}}$ correspond to the elements covering precisely one element, and a quick computation of the Möbius function of the above poset reveals that these are all standard basis vectors. 
    Hence, the matroid flat variety $\mathcal V_{\varphi_{M, \mathfrak F'}}$ is toric by Theorem~\ref{thm: toric condition}. 
\end{example}

Example~\ref{ex: toric graphic matroids} shows that ideal varieties for matroids are always toric, but they are not the only toric matroid flat varieties.  
It would be interesting to clarify when a matroid flat variety is toric, for instance, in the case of graphic matroids. 

\begin{problem}
    \label{prob: toric matroid flat varieties}
    What are the toric matroid flat varieties? In particular, when $M$ is a graphic matroid, for which subsets of flats $\mathfrak F'$ of $M$ is  $\mathcal V_{\varphi_{M, \mathfrak F'}}$ toric?
\end{problem}

\begin{example}
    \label{ex: toric subvarieties of secants}
    From Example \ref{ex: poset for a secant subvariety} we see that there are degenerate subvarieties of secant varieties that are toric. In the poset $P_{\varphi_1}$, vectors with non-zero residues are those in the upper cover of $\hat{0}$. Those form the standard basis of $\mathbb R^6$. It follows that the variety $\mathcal V_{\varphi_1}$ is toric. This example generalizes for larger $m, n$. One can check that by adding constraints $s_i=s_j$ for $i, j\in [m]$ and $q_i=q_j$ for $i, j\in [n]$ the resulting image is a toric variety. On the other hand, the condition is not satisfied for the poset $P_{\varphi_2}$. Using the computational method introduced in \cite{kahle2025efficiently}, one can check that there is no linear change of coordinates that makes the vanishing ideal $I_{\varphi_2}$ toric.
\end{example}

\begin{example}
    \label{ex: toric}
    It is seen from the matrices in Figures~\ref{fig: matrix a} and~\ref{fig: matrix c reduced} that the colored graphs in Figures~\ref{fig: colored a} and~\ref{fig: colored b} have toric vanishing ideals up to a linear change of coordinates, according to Theorem~\ref{thm: toric condition}. 
    Since the graphs~\ref{fig: colored a} and~\ref{fig: colored b} are $\pi$-graphs, their colored DAG models $\mathcal{M}(\GG,c^\ast)$ have as their Zariski closures a toric variety whose characters are given by the residue vectors. 
    The Möbius inversion in Theorem~\ref{thm: mobius} simply recovers these by a linear transformation of the ambient space, undoing the obfuscation of the toric embedding created by the (statistically) natural parameterization of the variety via~\eqref{eqn: trek map}. 
\end{example}

\subsection{Iterated combinatorial row reduction}
\label{subsec: iterated}

The preceding sections show that combinatorial properties of the poset $P_\varphi$ can help us deduce the linear span of $\varphi(\Theta)$ and provide a sufficient condition for $\mathcal V_\varphi$ to be toric. 
Underlying these observations is the Möbius inversion formula in Theorem~\ref{thm: mobius} which specifies a combinatorial row reduction of the matrix $M_{\hat\varphi}$.  
This combinatorial row reduction need not fully reduce $M_{\hat\varphi}$.  
However, iterating the process by constructing a poset on the residue vectors for $P_\varphi$ can further reduce $M_{\hat\varphi}$ to reveal solutions not exposed by $P_\varphi$ alone. 
We now describe this iterative procedure, starting with an example. 

\begin{example}
    \label{ex: combinatorial row reduction}
    For the graph in Figure~\ref{fig: colored c}, the residue vectors for $P_{\varphi_{\GG,c^\ast}}$, are linearly independent but not standard basis vectors.  
    However, we can iterate the combinatorial row reduction by constructing the poset $P_{\varphi_{\GG,c^\ast}}^{(1)}$ on the residue vectors for $P_{\varphi_{\GG,c^\ast}}$ using the same partial order $\preceq$; e.g., the coordinate-wise dominance order. 
    This new poset $P_{\varphi_{\GG,c^\ast}}^{(1)}$ is depicted in Figure~\ref{fig: graph c second iteration poset}.
    Applying Theorem~\ref{thm: mobius} to this poset we obtain the following residue of a residue vector:
    \[
    (0,0,0,0,1) = r(r(t(3,3)) = r(t(3,3)) - r(t(2,2)\wedge t(1,3)) = (0,0,1,0,1) - (0,0,1,0,0)
    \]
    Substituting this equation into the row for $\sigma_{33}$ in the augmented matrix in Figure~\ref{fig: matrix c reduced} amounts to an additional row reduction steps that result in a matrix, depicted in Figure~\ref{fig: matrix c second iteration}, whose nonzero rows are standard basis vectors.  
    This shows how the combinatorial row reduction via the posets $P_\varphi$ can be iterated twice to reveal the toric structure of the variety. 
    Since, for this graph, we introduced the additional variables $z_{22,13}, z_{23,33}$, the Zariski closure of $\mathcal{M}(\GG,c^\ast)$ is a toric variety obtained by projecting the toric variety defined by these (twice iterated) residue vectors in Figure~\ref{fig: matrix c second iteration} along the coordinates $z_{22, 13}, z_{23, 33}$.  
\end{example}

\begin{figure}[t]
    \begin{subfigure}[b]{0.6\textwidth}
    \centering
    {\tiny
    \begin{tikzpicture}[scale = 1.1]
            \node (0) at (1, 0) {(0,0,0,0,0)};
            \node (1) at (-2, 1) {(1,0,0,0,0)};
            \node (2) at (0, 1) {(0,1,0,0,0)};
            \node (3) at (2, 1) {(0,0,1,0,0)};
            \node (4) at (2, 2) {(0,0,1,0,1)};
            \node (5) at (4, 1) {(0,0,0,1,0)};

             \draw[-,  >=stealth] (0) edge (1);
             \draw[-,  >=stealth] (0) edge (2);
             \draw[-,  >=stealth] (0) edge (3);
             \draw[-,  >=stealth] (0) edge (5);

             \draw[-,  >=stealth] (3) edge (4);


            \node (r1) at (-2 - 0.55, 1 - 0.3) {\textcolor{red}{(1,0,0,0,0)}};
            \node (r2) at (0 + 0.55, 1 - 0.3) {\textcolor{red}{(0,1,0,0,0)}};
            \node (r7) at (4 + 0.55, 1 - 0.3) {\textcolor{red}{(0,0,0,1,0)}};
            \node (r3) at (2 + 0.55, 1 - 0.3) {\textcolor{red}{(0,0,1,0,0)}};
            \node (r5) at (2 + 0.55, 2 - 0.3) {\textcolor{red}{(0,0,0,0,1)}};
             
    \end{tikzpicture}
    }
    \caption{}
    \label{fig: graph c second iteration poset}
    \end{subfigure}
    \hfill
    \begin{subfigure}[b]{0.35\textwidth}
    \centering
    {\small
    \[
    \begin{pmatrix}
        1 & 0 & 0 & 0 & 0 & \sigma_{11} \\
        0 & 0 & 0 & 0 & 0 & \sigma_{22} - \sigma_{11} - z_{22,13}\\
        0 & 0 & 0 & 0 & 1 & \sigma_{33} - \sigma_{11} - z_{23,33}\\
        0 & 1 & 0 & 0 & 0 & \sigma_{12}\\
        0 & 0 & 0 & 0 & 0 & \sigma_{13} - \sigma_{12} - z_{22,13}\\
        0 & 0 & 0 & 0 & 0 & \sigma_{23} - \sigma_{12} - z_{23, 33}\\
        0 & 0 & 1 & 0 & 0 & z_{22,13}\\
        0 & 0 & 0 & 1 & 0 & z_{23,33} - z_{22,13}\\
    \end{pmatrix}
    \]
    }
    \caption{}
    \label{fig: matrix c second iteration}
    \end{subfigure}

\caption{(a) The poset $P_{\varphi_{\GG,c^\ast}}^{(1)}$ on the residue vectors of $P_{\varphi_{\GG,c^\ast}}$ for $(\GG,c^\ast)$ in Figure~\ref{fig: colored c}, and (b) the augmented matrix $[M_{\GG,c^\ast}\mid \Sigma]$ after further row-reducting the matrix in Figure~\ref{fig: matrix c reduced} according to the Möbius inversion formula in Theorem~\ref{thm: mobius} applied to $P_{\varphi_{\GG,c^\ast}}^{(1)}$.}
\label{fig: poset and matrix c second iteration}
\end{figure}

Example~\ref{ex: combinatorial row reduction} raises an interesting question related to the distinction between row reduction on $M_{\hat\varphi}$ and the combinatorial row reduction on $M_{\hat\varphi}$ performed by the Möbius inversion formula in Theorem~\ref{thm: mobius}. 
As we saw in Section~\ref{subsec: combinatorial linear equiv}, replacing each row $s$ in $M_{\hat\varphi}$ with its residue vector $r(s)$ amounts to applying a set of row operations to the matrix $M_{\hat\varphi}$.  
However, as shown in Example~\ref{ex: combinatorial row reduction}, we may need to iterate this combinatorial row reduction to reveal our sufficient condition for toricity of the variety $\mathcal V_\varphi$; i.e., that the nonzero rows of the reduced matrix are scalars of standard basis vectors. 

To formalize this, 
recall that the partial order $\preceq$ on $P_\varphi$ is the coordinate-wise dominance order. 
Set $P_\varphi^{(0)}:= P_\varphi$, and let $P_\varphi^{(k)}:= (r(P_\varphi^{(k-1)}),\preceq)$ for $k>0$ where 
\[
r(P_\varphi^{(k-1)}) = \{r\in \mathbb Z_{\geq 0}^{\mathcal T} : r = \sum_{s'\preceq s \textrm{ in } P_\varphi^{(k-1)}}\mu(s',s)s' \textrm{ for some } s\in P_{\varphi}^{(k-1)}\}. 
\]
In other words, $P_\varphi^{(k)}$ is the poset with ground set the residue vectors of $P_\varphi^{(k-1)}$ under the coordinate-wise dominance order $\preceq$. 
The contents of this paper show that $\mathcal V_\varphi=\overline{\varphi(\Theta)}$ will be isomorphic to a toric variety if there exists $k\geq 0$ such that 
$r(P_\varphi^{(k)})$ is a collection of (nonnegative) scalars of standard basis vectors. 
Given a family of semialgebraic sets $\mathcal F$ with coordinate functions having nonnegative integral coefficients, one way to identify subfamilies $\mathcal F'\subset \mathcal F$ with toric reparametrizations is to characterize the $\varphi(\Theta)\in \mathcal F$ for which such a $k\geq 0$ exists. 
So it would be interesting to see a solution to the following problem for families of semialgebraic sets that are of interest in the literature. 

\begin{problem}
    \label{prob: stratified toric DAGs}
    Fix a nonnegative integer $k$, and consider the family of semialgebraic sets $\mathcal F= \{\varphi_i(\Theta_i): \Theta_i\in \mathbb R^{n_i}, \varphi: \Theta_i\to \mathbb R^{m_i}, i\in\mathbb{Z}_{\geq0}\}$.  Characterize $\varphi_i(\Theta_i)\in \mathcal F$ such that $P_{\varphi}^{(k)}$ is a poset on nonnegative scalars of standard basis vectors. 
\end{problem}

A solution to Problem~\ref{prob: stratified toric DAGs} amounts to a characterization of the $\varphi(\Theta)\in \mathcal F$ with $\overline{\varphi(\Theta)}$ toric where the toric structure is observable from a combinatorial row reduction of the coordinate functions of $\varphi$.
Solving Problem~\ref{prob: stratified toric DAGs} for the colored DAG models 
$\mathcal F = \{\mathcal M(\GG,c) = \varphi_{\GG,c}(\Theta) : (\GG,c) \textrm{ a colored DAG}\}$
would be of interest to algebraic statistics.  
The same problem is also interesting to study for other families of semialgebraic sets; such as the matroid flat varieties or degenerate subvarieties of secants in Section~\ref{section:prelim}. 
For group-based phylogenetic models, this approach to proving toricity of $\mathcal V_\varphi$ gives a simple, purely combinatorial way of recovering well-known results. 
This is demonstrated in the following example.

\begin{example}[Toric phylogenetic varieties]
    \label{ex: iterated jukes cantor}
    Recall the 4-leaf Jukes-Cantor model from Section~\ref{subsec: phylogenetics}. 
    The vanishing ideal $\mathcal V_{\varphi_{JC}}$ of this model is known to be toric by classic results in group-based phylogenetic models, where toric structure is classically revealed via the discrete Fourier transform.
    However, in the initial poset $P_{\varphi_{JC}}^{(0)}$, eight of the residue vectors are not standard basis vectors. 
    On the other hand, the poset $P_{\varphi_{JC}}^{(1)}$, depicted in Figure~\ref{fig: P1 phylo}, consists of $36$ elements whose residue vectors are all scalar multiples of standard basis vectors.
    Thus, a single iteration fully diagonalizes the system. This provides a purely combinatorial proof that the Zariski closure of the model is a toric variety, without the need for the discrete Fourier transform.
\end{example}

\begin{figure}[t]
    \centering
    \includegraphics[width=0.4\textwidth]{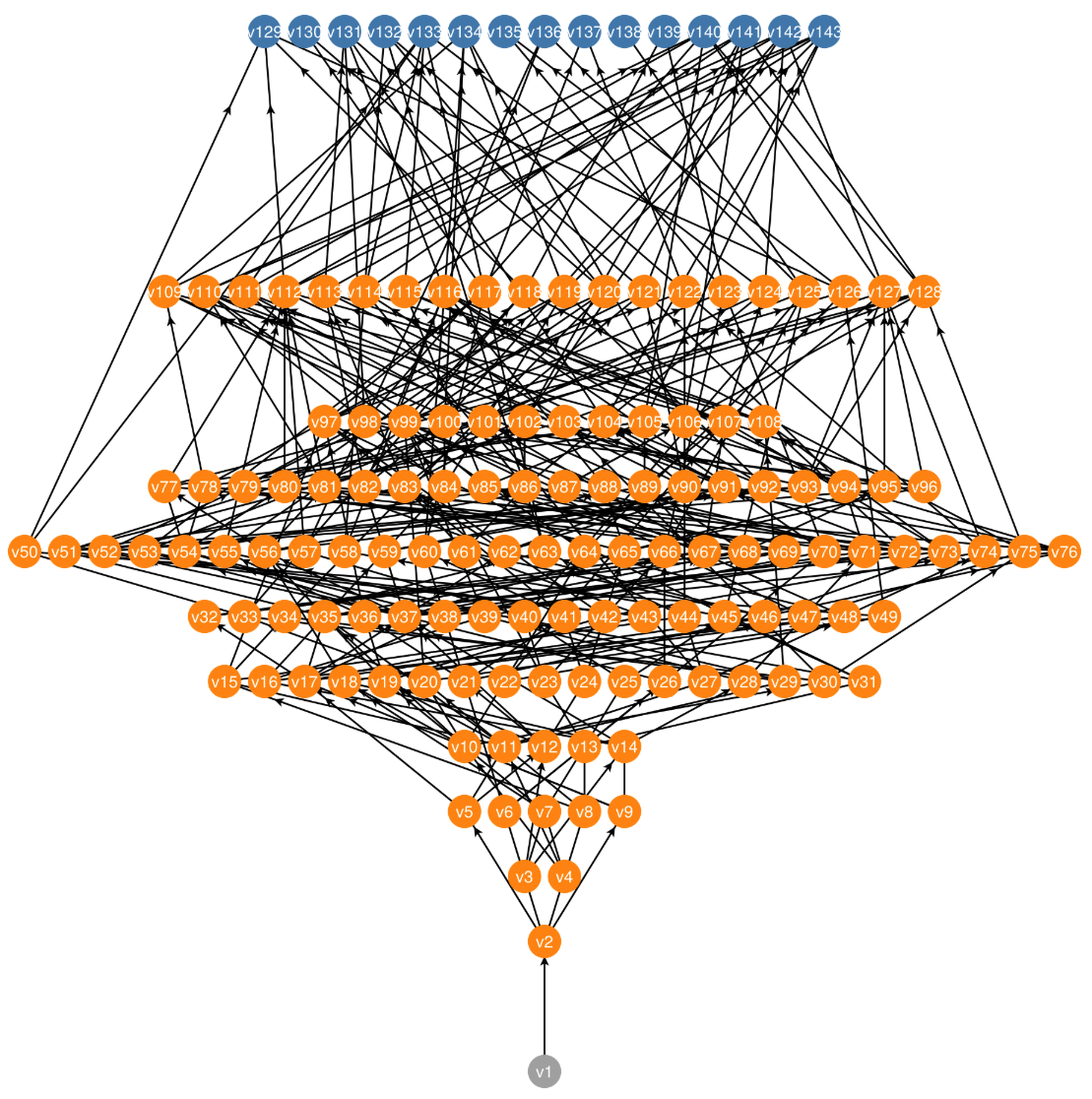}
    \includegraphics[width=0.4\textwidth]{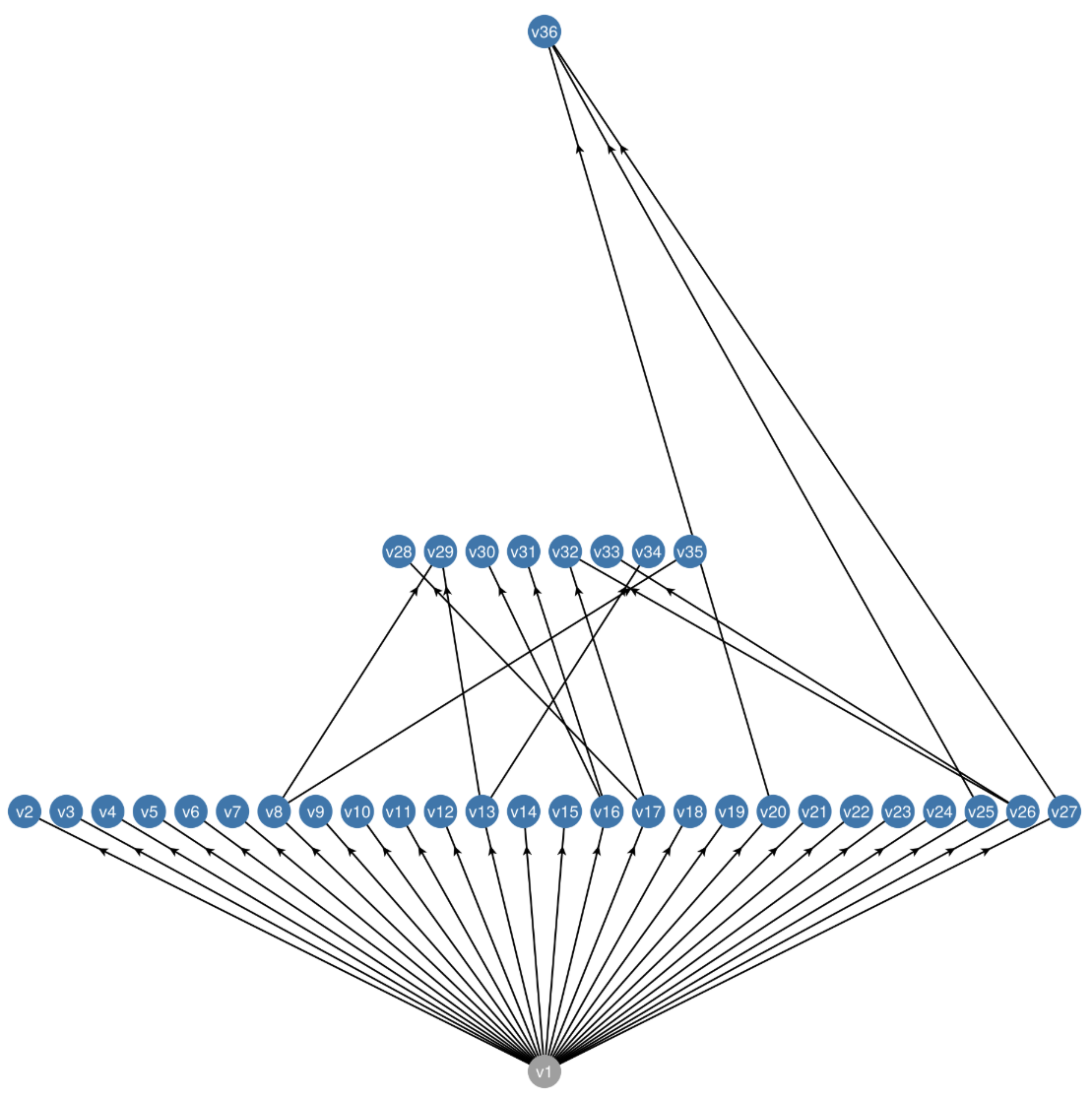}
    \caption{Posets $P_{\varphi_{JC}}^{(0)}$ (left) and $P_{\varphi_{JC}}^{(1)}$ (right) for the 4-leaf tree under the Jukes Cantor model. Nodes in blue represent the coefficient vectors of $\varphi$ while in orange are the meet elements. 
    } 
    \label{fig: P1 phylo}
\end{figure}

\subsection{Applications to implicitization}
\label{rem: toric implicitization}
    We end by describing how the poset $P_\varphi$ allows for complete solutions to the implicitization problem (Problem~\ref{prob: colored implicitization})  when $P_\varphi$ fulfills the conditions of Theorem~\ref{thm: lin independence} and Theorem~\ref{thm: toric condition}. 
    
    Given a generating set for $\ker(\phi^\ast_{\mathcal{T}})$, a solution to the implicitization problem for $I_{\varphi}$ fulfilling the conditions in Theorem~\ref{thm: toric condition} may be obtained by tracing the generators through the linear map $M_{\hat\varphi}$ and eliminating. Note that when $P_{\varphi}$ is a $\pi$-system the elimination step is not required.
    
    \begin{example}
    Consider the ideal variety $\mathcal V_{\varphi_{M_G, \mathfrak F'}}$ for a graphic matroid $M_G$ and $\mathfrak F'=\langle F_1,\ldots, F_s\rangle$, and let $H = (V,E')$ be the subgraph of $G$ with edges $E' = \bigcup_{i=1}^sF_s$.  
    Then, by Example~\ref{ex: toric graphic matroids}, we have that
    \[
    I_{\varphi_{M_G,\mathfrak F'}} = \langle \sum_{F'\preceq F \textrm{ in } \mathfrak F'}\mu(F',F)x_{F'}\rangle + \left\langle \prod_{k=1}^qx_{i_{2k-1}} - \prod_{k=1}^qx_{i_{2k}} : (e_1,\ldots, e_{i_{2q}}) \textrm{ a closed walk in } H\right\rangle,
    \]
    where the nonlinear generators are the generators of the edge ideal of $H$ computed in \cite[Chapter 5.3]{herzog2018binomial}. 
    \end{example}

    In general, when the variety is toric according to Theorem~\ref{thm: toric condition}, the poset reveals the toric reparametrization, which is the linear transformation given by Möbius inversion.  

    \begin{example}
    The matroid flat variety $\mathcal V_{\varphi_{M_G, \mathfrak F'}}$ for the graphic matroid $M_G$ of the $4$-cycle and $\mathfrak F'$ in~\eqref{eqn: matroid flat variety toric} lives in the linear space defined by the linear $P_{\varphi_{M_G,\mathfrak F'}}$-invariant 
    \[
    x_{12,34} - x_{12} - x_{34}
    \]
    given by the single element $\{12,34\}$ with residue vector $0$ in the poset in Figure~\ref{fig: subposet graphic matroid}. It has toric reparametrization
    \[
    \begin{split}
        \varphi'(\theta):\theta \mapsto (t_{12} :=\theta_1\theta_2 &= x_{12},\\
          t_{34} :=\theta_3\theta_4 &= x_{34},\\
          t_{24} :=\theta_2\theta_4 &= \mu(\{12, 24\},\{12, 24\})x_{12,24} +  \mu(\{12\},\{12, 24\})x_{12} = x_{12, 24} - x_{12},\\
          t_{13} :=\theta_1\theta_3 &= \mu(\{13, 34\},\{13, 34\})x_{13,34} +  \mu(\{34\},\{13, 34\})x_{34} = x_{13, 34} - x_{34}),
    \end{split}
    \]
    given by the elements with nonzero residue vectors.   
    Since $I_{\varphi'} =\langle t_{12}t_{34} - t_{13}t_{24}\rangle$, we obtain a complete solution to the implicitization problem for $\mathcal V_{\varphi_{M_G, \mathfrak F'}}$: 
    \[
    I_{\varphi_{M_G,\mathfrak F'}} = \langle x_{12,34} - x_{12} - x_{34}\rangle + \langle x_{12}x_{34} - (x_{13,34} - x_{34})(x_{12, 24} - x_{12})\rangle. 
    \]
    \end{example}

    More generally, for a family of semialgebraic sets $\mathcal F= \{\varphi_i(\Theta_i): \Theta_i\in \mathbb R^{n_i}, \varphi: \Theta_i\to \mathbb R^{m_i}, i\in\mathbb{Z}_{\geq0}\}$, if the posets $P_{\varphi_i}$ are understood and a basis of the toric ideals $I_{\phi_{\mathcal T_i}}$ are known, then the techniques in this paper recover complete solutions to the implicitization problem.  
    This reduces problems in algebraic geometry to poset and lattice point combinatorics (for understanding $P_{\varphi_i}$ and $I_{\phi_{\mathcal T_i}}$, respectively).
    
    In a follow-up paper, we demonstrate this approach when $\mathcal F$ is a subfamily of colored DAG models. 
    In this case, these techniques allow us to completely solve all five problems~\ref{prob: colored implicitization},~\ref{prob: toric},~\ref{prob: linear span},~\ref{prob: colored distinguishability},~\ref{prob: linear equivalence} described in the introduction.

\subsection*{Data availability statement}
All computational examples presented in this paper can be verified using the Julia code available at our GitHub repository: \url{https://github.com/marinagarrote/poset-parametrizations}.

\subsection*{Acknowledgements}
M.~G-L was partially funded by the Beatriu de Pinós postdoctoral programme of the Department of Research and Universities of the Generalitat de Catalunya (ref. 2024BP00235).
N.~K. was partially supported by the research visit grant awarded by the Foundation for Aalto University Science and Technology.
L.~S. was partially supported by the 3-year Prize for Young Researchers awarded by the G\"oran Gustafsson Foundation, a Mercator Fellowship funded by the DFG priority programme SPP 2458 \emph{Combinatorial Synergies}, and the KTH Center for Digital Futures.
L.~S. and M.~G-L. were partially supported by the Wallenberg AI, Autonomous Systems and Software Program (WASP) funded by the Knut and Alice Wallenberg Foundation.

\bibliographystyle{plainnat}
\bibliography{main}

\end{document}